\documentclass[11pt]{article}
\usepackage{latexsym}
\usepackage{comment}
\usepackage{amssymb}
\usepackage{amsthm}
\usepackage{graphicx}
\usepackage{float}
\usepackage{subcaption}  
\newsavebox{\paretobox}

\usepackage{caption}
\usepackage{color}
\usepackage{hyperref}
\usepackage{amsmath}
\usepackage[normalem]{ulem}
\usepackage{cancel}   
\newtheorem{Theorem}{Theorem}[section]
\newtheorem{Proposition}{Proposition}[section]
\newtheorem{Lemma}{Lemma}[section]
\newtheorem{Corollary}{Corollary}[section]

\newtheorem{Assumption}{Assumption}[section]
\newtheorem{Remark}{Remark}[section]

\def \Sup{\displaystyle\sup}

\def \R{\mathbb{R}}
\def \E{\mathbb{E}}
\def \F{\mathbb{F}}
\def \P{\mathbb{P}}
\def \Ac{{\cal A}}
\def \Cc{{\cal C}}
\def \Dc{{\cal D}}
\def \Fc{{\cal F}}
\def \Hc{{\cal H}}
\def \Kc{{\cal K}}
\def \Lc{{\cal L}}

\def \Sup{\displaystyle\sup}

\def \R{\mathbb{R}}

\def \E{\mathbb{E}}
\def \F{\mathbb{F}}
\def \P{\mathbb{P}}

\def\1{\mbox{1\hspace{-0.25em}l}}

\def\beqs{\begin{eqnarray*}}
\def\enqs{\end{eqnarray*}}
\def\beq{\begin{eqnarray}}
\def\enq{\end{eqnarray}}

\title{Tractable bank capital structure: optimal control under Basel III constraints}

\begin{document}

\author{Erhan Bayraktar\footnote{Department of Mathematics, University of Michigan, USA, \sf erhan@umich.edu. Supported in part by the National Science Foundation and the Susan M. Smith Chair.}
\and  Etienne Chevalier\footnote{LaMME, Universit\'e d'\'Evry,
France, \sf etienne.chevalier@univ-evry.fr.} \and Vathana Ly
Vath\footnote{LaMME, ENSIIE, France, \sf lyvath@ensiie.fr.}\and Yuqiong Wang\footnote{Department of Mathematics, University of Michigan, USA, \sf yuqw@umich.edu.} }
\maketitle
\begin{abstract}
Banks must optimize risky investments, dividend payouts, and capital structure under tight Basel III solvency and liquidity constraints, while costly equity issuance serves as a distress-recovery tool. We formulate this as a stochastic control problem that reduces the high-dimensional balance-sheet dynamics to a tractable one-dimensional process in the asset-to-deposit ratio, with state-dependent investment limits. The resulting policy is simple and interpretable: pay dividends at an upper reflection barrier and, when needed, recapitalize only at the distress boundary, jumping to an optimal target level. We characterize these thresholds analytically and show their sensitivity to regulatory parameters. From a regulatory viewpoint, we use Monte Carlo simulation to solve an outer optimization problem and map the efficient frontier between shareholder value and survival probability, both with and without a leverage cap. In the illustrative parameter ranges studied here, tightening solvency requirements often yields the best safety--profitability trade-off.
\end{abstract}
\noindent {\bf Keywords~}: bank capital optimization, Basel III, optimal dividends, impulse and singular control, regulatory constraints, asset-to-deposit ratio, profitability--safety frontier.

\section{Introduction}

Banks continuously balance three competing objectives: paying dividends to shareholders, investing in risky assets to generate returns, and maintaining sufficient capital and liquidity buffers to satisfy regulatory requirements. Dividends are attractive to shareholders but reduce equity; risky investments can raise profitability but also increase the probability of distress; recapitalization through new equity issuance can restore solvency, but it is typically costly because of flotation costs, dilution, and market frictions. These trade-offs are amplified by regulatory requirements, such as Basel III's Tier 1 capital ratio and Liquidity Coverage Ratio (LCR), which impose hard constraints on leverage and liquid-asset holdings.

In this paper, we develop a tractable continuous-time model of a bank's optimal capital structure, dividend policy, and investment strategy under these two regulatory constraints. The bank collects deposits (paying interest at rate $r_L$) and invests in risk-free and risky assets financed by deposits and shareholders' equity. The manager controls the fraction $\pi_t$ invested in the risky asset, cumulative dividends, and the timing and size of equity issuance, which carries proportional costs $\kappa$ and $\kappa'$. Solvency and LCR requirements translate into a state-dependent upper bound $\overline{\pi}(y)$ on risky exposure, where $y = X/L$ is the asset-to-deposit ratio (total assets divided by deposits). The bank's objective is to maximize shareholder value---expected discounted dividends net of issuance costs. These regulatory constraints actively shape the bank's feasible strategies. As capital or liquidity buffers deteriorate, the bank may be forced to reduce risk taking, retain earnings, recapitalize at a cost, or liquidate. Issuance costs create a tension between immediate costs and the value of avoiding distress.

The mechanism studied here is complementary to, but distinct from, classical bank-run models. In the Diamond--Dybvig tradition, fragility arises because short-term liabilities finance illiquid long-term assets and depositors may run in a self-fulfilling equilibrium. We do not model strategic withdrawals or coordination among depositors. Instead, the runoff parameter in the LCR constraint is a reduced-form regulatory stress input, and distress is driven by equity depletion together with the investment restrictions imposed by solvency and liquidity regulation. This distinction is important for interpreting the results: the model is designed to isolate how ratio-based regulation changes a single bank's private payout, investment, and recapitalization incentives, not to provide a full theory of liquidity runs.

In terms of the asset-to-deposit ratio $y$, $y=1$ corresponds to the exhaustion of shareholders' equity. Rather than using the zero-equity level $y=1$ as the intervention boundary, we introduce an internal distress threshold $\underline y>1$, at which the bank chooses either to recapitalize or to liquidate. Introducing this strictly positive capital buffer is important for both the economic interpretation and the mathematical structure of the model. First, with purely proportional issuance costs, intervention at $Y=1$ would remove any positive intervention wedge associated with recapitalization and bring the lower-boundary control toward the proportional-cost singular capital-injection benchmark studied in the dividend and equity-issuance literature; see, for example, \cite{LokkaZervos2008}. A sequence of impulse interventions could therefore degenerate into a singular recapitalization policy. By taking $\underline y>1$, every recapitalization at the intervention boundary has a strictly positive minimum size, preserving the genuine impulse-control nature of equity issuance. Second, at $y=1$, the regulatory constraint forces risky investment to zero and the diffusion coefficient of the ratio process vanishes. The local drift then reduces to $r-r_L$. Consequently, when $r>r_L$, the zero-equity boundary is locally repelling. Introducing $\underline y>1$ provides an operative intervention boundary without requiring a standing restriction on the relative magnitudes of $r$ and $r_L$.

The threshold also has a practical going-concern interpretation. Banks may raise equity while they remain solvent to preserve a capital cushion rather than waiting until shareholders’ equity is exhausted. For example, in discussing JPMorgan’s acquisition of Washington Mutual during the financial crisis, Jamie Dimon described raising an additional $\$11$ billion of equity that he did not regard as immediately necessary, in order to maintain the strength of the bank’s balance sheet if conditions deteriorated further \cite{acquiredDimon2025}. Dimon has also argued that a troubled bank should be required to raise capital substantially before failure \cite{dimon2017}. More recently, he proposed a sequence of capital thresholds under which restrictions on distributions and dividend cuts precede mandatory capital raising \cite{dimon2026}. These observations support interpreting $\underline y>1$ as a precautionary, going-concern recapitalization threshold distinct from the zero-equity bankruptcy boundary.

Finally, $\underline y$ plays a different role from the regulatory parameters $a_1$, $a_2$, and $a_3$. The latter continuously restrict the bank's admissible risky investment, whereas $\underline y$ determines when recapitalization or liquidation occurs. This distinction motivates the delegation analysis in Section~\ref{sec_delegation}, where the regulator sets the external requirements while the bank is allowed to choose its own intervention threshold. It also motivates further study of regulator--bank interactions of the Stackelberg type.

Dynamic models of bank capital structure with costly recapitalization form a growing literature. Delayed equity issuance is studied in \cite{Peukep06}. The effect of capital regulation on banks' portfolio choices is studied in \cite{rochet}, while \cite{HM} and \cite{bolton} examine how liquidity and leverage rules jointly shape capital structure, default risk, and refinancing. In particular, \cite{bolton} studies dynamic bank balance-sheet management under leverage regulation, and its Internet Appendix \cite{boltonIA} considers a liquidity requirement motivated by the LCR. Liquidity is often treated as an exogenous buffer \cite{diamond} or through runoff and haircut assumptions \cite{bala,cetina,doer}; \cite{doer} argues that although the LCR reduces reliance on short-term funding, its optimal design remains an open question. Early work on singular dividend control includes \cite{chotakzho03} and \cite{jeashi95}, while combined singular-impulse and switching problems are studied in \cite{soto11} and related work. For a broader treatment of continuous-time models in corporate finance, banking, and insurance, see \cite{morenoRochet2018}.

From a mathematical perspective, the problem is a combined singular-impulse control problem for diffusions: singular control for dividends (reflection barriers, \cite{harrison1}; \cite{jeashi95}) and impulse control for recapitalization (jump structure, \cite{harrison2,orm}; \cite{Alvarez2008}). L{\o}kka and Zervos \cite{LokkaZervos2008} study optimal dividends and equity issuance under proportional issuance costs. A related corporate finance model is studied in \cite{decamps2}. More broadly, \cite{bcw2011} show that, with costly external financing, optimal payout and external financing policies are governed by endogenous boundary policies; extensions incorporating investment/growth appear in \cite{decvil05}, \cite{lyphvi07}, \cite{guotom08}, and others. We build on this threshold-policy logic rather than claiming that it is new per se. Our contribution is to prove that this structure persists in a regulated banking model in which solvency and liquidity requirements jointly create a state-dependent, kinked investment constraint, and to use the resulting tractability for regulatory comparative statics.

First, we show how Tier 1 solvency and LCR requirements with haircuts and runoff can be represented as a single state-dependent upper bound $\overline{\pi}(y)$ on risky investment. This formulation turns the regulatory environment into an explicit constraint on the bank's investment opportunity set. Exploiting homogeneity, the two-dimensional $(X,L)$ problem reduces to a one-dimensional singular-impulse problem in the asset-to-deposit ratio $Y_t=X_t/L_t$, making the regulated control problem analytically tractable. The state space is $[\underline y,\infty)$ with $\underline y>1$. This choice also avoids degeneration into an infinitesimal singular control problem and avoids the difficulties associated with a locally repelling boundary when $r>r_L$.

Second, while boundary policies for payout and external financing are familiar from dynamic corporate finance, it is not immediate that this structure persists when the bank's investment set is state-dependent and kinked by regulation. We establish the viscosity characterization, prove concavity of the value function, and derive the resulting dividend and recapitalization thresholds. Theorem \ref{optimal_strategy} is therefore best interpreted as a robustness and extension result: the classical threshold structure survives in a Basel-style environment with dual state-dependent regulatory constraints.

Third, we use the analytical characterization to study regulatory design. We explicitly characterize $y^*$ and the post-issuance target, trace sensitivities to regulatory parameters $a_1$ (solvency), $a_2$ (runoff), and $a_3$ (haircut), and quantify the value of the recapitalization option. We then formulate and solve a regulator's problem---maximizing bank value subject to a survival-probability constraint over a finite horizon---with and without a no-leverage restriction ($\pi \leq 1$). Monte Carlo evaluation of the resulting Pareto frontier shows that $a_1$ dominates in both regimes, suggesting that tightening capital requirements is often the most efficient safety tool within the parameter ranges studied; see Section \ref{regulator_optimization}. We then allow the bank to choose its own intervention threshold after the external regulatory requirements have been fixed. The bank generally prefers a higher trigger than a coordinated benchmark designed to limit the number of recapitalizations. The associated shareholder-value gain from this delegation is small, but the change in recapitalization behavior can be large: under fixed regulation, a higher trigger generates more frequent, smaller capital injections and greater total issuance. Tighter solvency regulation can reduce both intervention frequency and total issuance even after the bank optimally chooses its trigger. Finally, we show that this optimization depends strongly on recapitalization costs: when external equity is inexpensive, the bank prefers higher intervention thresholds, whereas higher issuance costs $\kappa'$ push the preferred trigger downward and sharply reduce reliance on capital issuance.

Managerially, the analysis turns the continuous-time control solution into operating triggers that can be interpreted by bank managers and regulators: a capitalization level above which excess funds are paid out, a distress boundary at which recapitalization becomes valuable, a post-issuance target, and a regulatory switching point at which the binding constraint changes from solvency to liquidity or vice versa. The economic content of the model is therefore not that threshold policies are new, but that Basel-style constraints move these thresholds by reshaping the bank's feasible risk--return trade-off at each capitalization level. In particular, the model predicts that tightening solvency regulation tends to raise the retained-capital buffer before dividends are paid, while liquidity regulation matters most in the region where the LCR component of $\overline{\pi}(y)$ binds. The no-leverage case further clarifies when liquidity rules may become locally nonbinding because an additional market or balance-sheet restriction already limits risky investment. The delegation analysis adds a separate operational implication. The solvency requirement remains the main regulatory instrument, but the intervention threshold determines how aggressively the bank relies on recapitalization. When external equity is inexpensive, the bank prefers a higher intervention threshold and recapitalizes more frequently. Consequently, delegating intervention thresholds can matter even though its effect on shareholder value is small. The motivation for restricting the intervention threshold is stronger when the regulator places direct weight on the number of issuances and the bank faces low recapitalization costs.

The regulatory exercise should be interpreted as a single-bank constrained-efficiency calculation rather than a full welfare analysis. Because the model has one representative bank, it does not internalize systemic externalities such as correlated exposure to regulatory boundaries, fire-sale spillovers, deposit-insurance costs, or the possibility that many banks simultaneously reduce dividends, sell risky assets, or attempt to recapitalize. These aggregate channels are important in practice and remain outside the scope of the present model.

The paper proceeds as follows. Section \ref{sec2} describes the balance-sheet dynamics, constraints, and one-dimensional reduction. In Section \ref{sec3}, we first prove the viscosity characterization and concavity of the value function, which we then use to derive the explicit thresholds for issuance and dividends. Section \ref{sec4} presents numerical results on value and policy sensitivities, the regulator's safety--profitability frontier, and the interaction between solvency regulation, delegated intervention thresholds, and recapitalization costs. Proofs appear in Appendix \ref{appendix}.

\section{Balance-sheet dynamics, regulatory constraints, and the bank's optimization problem}\label{sec2}
Let $(\Omega, \F, \P)$ be a probability space equipped with a
filtration $\F= (\Fc_t)_{t \geq 0}$ satisfying the usual
conditions. All random variables and stochastic
processes are defined on this probability space.
Let $W$ and $B$ be two correlated $\F$-Brownian motions with
correlation coefficient $c$ where $|c|<1$; that is, $d[W,B]_t=c\,dt$ for all $t$. We consider a bank whose liabilities consist of customer deposits, denoted by $L_t$ at time $t$. We
model the process $L$ using the following
stochastic differential equation:
\beq dL_t = L_t
\left(\mu_{_L} dt+\sigma_{_L}dW_t\right), \: L_{0}  =  \ell, \enq
where $\sigma_{_L}$ is a positive constant and
$\mu_{_L}:=\gamma+r_{_L}$. Here, $\gamma\in\mathbb{R}$ is the
exogenous growth rate of deposits and $r_{_L}\geq 0$ is the interest
rate paid by the bank to its clients. The bank may invest in a
risk-free asset with a constant interest rate $r>0$ or in a risky
asset whose value process $S$ satisfies the following
stochastic differential equation:
\beq dS_t=S_t\left(\mu dt+\sigma dB_t\right),\: S_0 = s, \enq
where $\mu\in\mathbb{R}$, $\sigma>0$.
The diffusion specification is a deliberately parsimonious continuous-time approximation. Brownian shocks represent frequent aggregate fluctuations in asset returns and deposit growth and make it possible to isolate the effect of the regulatory constraints on the bank's controls. The specification does not capture jumps, fire sales, abrupt market-access failures, or strategic withdrawals; incorporating those features would require additional state variables or jump terms and is outside the scope of the present analysis.
Let $X_t$ denote the bank's total assets at time $t$, and let $\pi_t$ denote the fraction invested in
the risky asset. Accordingly, $(1-\pi_t)X_t$ is invested in the risk-free asset, and $\pi_t X_t$ is invested in the risky asset at time
$t$. By the balance-sheet identity, we have
$$X_t=F_t+L_t\quad\forall t\geq0,$$
where $F_t$ denotes shareholders' equity at time $t$. The bank's manager controls both the allocation of assets between the risk-free and risky assets and the bank's capital
through equity issuance and dividend payments. We
consider a control strategy
\[
{\widehat{\alpha}} =
((\tau_n)_{n\in\mathbb{N}^*},(\widehat{\xi}_n)_{n\in\mathbb{N}^*},\widehat{Z},\pi),
\]
where the $\F$-adapted c\'adl\'ag nondecreasing process
$\widehat{Z}$ represents the total amount of dividends distributed,
with $\widehat{Z}_{0^-} = 0$. The strictly increasing sequence of
stopping times $(\tau_n)_{n\in\mathbb{N}^*}$ represents the times at which
the manager decides to issue new capital. The $\Fc_{\tau_n^-}$-measurable random variable $\widehat{\xi}_n\in(0,+\infty)$ represents
the amount of capital issued at $\tau_n$. The process $\pi$ represents the
fraction of the bank's assets invested in the risky asset. The equity process associated with a control ${\widehat{\alpha}}$ then evolves as follows:
$$\left\lbrace
\begin{array}{lllr}
dF_t & = & -r_{_L}
L_tdt-d\widehat{Z}_t+(1-\pi_t)X_trdt+\pi_tX_t\left(\mu dt+\sigma
dB_t\right) & \textrm{ for }\tau_i<t<\tau_{i+1}\\
F_{\tau_i} & = &
(1-\kappa)F_{\tau^-_i}+(1-\kappa^\prime)\widehat{\xi}_i &
\end{array} \right.$$
Here, $\kappa^\prime,\kappa>0$ are issuance-cost parameters. When
issuing capital at time $t$, the manager pays a cost
proportional to the amount issued, while $\kappa F_{t^-}$ is the
cost of compensating existing shareholders for dilution. We also assume
that $\widehat{\xi}_i$ is large enough to ensure that
$F_{\tau_i}>F_{\tau_i^-}$, i.e.,
$\widehat{\xi}_i>\frac{\kappa}{1-\kappa^\prime}F_{\tau^-_i}$.
Otherwise, the manager would be better off avoiding issuance and distributing dividends instead. The corresponding wealth process $X$ then solves
\beq\left\lbrace
\begin{array}{lrl}
dX_t & = & \left((1-\pi_t)r X_t+\pi_t\mu X_t +\gamma
L_t\right)dt+\pi_t\sigma X_tdB_t+\sigma_L L_tdW_t-d\widehat{Z}_t,\\
&& 
\textrm{ for }\tau_i<t<\tau_{i+1}, \\
X_{\tau_i} & = &
X_{\tau^-_i}+(1-\kappa^\prime)\widehat{\xi}_i-\kappa F_{\tau^-_i}
\end{array}
\right.\enq
The accounting insolvency level corresponds to $X_t=L_t$, or equivalently $Y_t=1$, at which shareholders' equity is exhausted. We do not use this level as the operative intervention boundary. Instead, we fix a supervisory distress threshold $\underline y>1$. Whenever the pre-intervention state reaches $\underline y$, the bank must either recapitalize immediately or liquidate. If recapitalization is chosen, the capital injection is executed first and the bank continues from the resulting post-issuance state above $\underline y$. Otherwise, liquidation occurs and shareholders receive the residual value $X_t-L_t=L_t(\underline y-1)$. Accordingly, we define the liquidation time by $T^{\widehat\alpha}:=\inf\left\{t\geq0:X_{t^-}\leq \underline y L_t\ \text{and}\  t\notin\{\tau_n\}_{n\geq 1}\right\}$. Allowing recapitalization at this boundary assumes that supervisory intervention occurs while an equity issue remains feasible, for example through a prearranged rights issue or a regulator-facilitated recapitalization. This is a modeling assumption rather than a claim that distressed banks always retain market access. Fixed issuance costs, endogenous access to equity markets, or adverse-selection costs that increase near distress could instead make liquidation optimal or induce earlier issuance.
We fix a constant shareholder discount rate $\rho>0$. The shareholders receive cumulative dividends $\widehat{Z}$ until bankruptcy. At an equity issuance time $\tau_n$, a total amount of $(1-\kappa')\widehat\xi_n$ is added to equity, while the dilution cost $\kappa F_{\tau_n^-}$ is deducted at issuance. Accordingly, the net issuance cash flow to shareholders at $\tau_n$ is $\widehat{\xi}_n-\kappa
F_{\tau_n^-}$. Given initial liability $\ell>0$ and initial wealth
$x>0$, the shareholders' present value under the policy
${\widehat{\alpha}}$ is defined by
$$J^{{\widehat{\alpha}}}(\ell,x)=\mathbb{E}_{\ell,x}\Big[\int_0^{T^{\widehat{\alpha}}}e^{-\rho
t}d\widehat{Z}_t-\sum_{n\geq 1}e^{-\rho\tau_n}(\widehat{\xi}_n-\kappa
F_{\tau_n^-})\1_{\lbrace \tau_n\le
T^{\widehat{\alpha}}\rbrace} + e^{-\rho T^{\widehat \alpha}}(X_{T^{\widehat \alpha}}-L_{T^{\widehat \alpha}})\Big].$$
We now introduce the regulatory constraints that reflect the institutional features of banking. The
first is a solvency requirement that
measures the bank's ability to absorb losses without default. In our setting, the solvency ratio is the ratio of shareholders' equity to risky asset holdings:
$\frac{F_t}{\pi_t X_t}\geq a_1 $
for some $a_1\in(0,1)$.
Equivalently, by introducing the asset-to-deposit ratio $Y_t:=\frac{X_t}{L_t}$, this condition can be rewritten as
$$1-\frac{1}{Y_t}\geq a_1\pi_t.$$
The second constraint is the Liquidity Coverage Ratio (LCR), defined as the
ratio of high-quality liquid assets (HQLA) to cash outflows
over 30 days. We focus on the potential runoff of retail deposits and model the 30-day net outflow as a fixed fraction $a_2\in(0,1)$ of liabilities, i.e., $a_2L_t$. We assume that risk-free assets qualify fully as HQLA. In addition, we allow risky assets to contribute to HQLA only after applying the regulatory haircut prescribed by the Basel III LCR framework \cite{baco13}. In other words, a fraction $a_3\in (0,1)$ is excluded from HQLA. Hence, when the bank invests a fraction $\pi_t$ in risky assets, the HQLA level is
$(1-\pi_t )X_t+(1-a_3)\pi_t X_t = X_t -a_3 \pi_t X_t$. 
The liquidity constraint can therefore be expressed as $\frac{X_t - a_3\pi_t X_t }{a_2 L_t} \geq 1\textrm{ for all }t\geq
0,$
or in terms of the asset-to-deposit ratio $Y$,
$$1 - \frac{a_2}{Y_t} \geq a_3\pi_t.$$
This motivates the function $\overline{\pi}$, defined on
$\lbrack 1,+\infty)$ by
\begin{equation}
\label{barpi}
    \overline{\pi}(y)=\min\left(\frac{1}{a_1}(1-\frac{1}{y}),\ \frac{1}{a_3}(1-\frac{a_2}{y})\right).
\end{equation}
Economically, $\overline{\pi}(y)$ links regulation directly to the bank's investment opportunity set. When the bank is close to the distress boundary $y=\underline y$, the solvency constraint forces risky investment to be small, so risk taking is restricted precisely when the equity buffer is thin. As $y$ increases, the bank can take more risk, but the binding regulatory constraint may switch between the solvency and liquidity requirements. This kinked investment cap is the main channel through which Basel-style regulation affects the bank's payout and recapitalization decisions. We distinguish the accounting constraints from the intervention trigger at which normal operations end. The regulatory parameters $a_1,a_2,a_3$ determine the admissible risky-investment set through $\overline{\pi}(y)$. Separately, the distress trigger $\underline{y}$ marks the point at which the bank must either recapitalize or liquidate, before its equity is fully exhausted.

The function $\overline \pi $ gives the regulatory upper bound on the proportion of assets invested in the risky asset. We require the intervention times to satisfy $\tau_n<\tau_{n+1}$ and $\tau_n\uparrow\infty$. The set of admissible controls, denoted by $\widehat{\Ac}$, is defined by{\small$$\widehat{\mathcal{A}}=\{{\widehat{\alpha}} =
((\tau_n)_{n\in\mathbb{N}^*},(\widehat{\xi}_n)_{n\in\mathbb{N}^*},\widehat{Z},
\pi): \forall 0 \leq t \leq T^{\widehat{\alpha}}, 0\leq
\pi_t\leq\overline\pi(\frac{X_t}{L_t}),\ \forall n\geq 1:
\widehat{\xi}_n>\frac{\kappa}{1-\kappa^\prime}F_{\tau_n^-}\}.$$}
We call a control admissible if in addition, $\pi$ is progressively measurable, $\widehat Z$ is adapted, nondecreasing and c\`adl\`ag with $\widehat Z_{0^-}=0$, and dividend jumps before liquidation leave the next state in the state space. We ask the issuance sizes are positive and $\mathcal{F}_{\tau_n^-}$-measurable. The control satisfy $0\leq \pi_t\leq \overline \pi(Y_t)$ prior to liquidation. Moreover, away from issuance times, the dividend jumps satisfy $\Delta \widehat Z_t\leq X_{t^-}-\underline yL_t$, so the dividends do not move the state below $\underline y$. We further assume all controls are stopped at liquidation, and the discounted dividend and issuance cash flows to be integrable, and the localized payoff to be uniformly integrable. Our value function is defined by \beq
\widehat{v}(\ell,x)=\sup_{{\widehat{\alpha}}\in\widehat{\mathcal{A}}}J^{\widehat{\alpha}}(\ell,x)\quad\textrm{for
} (\ell,x)\in\mathcal S :=\{(\ell,x)\in\lbrack 0,+\infty)^2: x\geq\underline y
{\ell}\}.\enq
We now state a result that transforms our initial
two-dimensional problem into a one-dimensional control problem in terms of the asset-to-deposit ratio $Y$, which is also a natural state variable from a regulatory perspective.
\begin{Proposition}[Reduction to the asset-to-deposit ratio]
\label{thm1}
Let
\[
\alpha:=((\tau_n)_{n\in\mathbb{N}^*},({\xi}_n)_{n\in\mathbb{N}^*},{Z},\pi),
\]
where $(\tau_n)_{n\in\mathbb{N}^*}$ is an increasing sequence of
stopping times, $({\xi}_n)_{n\in\mathbb{N}^*}$
a sequence of positive $\mathcal{F}_{\tau_n^-}$-measurable random
variables, and $Z$ an increasing process. Let the process
$Y^\alpha$ solve the following stochastic differential
equation:
$$\begin{cases}
    dY^{\alpha}_t& =  \left( Y^\alpha_t\left[\mu(\pi_t)-\mu_L\right]+\gamma\right)dt+\pi_tY^{\alpha}_t\sigma dB_t+\sigma_L(1-Y^{\alpha}_t)dW_t-d{Z}_t,\quad \tau_n<t<\tau_{n+1}\\
Y_{\tau_n}  &=  (1-\kappa)Y_{\tau_n^-}+(1-\kappa^\prime)\xi_n+\kappa,
\end{cases}$$
where $\mu(\pi)=(1-\pi)r+\pi\mu $. Define the stopping time $T^\alpha=\inf\{t\ge 0:\ Y^\alpha_t\leq \underline y, t\notin \{\tau_n\}_{n\geq 1}\}$. Then,
$\widehat{v}(\ell,x)=\ell v(\frac{x}{\ell})$, for all $\ell>0$ and $x\geq \underline y \ell$, where $v: [\underline y,\infty)\to \R$ is given by
\[v(y)=\sup_{{{\alpha}}\in\mathcal{A}}\mathbb{E}_{y}\Big[\int_0^{T^{{\alpha}}}e^{-\rho_L
t}d{Z}_t-\sum_{n\geq 1}e^{-\rho_L\tau_n}({\xi}_n-\kappa(Y^\alpha_{\tau_n^-}-1))\1_{\lbrace
\tau_n\le T^{\alpha}\rbrace}+ e^{-\rho_L T^{\alpha}} (\underline y-1)\Big]\]
with $\rho_L:=\rho-\mu_L$.
\end{Proposition}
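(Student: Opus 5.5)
The plan is a change of numéraire: measure every quantity in units of deposits $L$, and treat $L$ as a change of measure. Concretely, I will build a bijection between admissible controls $\widehat\alpha\in\widehat{\mathcal A}$ for $(\ell,x)$ and admissible controls $\alpha\in\mathcal A$ for $y=x/\ell$. Under this map $J^{\widehat\alpha}(\ell,x)=\ell\,J^{\alpha}(y)$, where $J^\alpha$ is the functional inside the supremum defining $v$. Taking suprema on both sides then gives $\widehat v(\ell,x)=\ell v(x/\ell)$.

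\textbf{Step 1: dynamics of the ratio.} Given $\widehat\alpha$, set $Z$ by $dZ_t=d\widehat Z_t/L_t$, so $Z_t=\int_0^t L_s^{-1}d\widehat Z_s$, with jumps $\Delta Z_t=\Delta\widehat Z_t/L_t$ since $L$ is continuous. Set $\xi_n=\widehat\xi_n/L_{\tau_n}$, which is $\mathcal F_{\tau_n^-}$-measurable, and keep $\tau_n$ and $\pi$ unchanged. The inverse map is obtained by multiplying by $L$. Apply Itô's formula to $Y=X/L$ between intervention times:
\[
d(1/L)=L^{-1}\bigl((-\mu_L+\sigma_L^2)dt-\sigma_L dW\bigr).
\]
The product rule then produces the cross-variation terms $-\pi Y\sigma\sigma_L c\,dt$ and $-\sigma_L^2\,dt$, together with $\sigma_L^2Y\,dt$ and the $\gamma$ term. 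I will verify these terms carefully. The stated SDE contains no $c$-term, so either these terms must cancel, or the statement should be read under the measure $\mathbb Q$ introduced in Step 3. In the latter case $\tilde W=W-\sigma_L t$ and $\tilde B=B-c\sigma_L t$ are $\mathbb Q$-Brownian motions, and the drift correction $-\sigma_L^2 Y+\sigma_L^2+c\sigma\sigma_L\pi Y$ exactly absorbs the cross terms. This matches the displayed SDE driven by $(\tilde B,\tilde W)$ with drift $Y[\mu(\pi)-\mu_L]+\gamma$.

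At issuance times, dividing the jump equation by $L_{\tau_n}$ and using $F=X-L$ gives
\[
Y_{\tau_n}=Y_{\tau_n^-}+(1-\kappa')\xi_n-\kappa(Y_{\tau_n^-}-1),
\]
which is the stated jump rule. The constraints transfer directly. The condition $\pi\le\overline\pi(X/L)$ becomes $\pi\le\overline\pi(Y)$, the threshold $x\ge\underline y\ell$ becomes $y\ge\underline y$, the dividend-jump bound becomes $\Delta Z\le Y_{t^-}-\underline y$, and the issuance-size condition scales to $\xi_n>\frac{\kappa}{1-\kappa'}(Y_{\tau_n^-}-1)$. It follows that $T^{\widehat\alpha}=T^\alpha$.

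\textbf{Step 2: payoff scaling.} Write each cash flow as $L$ times its normalized counterpart: $e^{-\rho t}d\widehat Z_t=e^{-\rho t}L_t\,dZ_t$, the issuance term is $e^{-\rho\tau_n}L_{\tau_n}\bigl(\xi_n-\kappa(Y_{\tau_n^-}-1)\bigr)$, and the terminal term is $e^{-\rho T}L_T(\underline y-1)$. Now note that
\[
e^{-\rho t}L_t=\ell\,e^{-\rho_L t}M_t,\qquad M_t=\exp\bigl(\sigma_L W_t-\tfrac12\sigma_L^2t\bigr),
\]
where $M$ is a positive martingale.

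\textbf{Step 3: change of measure.} Define $d\mathbb Q/d\mathbb P|_{\mathcal F_t}=M_t$. Replace $\mathbb E^{\mathbb P}[M_t\,\cdot\,]$ by $\mathbb E^{\mathbb Q}$, first on $[0,t\wedge T\wedge\tau_N]$ and then pass to the limit. For the dividend integral, use integration by parts (or optional projection) with the martingale $M$ to obtain $\mathbb E^{\mathbb P}\int e^{-\rho_L t}M_t\,dZ_t=\mathbb E^{\mathbb Q}\int e^{-\rho_L t}dZ_t$ on localized horizons. For the issuance and terminal terms, use optional sampling at stopping times. Girsanov's theorem makes $\tilde W,\tilde B$ Brownian motions under $\mathbb Q$ with the same correlation, so the law of $Y$ under $\mathbb Q$ is that of the stated SDE. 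Hence $J^{\widehat\alpha}(\ell,x)=\ell J^\alpha(y)$, and taking suprema over the bijection finishes the proof.

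\textbf{Main obstacle.} The hardest part is the limit passage. $\mathbb Q$ is only locally equivalent to $\mathbb P$ on an infinite horizon, and $T$ may be infinite. I will localize at $t\wedge T\wedge\tau_N$, and then use the assumed uniform integrability of the localized payoff together with monotone convergence for the dividend part to remove the localization. Here $\rho_L>0$, i.e. $\rho>\mu_L$, is implicitly needed, and I would state it as a standing assumption.
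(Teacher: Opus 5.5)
Your proposal is correct and follows essentially the same route as the paper. You normalize the controls and cash flows by $L$ and write $e^{-\rho t}L_t=\ell e^{-\rho_L t}M_t$ with the exponential martingale $M$. You then use integration by parts and optional sampling on a localized horizon to pass to the new measure $\P^*$ (your $\mathbb{Q}$), where the Girsanov shifts $W-\sigma_L t$ and $B-c\sigma_L t$ absorb the cross-variation terms, and you remove the localization using the integrability built into admissibility. Your hedge resolves the second way: the $c$-terms do not cancel under $\P$, and the paper likewise reads the reduced SDE and the expectation defining $v$ under the new measure, identified with the original law via the canonical space.
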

The equivalent equity-to-deposit ratio $\widetilde{Y}_t:=F_t/L_t$ could also be used as the state variable, since $\widetilde{Y}_t=Y_t-1$. We work with $Y_t=X_t/L_t$ because the zero-equity boundary is then $Y_t=1$ and the regulatory bounds in \eqref{barpi} take a compact form. The reduction relies on the homogeneity of the balance sheet and the objective. This is a deliberate modeling restriction. Fixed issuance costs, nonlinear issuance costs, taxes, deposit insurance premia, endogenous deposit rates, balance-sheet size effects, multiple asset classes, or regulatory constraints depending on dollar levels rather than ratios would generally break the one-dimensional reduction. The advantage of the present specification is that it preserves the ratio-based nature of Basel-style solvency and liquidity rules while keeping the singular-impulse control problem analytically tractable.
\section{Characterization of the optimal strategies}\label{sec3}
In this section, we study the value function, establish its analytical properties, and then use them to characterize the optimal strategies.
\subsection{Analytical properties of the value function}
This subsection establishes the analytical properties of the value function, which underpin the structure of the optimal strategies. Its main result is that $v$ is the unique viscosity solution of the following variational inequality:
\begin{equation}
    \label{HJB}
    \begin{cases}
        0 =  \min \{ \rho_L v(y) - \Sup_{0\leq\pi\leq \overline{\pi}(y)}\Lc^\pi v(y); v^\prime(y)- 1; v(y)-\mathcal{H}v(y) \},\\
v(\underline y)  =  \max\left(\underline y-1,\ \mathcal{H}v(\underline y)\right) . 
    \end{cases}  
\end{equation}
Here, the impulse operator $\mathcal{H}$ is defined by
\beqs
\mathcal{H} \varphi (y)& = & \sup_{\xi>\frac{\kappa}{1-\kappa^\prime}(y-1)}\Big[ \varphi((1-\kappa)y+(1-\kappa^\prime)\xi+\kappa)-\xi+\kappa(y-1)\Big],
 \enqs
and the operator $\Lc^\pi$ is defined by 
\beqs \Lc^\pi \varphi (y)&=& \frac{1}{2}\left(\pi^2\sigma^2y^2+2\pi c\sigma\sigma_Ly(1-y)+\sigma_L^2(1-y)^2\right)\varphi^{\prime\prime} +\left( y\left[\mu(\pi)-\mu_L\right]+\gamma\right)\varphi^\prime.\enqs
Here, $\mu(\pi)=(1-\pi)r+\pi\mu$. For convenience, we write the impulse operator in post-issuance coordinates by setting $z= (1-\kappa)y +(1-\kappa')\xi +\kappa$. Then $z>y$ and
\beqs
\mathcal{H} \varphi (y)& = & \sup_{z>y}\Big[ \varphi(z)-\frac{z-y}{1-\kappa'}-\frac{\kappa\kappa'}{1-\kappa'}(y-1)\Big].
 \enqs
We first observe that, for any bounded stopping time $\theta$, the value function $v$ satisfies the following dynamic programming principle (DPP):
\begin{equation}\label{dpp}    
\begin{aligned}
v(y)=&\sup_{{{\alpha}}\in\mathcal{A}}\mathbb{E}_{y}\Big[\int_0^{\theta\wedge T^{\alpha}}e^{-\rho_L
t}d{Z}_t-\sum_{\tau_n\leq \theta\wedge T^{\alpha}}e^{-\rho_L\tau_n}({\xi}_n-\kappa(Y^\alpha_{\tau_n^-}-1))\\
&\qquad\qquad+e^{-\rho_L \theta }v(Y^{\alpha}_{\theta})\1_{\theta <T^{\alpha}}+e^{-\rho_LT^\alpha}(\underline y-1)
\mathbf 1_{\{\theta\geq T^\alpha\}}.\Big].
\end{aligned}
\end{equation}
\subsubsection{Lower and upper bounds for the value function}
We introduce notation used throughout the analysis. Recall that the bank allocates a fraction $\pi_t$ of its assets to the risky asset, and the instantaneous expected return is $\mu(\pi_t) = r+(\mu-r)\pi_t$. Since $\mu$ is affine in $\pi_t$ and the regulatory constraints impose $0\leq \pi_t\leq \overline \pi(y)$, $\mu(\pi)$ is maximized at an endpoint. We define the drift-maximizing strategy by
$\pi^*(y)=\overline{\pi}(y)\1_{\{\mu\geq r\}}$,
and the corresponding maximized instantaneous expected return by 
$\mu^*(y):=r+\pi^*(y)(\mu-r)=\max\{ (1-\pi)r+\pi\mu: 0\leq \pi\leq \overline{\pi}(y)\}$.
We emphasize that $\pi^*$ is not necessarily optimal for the full control problem, since optimality also depends on the correlation and volatility parameters, as well as the issuance and dividend strategies through the variational inequality. We refer to $\pi^*$ as the myopic strategy throughout the rest of the paper.
Recall that the upper bound $\overline{\pi}(y)=\min\left(\frac{1}{a_1}(1-\frac{1}{y});\ \frac{1}{a_3}(1-\frac{a_2}{y})\right)$ increases in $y$, and it converges as $y\to \infty$ to
$\lim_{y\to \infty } \overline{\pi}(y)= \frac{1}{\max(a_1, a_3)} =: \frac{1}{\bar a}$. It follows that 
\begin{align*}
    \sup_{y\geq 1} \mu^*(y ) = \begin{cases}
        r, \quad \mu\leq r,\\
        \frac{\mu +(\bar a-1) r}{\bar a}, \quad \mu> r.
    \end{cases}
\end{align*}
 We now impose a parameter restriction under which discounting dominates the maximal growth permitted by regulation, thereby ensuring that the problem is well posed.
\begin{Assumption}\label{assump1}
Throughout the rest of the paper, we assume that $\rho> \max(\mu_L,r+\frac{(\mu-r)^+}{\bar a})$, where $\bar a = \max(a_1,a_3)$.    
\end{Assumption}
\begin{Proposition}\label{lower_upperbound}
Define $M: = \sup_{z\geq \underline y} \{(\mu^*(z)-\rho)z\}$ and let $K: = \frac{1}{\rho_L}\max\left(-\rho_L,M+\gamma\right)$. Under Assumption \ref{assump1}, $M<\infty$. Then, for all $y\geq \underline y$, the value function satisfies
\begin{equation}\label{bounds}
    y-1\leq v(y)\leq y +K.
\end{equation}  
\end{Proposition}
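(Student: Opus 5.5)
The plan is to prove the lower bound with one explicit admissible strategy and the upper bound with a supersolution (verification) argument for the affine test function $\varphi(y):=y+K$.

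\textbf{Lower bound.} Fix $y\geq\underline y$. Take the strategy that never issues equity and uses $\pi\equiv 0$. At time $0$ it pays the lump-sum dividend $\Delta Z_0=y-\underline y$, which moves the state exactly to $\underline y$. By the definition of $T^\alpha$, liquidation then occurs at $T^\alpha=0$. This control is admissible: the dividend jump does not push the state below $\underline y$, and there are no issuances. Its payoff is $(y-\underline y)+(\underline y-1)=y-1$, so $v(y)\geq y-1$.

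\textbf{Upper bound: $\varphi$ is a supersolution.} First, $M<\infty$ and in fact $M\le 0$. Indeed, Assumption \ref{assump1} gives $\mu^*(z)\leq\sup_{y\geq1}\mu^*(y)<\rho$ for all $z$, so $(\mu^*(z)-\rho)z<0$ on $[\underline y,\infty)$. The assumption also gives $\rho_L=\rho-\mu_L>0$, so $K$ is well defined. We then check the three parts of \eqref{HJB} and the boundary condition.
\begin{itemize}
\item \emph{Continuation part.} Since $\varphi''=0$ and $\varphi'=1$, for $0\le\pi\le\overline\pi(y)$ we have
\[
\rho_L\varphi(y)-\Lc^\pi\varphi(y)=y(\rho-\mu(\pi))+\rho_LK-\gamma\geq -M+\rho_LK-\gamma\geq 0 .
\]
The first inequality uses $\mu(\pi)\le\mu^*(y)$. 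The last uses $\rho_L K\ge M+\gamma$, by the definition of $K$.
\item \emph{Dividend part.} $\varphi'-1=0$.
\item \emph{Impulse part.} For $z>y$ we have $\varphi(z)-\frac{z-y}{1-\kappa'}-\frac{\kappa\kappa'}{1-\kappa'}(y-1)\le \varphi(y)$. This holds because $\frac{z-y}{1-\kappa'}\ge z-y$ and $y\geq 1$. Hence $\mathcal H\varphi\le\varphi$.
\item \emph{Boundary.} $\varphi(\underline y)\ge \underline y-1$ because $K\ge -1$, by the definition of $K$.
\end{itemize}

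\textbf{Verification.} Fix an admissible $\alpha$, a localizing sequence $\theta_R$ for the stochastic integrals, and set $\theta=\theta_R\wedge T^\alpha\wedge R$. Apply Itô's formula with jumps to $e^{-\rho_L t}\varphi(Y_t)$ on $[0,\theta]$.
\begin{itemize}
\item The $dt$-terms are $\le 0$ by the continuation inequality.
\item Continuous and jump dividend contributions give exactly $-\int e^{-\rho_L t}dZ_t$, since $\varphi$ is affine with slope $1$.
\item At an issuance time the jump equals $Y_{\tau_n}-Y_{\tau_n^-}=(1-\kappa')\xi_n-\kappa(Y_{\tau_n^-}-1)$. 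This is at most $\xi_n-\kappa(Y_{\tau_n^-}-1)$.
\end{itemize}
Taking expectations, the martingale part vanishes, and we obtain
\[
\varphi(y)\ge\E_y\Big[\int_0^{\theta}e^{-\rho_L t}dZ_t-\sum_{\tau_n\le\theta}e^{-\rho_L\tau_n}(\xi_n-\kappa(Y_{\tau_n^-}-1))+e^{-\rho_L\theta}\varphi(Y_\theta)\Big].
\]
On $\{\theta=T^\alpha\}$ the state is $Y_{T^\alpha}=\underline y$, and $\varphi(\underline y)\ge\underline y-1$, so the last term dominates the liquidation payoff there.

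\textbf{Passage to the limit (main obstacle).} We let $R\to\infty$. The dividend and issuance sums converge by the integrability imposed in the definition of admissibility. On $\{T^\alpha>\theta\}$, the term $e^{-\rho_L\theta}\varphi(Y_\theta)$ is bounded below by $e^{-\rho_L\theta}(\underline y+K)$, so it is controlled from below. The difficulty is exchanging limit and expectation for this term; for that we use the standing uniform-integrability assumption on the localized payoff. Once justified, we get $\varphi(y)\ge J^\alpha(y)$, and taking the supremum over $\alpha$ yields $v(y)\le y+K$.
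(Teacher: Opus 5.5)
Your proof is correct and follows the paper's route. The lower bound uses the same immediate dividend of $y-\underline y$ followed by liquidation, and the upper bound uses the same supersolution check for $\varphi(y)=y+K$. The only difference is that you do the verification inline rather than through Lemma~\ref{Prop_ineq}, which treats a general $C^2$ $\varphi$ interval by interval using $\varphi\geq\mathcal H\varphi$ at each $\tau_n$; for affine $\varphi$ the issuance jumps fit into one It\^o expansion, so this is a harmless simplification.

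The ``main obstacle'' you flag is not really one. Since $\varphi\geq\underline y+K\geq\underline y-1>0$ on $[\underline y,\infty)$, the term $e^{-\rho_L\theta}\varphi(Y_\theta)\mathbf 1_{\{\theta<T^\alpha\}}$ is nonnegative and can be dropped before letting $R\to\infty$. The paper's lemma likewise uses only $\varphi\geq 0$ at this step. After that, monotone convergence and the integrability of the issuance cash flows finish the argument, with no uniform-integrability assumption needed.
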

Indeed, for any initial state $y\geq \underline y$, the bank can pay dividends of size $y - \underline y$ and then liquidate at the boundary, receiving $\underline y-1$. In particular, if $-\rho_L\geq M+\gamma$, then $v(y)=y-1$, and the optimal policy is to pay $y-\underline y$ immediately and liquidate. Accordingly, throughout the rest of the paper, we also assume that the parameters satisfy $-\rho_L<M+\gamma$.
\begin{Remark}
At the hypothetical zero-equity level $y=1$, the solvency constraint forces $\pi=0$ and the diffusion coefficient of the ratio process vanishes. Since $\mu_L=\gamma+r_L$, its local drift at this point is $r-\mu_L+\gamma=r-r_L$. Thus, $y=1$ is locally repelling when $r>r_L$ and locally attracting when $r<r_L$. This observation does not impose a restriction on the present model, because normal operation terminates at the strictly higher intervention threshold $\underline y>1$. Accordingly, both $r>r_L$ and $r<r_L$ are admissible, subject to Assumption~\ref{assump1} and the other standing conditions.\end{Remark}

\subsubsection{Viscosity characterization of the value function}
We first state the continuity of $v$, which is needed for both the viscosity framework and the stable characterization of the free boundaries. We then show in Proposition \ref{viscosity} that the value function is uniquely characterized as the solution of the variational inequality.
\begin{Proposition}[Viscosity characterization of the value function]\label{viscosity}
The value function $v$ is the unique continuous function on
$\lbrack \underline y,+\infty)$ that satisfies the linear growth condition $y-1\leq v(y)\leq y+K$ and is a viscosity solution of \eqref{HJB}.
\end{Proposition}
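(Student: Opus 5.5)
We prove the statement in three steps: continuity of $v$, the viscosity property, and a comparison principle that yields uniqueness in the class of continuous functions with $y-1\leq \varphi(y)\leq y+K$. The growth bounds themselves come from Proposition~\ref{lower_upperbound}.

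\emph{Continuity.} The first ingredient is monotonicity with a unit slope lower bound, $v(y')-v(y)\ge y'-y$ for $y'\ge y$. This follows by paying the dividend $y'-y$ immediately and then following an $\epsilon$-optimal strategy from $y$. For the reverse modulus we couple the diffusions started at $y$ and $y'$ under the same control $\pi$. Before any intervention the coefficients are locally Lipschitz in $y$, and so is $\overline{\pi}$, being the minimum of two Lipschitz functions on $[\underline y,\infty)$. Standard SDE estimates then give $\E|Y^{y'}_t-Y^{y}_t|\le C|y'-y|e^{Ct}$ on bounded horizons. The delicate point is the boundary $\underline y$. The trajectory started at $y$ may hit $\underline y$ strictly earlier than the one started at $y'$, so we have to control $\E[e^{-\rho_L T^{y'}}]$ relative to $\E[e^{-\rho_L T^{y}}]$. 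Near $\underline y>1$ the diffusion coefficient $\sigma_L^2(1-y)^2+\dots$ is bounded away from zero, because $|c|<1$ keeps the quadratic form positive definite. Hence $\underline y$ is regular and the exit time is continuous in the initial point. Alternatively, we can show continuity at $\underline y$ directly by comparing with the boundary value $\max(\underline y-1,\mathcal{H}v(\underline y))$ and using the DPP \eqref{dpp} with a short horizon $\theta$. Combining the coupling estimate with the DPP then gives continuity on $(\underline y,\infty)$ and at $\underline y$.

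\emph{Viscosity property.} We use the DPP \eqref{dpp} in the standard way. For the supersolution property at $\bar y>\underline y$, take a test function $\varphi\le v$ touching at $\bar y$. Three strategies give the three inequalities. A constant control $\pi$ on $[0,\theta]$, with no dividends and no issuance, gives $\rho_L\varphi-\Lc^\pi\varphi\ge0$ via Itô's formula as $\theta\to0$. An immediate small dividend $\delta$ gives $\varphi'(\bar y)\ge 1$. An immediate issuance gives $v\ge\mathcal{H}v$. For the subsolution property we argue by contradiction. Suppose $\varphi\ge v$ touches at $\bar y$ and all three expressions are strictly positive there, with $v(\bar y)>\mathcal{H}v(\bar y)$. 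By continuity of $\mathcal{H}v$, which follows from continuity of $v$ and the linear growth bound (the latter ensures the sup is attained on a compact set), all three inequalities hold with a margin $\eta$ on a neighbourhood of $\bar y$. Applying Itô's formula to $e^{-\rho_L t}\varphi(Y_t)$ up to the exit time from this neighbourhood, for an arbitrary control, handles the dividend jumps through $\varphi'\ge1+\eta$, bounds the issuance terms through the margin, and yields $v(\bar y)\le \varphi(\bar y)-\eta'$. This contradicts the DPP. The boundary condition $v(\underline y)=\max(\underline y-1,\mathcal{H}v(\underline y))$ holds by definition, since at $\underline y$ the bank must liquidate or issue.

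\emph{Uniqueness.} This is the main obstacle: a comparison principle for the nonlocal operator $\mathcal{H}$ on an unbounded domain with linear growth. We take a subsolution $u$ and a supersolution $w$, both within the growth bounds and both satisfying the boundary relation, and aim to show $u\le w$. First we perturb to a strict supersolution $w_\epsilon=w+\epsilon\psi$ with $\psi(y)=y^2+C$ (or $y^{p}$ with $1<p<2$). Assumption~\ref{assump1} makes $\rho_L\psi-\sup_\pi\Lc^\pi\psi>0$ for $C$ large, and $\psi'\ge0$ preserves the gradient constraint. For the impulse term we need $\mathcal{H}w_\epsilon\le w_\epsilon-\epsilon\delta$. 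The fixed cost $\frac{\kappa\kappa'}{1-\kappa'}(y-1)\ge\frac{\kappa\kappa'}{1-\kappa'}(\underline y-1)>0$ supplies a strictly positive wedge here, and this is exactly where $\underline y>1$ is used. Since $\psi$ grows superlinearly, the supremum of $u-w_\epsilon$ is attained at a point $y_0$. Now consider cases. If $y_0$ lies in the impulse region of $u$, the wedge gives a contradiction by the standard Ishii argument: iterating $u\le\mathcal{H}u$ moves us to a point $z>y_0$ where $u-w_\epsilon$ is strictly larger. If $y_0=\underline y$, the boundary relations for $u$ and $w$ reduce this to the same two alternatives. Otherwise we double variables with the penalty $\frac{n}{2}|y-y'|^2$, which is legitimate because the coefficients of $\Lc^\pi$ are Lipschitz uniformly in $\pi\in[0,\overline{\pi}(y)]$ and $\overline{\pi}$ is Lipschitz. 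Ishii's lemma then yields the contradiction in the diffusion part, while the gradient constraint is handled because both penalty derivatives coincide. Finally we let $\epsilon\to0$.
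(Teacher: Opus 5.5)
\textbf{The gap is in uniqueness, at the construction of the strict supersolution.} Your continuity and viscosity-property steps are broadly in line with the paper. For continuity, the paper avoids coupling altogether. It uses monotonicity plus the DPP for a no-intervention strategy started from the \emph{lower} point. That sidesteps the admissibility repairs your coupling needs: a control feasible from $y'$ need not satisfy $\pi_t\le\overline{\pi}(Y^{y}_t)$, nor keep $Y^{y}\ge\underline y$ under the same dividends.

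An additive perturbation $w_\epsilon=w+\epsilon\psi$ with $\psi$ superlinear is not a supersolution of the obstacle part at all. Since $w(z)\ge z-1$ and the issuance cost in $\mathcal{H}$ grows only linearly in the jump size, $\mathcal{H}w_\epsilon(y)\ge\sup_{z>y}\bigl[z-1+\epsilon\psi(z)-\frac{z-y}{1-\kappa'}\bigr]-\frac{\kappa\kappa'}{1-\kappa'}(y-1)=+\infty$ for $\psi(z)=z^2+C$ or $z^p$ with $p>1$. Even for an increasing linear $\psi$, all you get from $w\ge\mathcal{H}w$ is $w_\epsilon(y)-\bigl[w_\epsilon(z)-\frac{z-y}{1-\kappa'}-\frac{\kappa\kappa'}{1-\kappa'}(y-1)\bigr]\ge-\epsilon\bigl(\psi(z)-\psi(y)\bigr)$, which is negative. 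The issuance cost, including the wedge $\frac{\kappa\kappa'}{1-\kappa'}(y-1)$, enters only once and is already used up by $w$'s own inequality. So the wedge you invoke never materializes for $w_\epsilon$. The impulse branch of your comparison, and the case $y_0=\underline y$ that you reduce to it, therefore yield no contradiction.

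Separately, $\psi=y^2$ does not give strictness in the differential part either. Assumption~\ref{assump1} controls only linear growth. At the paper's baseline parameters, the $y^2$-coefficient of $\rho_L\psi-\mathcal{L}^{\pi}\psi$ is negative for $\pi$ near $1/a_3$. The $y^p$ variant works there only for $p$ close to $1$.

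\textbf{The missing idea is the one the paper takes from Seydel.} Perturb by a \emph{convex combination} $w^\epsilon=\epsilon h+(1-\epsilon)w$ with a \emph{linear} $h(y)=k_0y+C$, where $1<k_0<\frac{1}{1-\kappa'}$. Then:
\begin{itemize}
\item Because the cost inside $\mathcal{H}$ is affine, $\mathcal{H}$ is convex. Hence $w^\epsilon-\mathcal{H}w^\epsilon\ge\epsilon(h-\mathcal{H}h)=\epsilon\frac{\kappa\kappa'}{1-\kappa'}(y-1)\ge\epsilon\frac{\kappa\kappa'}{1-\kappa'}(\underline y-1)>0$. This is where $\underline y>1$ and $k_0<\frac{1}{1-\kappa'}$ are used.
\item The gradient constraint is strict: $h'-1=k_0-1>0$.
\item The differential part is strict: $\rho_Lh-\sup_\pi\mathcal{L}^\pi h=k_0y(\rho-\mu^*(y))+\rho_LC-k_0\gamma$ is bounded below by a positive constant for $C$ large, by Assumption~\ref{assump1}.
\end{itemize}
Coercivity of $u-w^\epsilon$ then comes from the slope gap, not from superlinearity: $u-w^\epsilon\le \mathrm{const}-\epsilon(k_0-1)y$. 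This uses precisely the growth class $y-1\le\cdot\le y+K$ in the statement. With this $w^\epsilon$, the rest of your outline goes through as in the paper: the impulse contradiction at the maximum point, doubling of variables with the control truncated to $\overline{\pi}$ at the second point, and Ishii's lemma.
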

The key structural property of the value function is concavity because it shapes the geometry of the optimal control regions. Economically, concavity of $v$ indicates the diminishing marginal value of capital: an additional unit of $y$ is most valuable near the distress boundary, which leads to a single dividend threshold.
\begin{Proposition}[Concavity of the value function]\label{concavity} $v$ is concave on $\lbrack \underline y,+\infty)$.
\end{Proposition}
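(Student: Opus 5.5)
The plan is to prove concavity by a controlled coupling argument at the level of the reduced problem of Proposition \ref{thm1}, using the fact that the state dynamics are affine in $(y,\pi y)$ and that the admissible set is convex in the right coordinates. Write $\theta_t:=\pi_t Y_t$ for the dollar-per-deposit risky exposure. In terms of $\theta$, the dynamics between issuances read
\[
dY_t=\big((r-\mu_L)Y_t+(\mu-r)\theta_t+\gamma\big)dt+\sigma\theta_t\,dB_t+\sigma_L(1-Y_t)dW_t-dZ_t .
\]
This is affine jointly in $(Y,\theta,Z)$. The constraint $0\le\pi\le\overline\pi(y)$ becomes $0\le\theta\le y\overline\pi(y)=\min\big(\frac{y-1}{a_1},\frac{y-a_2}{a_3}\big)$, which is a minimum of affine functions and hence concave in $y$. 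So the set $\{(y,\theta):0\le\theta\le y\overline\pi(y)\}$ is convex. The issuance jump $Y_{\tau}=(1-\kappa)Y_{\tau^-}+(1-\kappa')\xi+\kappa$ is affine as well. The payoff is linear in $(Z,\xi,Y_{\tau^-})$, and the liquidation payoff is the constant $\underline y-1$.

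Fix $y_1,y_2\ge\underline y$, $\lambda\in(0,1)$ and $\varepsilon$-optimal controls $\alpha^i$ for $y_i$. I would build a control for $y_\lambda=\lambda y_1+(1-\lambda)y_2$ by mixing:
\begin{itemize}
\item $\theta=\lambda\theta^1+(1-\lambda)\theta^2$ and $Z=\lambda Z^1+(1-\lambda)Z^2$;
\item issuances at the union of both families of issuance times, with sizes $\lambda\xi^1_n$ and $(1-\lambda)\xi^2_m$.
\end{itemize}
By linearity the mixed state equals $\lambda Y^1+(1-\lambda)Y^2$, and the constraint holds by convexity of the constraint set. The reward then splits as $\lambda J(\alpha^1)+(1-\lambda)J(\alpha^2)$, except for the issuance-cost term $\kappa(Y_{\tau^-}-1)$. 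Checking this term, each separate issuance pays $\kappa\lambda(Y^1_{\tau^-}-1)$, which must be compared with a dilution charge on the full mixed state. Here the dilution cost is favourable if we only issue when required: $\kappa$ multiplies the pre-issue equity, and since it enters linearly the mixed control reproduces exactly the convex combination provided issuance happens jointly. This matches only approximately, however, so I would handle issuance more carefully. It is cleaner to work through the variational-inequality characterization than through pathwise mixing.

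The more robust route, which I would take as the main line, uses Proposition \ref{viscosity} and approximation. Consider the sequence of problems with at most $n$ issuances, $v_0,v_1,\dots$, where $v_{n+1}$ solves the singular-control problem (dividends and $\pi$ only) with obstacle $\max(\underline y-1,\mathcal H v_n(\underline y))$ at the boundary.

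\textbf{Step 1.} Show that $v_0$ is concave. This is the pure dividend problem with constant terminal payoff, and the pathwise mixing in $(Y,\theta,Z)$ above works with no issuance term. When one of the two coupled paths is liquidated first, the mixed path must stay alive, with liquidation deferred until both component paths have exited. I would handle this by letting the liquidated component continue as a frozen state at $\underline y$ paying nothing. The mixed state is then still $\ge\underline y$ and admissible. Its value at the end equals $\lambda(\underline y-1)$ plus the continuation from the other component, and this is at most what it achieves, using $v\ge y-1$ from Proposition \ref{lower_upperbound} on the frozen part. The frozen part contributes $\lambda(\underline y-1)\le\lambda v(\underline y)$-type bounds going in the right direction.

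\textbf{Step 2.} Show that $\mathcal H$ acts only through the single number $\mathcal H v_n(\underline y)$, since intervention happens only at $\underline y$. Then $v_{n+1}$ is again a dividend problem with constant boundary value, so it is concave by Step 1.

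\textbf{Step 3.} Show $v_n\uparrow v$ locally uniformly, using the bounds \eqref{bounds}, the uniqueness in Proposition \ref{viscosity}, and stability of viscosity solutions. Concavity passes to the limit.

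The main obstacle is Step 1's handling of asynchronous liquidation in the coupling, together with justifying that issuance only at $\underline y$ is without loss of generality for the approximating problems. For the latter, I would argue directly from the definition of $T^\alpha$ and the admissibility rules that an interior issuance can be replaced by a delayed one. If that fails, I would fall back on the standard viscosity argument: assume a strict local convexity point of $v$ and contradict the supersolution inequality $v'\ge1$ and the HJB via a comparison with a linear interpolant on the interval.
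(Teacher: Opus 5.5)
\textbf{The gap is in Step~2.} The claim that issuance ``happens only at $\underline y$'' is exactly Theorem~\ref{optimal_strategy}(ii), and the paper derives it \emph{from} concavity. A supporting-line bound gives $\mathcal Hv(y)<v(y)$ wherever $D^+v(y)\le\frac{1}{1-\kappa'}$, and a second concavity estimate excludes interior points with $D^+v>\frac{1}{1-\kappa'}$. Using it inside the proof of concavity is therefore circular.

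Concretely, your $v_n$ are values of problems in which issuance is confined to the boundary, so they increase to the value $\tilde v$ of the boundary-only problem. Step~3 can identify $\tilde v$ with $v$ through Proposition~\ref{viscosity} only if you already know $\tilde v\ge\mathcal H\tilde v$ at interior points.

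Your proposed independent justification, replacing an interior issuance by a delayed one using only the admissibility rules, does not work. There is no pathwise domination, because issuing earlier raises $Y$ and so enlarges the state-dependent set $0\le\theta\le\min(\frac{y-1}{a_1},\frac{y-a_2}{a_3})$. Whether an early issue pays is a value comparison, not a bookkeeping fact. The fallback does not help either: local convexity is compatible with $v'\ge1$, and the HJB alone gives no sign on $v''$.

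\textbf{The missing idea is that $\mathcal H$ itself preserves concavity.} The post-issuance map $(y,\xi)\mapsto(1-\kappa)y+(1-\kappa')\xi+\kappa$ is jointly affine, the feasible set $\{\xi>\frac{\kappa}{1-\kappa'}(y-1)\}$ is convex in $(y,\xi)$, and the cash flow $-\xi+\kappa(y-1)$ is affine. Taking $\xi_\lambda=\lambda\xi_1+(1-\lambda)\xi_2$ therefore gives $\mathcal H\varphi(y_\lambda)\ge\lambda\mathcal H\varphi(y_1)+(1-\lambda)\mathcal H\varphi(y_2)$.

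The paper then iterates $v^{(n+1)}=T(\mathcal Hv^{(n)})$, where $T$ is a dividend/optimal-stopping operator whose stopping (issuance) time is arbitrary. Issuance is thus allowed at every state, and nothing about the issuance region is needed. Convergence $v^{(n)}\uparrow v$ follows by truncating an $\varepsilon$-optimal strategy after $n$ issuances, with loss at most $(K+1)\E[e^{-\rho_L\tau_n}\1_{\{\tau_n<T^\alpha\}}]$. Pointwise monotone limits of concave functions are concave, so no viscosity stability is required.

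\textbf{Your boundary-only scheme can be closed without circularity.} Suppose $\tilde v$ is concave and $\tilde v(\underline y)\ge\mathcal H\tilde v(\underline y)$, and fix $y>\underline y$. There are two cases.
\begin{enumerate}
\item If $D^+\tilde v(y)\le\frac{1}{1-\kappa'}$, the supporting line gives $\mathcal H\tilde v(y)\le\tilde v(y)-\frac{\kappa\kappa'}{1-\kappa'}(y-1)<\tilde v(y)$.
\item If $D^+\tilde v(y)>\frac{1}{1-\kappa'}$, the boundary inequality gives $\mathcal H\tilde v(y)\le\tilde v(\underline y)+\frac{1-\kappa\kappa'}{1-\kappa'}(y-\underline y)$, while concavity gives $\tilde v(y)-\tilde v(\underline y)>\frac{y-\underline y}{1-\kappa'}$.
\end{enumerate}
Hence $\tilde v>\mathcal H\tilde v$ in the interior, so $\tilde v$ solves \eqref{HJB}. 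After checking continuity and the growth bounds, uniqueness gives $\tilde v=v$. You would have to supply this argument; it does not follow from admissibility.

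\textbf{The freezing device in Step~1 is not admissible.} The reduced dynamics are affine, not linear, in $Y$. A component parked at $\underline y$ with $\theta=0$, $Z=0$ still carries drift $(r-\mu_L)\underline y+\gamma$ and noise $\sigma_L(1-\underline y)\,dW\neq0$. So $\lambda\underline y+(1-\lambda)Y^2$ solves the state equation for no control, since $\theta$ loads only on $B$ with $|c|<1$ and $Z$ only pushes $Y$ down. Asynchronous absorption needs a genuine argument; the paper itself defers to \cite{chotakzho03} here. Your Step~2 also needs that argument for a general boundary constant $\max(\underline y-1,\mathcal Hv_n(\underline y))$, not just $\underline y-1$.
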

Finally, we define the continuation, dividend, and issuance regions, and establish interior regularity of $v$, which yields a classical solution in the continuation region. This is not merely technical: it provides the smooth-fit conditions used to compute the dividend barrier, and it supports the numerical analysis.
\begin{Corollary}\label{regularity} The value function $v$ is $C^1$ on $\{y>\underline y: v(y)>\mathcal{H}v(y)\}$. Define the issuance region $\Kc$, dividend region $\Dc$, and the continuation region $\Cc$ by 
\beq \mathcal{K}
&:=& \left\{ y \geq \underline y~,  v(y) \; = \; \mathcal{H}v(y) \right\} \label{Capital issuing region}\\
\Dc &=& \overline{{\rm int}\left(\left\{ y \geq \underline y~, v'(y) = 1 \right\}\right)}, \label{Dividend region}  \\
\Cc &=&  \{y>\underline y:  v(y)>\mathcal{H}v(y), \enskip D^+v(y)>1\} . \enq 
The set $\Cc$ is open and $\Kc \cap \Dc = \emptyset$. Furthermore,
$v$ is $C^2$ on the open set $\Cc\cup {\rm int}(\Dc)$, and the HJB equation
$\rho_L v(y) - \Sup_{0\leq \pi\leq\overline{\pi}(y)}\Lc^\pi v(y) = 0$ holds in the classical sense for $y \in \Cc$.
\end{Corollary}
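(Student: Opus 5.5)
We plan to derive the Corollary from the viscosity characterization (Proposition \ref{viscosity}), the concavity of $v$ (Proposition \ref{concavity}) and the linear bounds \eqref{bounds}, working in this order: $C^1$ regularity, structure of the regions, then $C^2$ regularity and the classical HJB equation.

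\textbf{Step 1: $C^1$ regularity.} Because $v$ is concave, the one-sided derivatives $D^\pm v$ exist at every point, satisfy $D^+v\le D^-v$, and a failure of differentiability at $y_0$ means $D^+v(y_0)<D^-v(y_0)$. We argue by contradiction. Take $y_0>\underline y$ with $v(y_0)>\mathcal{H}v(y_0)$ and suppose $D^+v(y_0)<D^-v(y_0)$. Choose $p$ strictly between these two values and set
\[
\varphi(y)=v(y_0)+p(y-y_0)+\tfrac{1}{2\varepsilon}(y-y_0)^2 .
\]
This test function touches $v$ from above at $y_0$, with $\varphi''(y_0)=1/\varepsilon$. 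By Proposition \ref{viscosity}, $v$ is a viscosity subsolution of \eqref{HJB}. Since $v(y_0)>\mathcal{H}v(y_0)$, the subsolution inequality reads
\[
\min\big(\rho_L v(y_0)-\sup_\pi \Lc^\pi\varphi(y_0),\; p-1\big)\le 0 .
\]
We rule out each term in turn.
\begin{itemize}
\item The diffusion coefficient is $\tfrac12\big(\pi^2\sigma^2y^2+2\pi c\sigma\sigma_Ly(1-y)+\sigma_L^2(1-y)^2\big)$. Because $|c|<1$ and $y_0>1$, it is bounded below by $\tfrac12(1-c^2)\sigma_L^2(y_0-1)^2>0$, uniformly in $\pi$. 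Hence $\sup_\pi\Lc^\pi\varphi(y_0)\to+\infty$ as $\varepsilon\to0$, and the first term cannot be $\le 0$ for small $\varepsilon$.
\item For the gradient term, we use $v'\ge1$. Indeed, $v(y')-v(y)\ge y'-y$ for $y'>y$: this follows from the dividend-payment argument, namely paying $y'-y$ immediately is admissible. Therefore $D^+v\ge1$ everywhere. Choosing $p$ in the open interval $(D^+v(y_0),D^-v(y_0))$ gives $p>1$, so $p-1>0$.
\end{itemize}
Neither term can be $\le0$, which is a contradiction. Hence $v$ is differentiable on $\{y>\underline y: v>\mathcal Hv\}$. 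A concave differentiable function on an open set is automatically $C^1$ there.

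\textbf{Step 2: structure of the regions.} By the continuity of $v$, the operator $\mathcal{H}v$ is lower semicontinuous, as a supremum of continuous functions; in fact it is continuous, since it is a supremum of functions that are Lipschitz in $y$, uniformly over a compactified set of $z$ thanks to the growth bound \eqref{bounds}. Consequently $\{v>\mathcal Hv\}$ is open. The map $y\mapsto D^+v(y)$ is right-continuous and nonincreasing, so $\{D^+v>1\}$ is an interval of the form $[\underline y,b)$, and in particular it is open in $(\underline y,\infty)$. Therefore $\Cc$ is open.

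For $\Kc\cap\Dc=\emptyset$, we argue by contradiction. Let $y\in\Kc$ and suppose $v'\equiv1$ on a neighbourhood of $y$. The supremum defining $\mathcal{H}v(y)$ is attained at some $z>y$; attainment follows from \eqref{bounds}, since the slope $1/(1-\kappa')>1$ makes the objective tend to $-\infty$ as $z\to\infty$. Using $v(z)-v(y)\le z-y$ (concavity combined with $v'=1$ near $y$), we obtain
\[
\mathcal Hv(y)\le v(y)+(z-y)\Big(1-\tfrac{1}{1-\kappa'}\Big)-\tfrac{\kappa\kappa'}{1-\kappa'}(y-1)<v(y),
\]
which contradicts $y\in\Kc$. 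The same argument extends to the closure of the interior, because $\Kc$ is closed.

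\textbf{Step 3: $C^2$ regularity and classical HJB.} On $\Cc$, the gradient and impulse obstacles are strictly inactive. Hence $v$ is a viscosity solution of $\rho_L v-\sup_{0\le\pi\le\overline\pi(y)}\Lc^\pi v=0$ on each bounded interval $(a,b)\Subset\Cc$. This equation is uniformly elliptic on such intervals, by the same lower bound $(1-c^2)\sigma_L^2(y-1)^2$ on the diffusion coefficient. We then use the standard localization argument. Solve the Dirichlet problem on $(a,b)$ with boundary data $v(a),v(b)$. Because the equation is one-dimensional, it can be rewritten as $v''=G(y,v,v')$ with $G$ continuous: we divide by the diffusion coefficient, and the supremum over $\pi$ in the compact set $[0,\overline\pi(y)]$ of a continuous function is continuous in $(y,v',v'')$, so the equation can be solved for $v''$. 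This yields a $C^2$ solution. By uniqueness of viscosity solutions for this boundary value problem (comparison principle), it coincides with $v$.

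On $\mathrm{int}(\Dc)$, $v$ is affine with slope $1$, so it is trivially $C^2$.

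\textbf{Main obstacle.} We expect the main difficulty in Step 3 to be solving the fully nonlinear equation explicitly for $v''$, since the supremum over $\pi$ mixes the $v''$ and $v'$ terms. We would handle this by a monotonicity argument: the map $q\mapsto\rho_Lv-\sup_\pi[\tfrac12 a(\pi,y)q+b(\pi,y)v']$ is continuous and strictly decreasing in $q$, because $a\ge(1-c^2)\sigma_L^2(y-1)^2>0$. This allows an implicit-function style reduction of the equation to a second-order ODE $v''=G(y,v,v')$ with $G$ continuous, to which classical ODE existence applies. We would combine this with the comparison principle that was already used in the proof of Proposition \ref{viscosity}.
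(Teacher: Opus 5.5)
\textbf{Step 1 fails as written because the curvature of your test function has the wrong sign.} With $\varphi(y)=v(y_0)+p(y-y_0)+\tfrac{1}{2\varepsilon}(y-y_0)^2$ you correctly get $\sup_\pi\mathcal{L}^\pi\varphi(y_0)\to+\infty$. But then $\rho_L v(y_0)-\sup_\pi\mathcal{L}^\pi\varphi(y_0)\to-\infty$, which \emph{is} $\le 0$. So the subsolution inequality is satisfied and no contradiction arises. A symptom of this is that your test function never uses the kink: for any concave $v$ and any $p\in[D^+v(y_0),D^-v(y_0)]$ one has $v(y)\le v(y_0)+p(y-y_0)\le\varphi(y)$, smooth or not.

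The fix, which is the paper's argument, is to take $\varphi(y)=v(y_0)+p(y-y_0)-\tfrac{1}{2\varepsilon}(y-y_0)^2$. Here the strict inequalities $D^+v(y_0)<p<D^-v(y_0)$ are essential. Concavity gives $v(y)\le v(y_0)+D^+v(y_0)(y-y_0)$ for $y>y_0$ and $v(y)\le v(y_0)+D^-v(y_0)(y-y_0)$ for $y<y_0$. Hence $\varphi\ge v$ on $|y-y_0|\le 2\varepsilon\min\{p-D^+v(y_0),\,D^-v(y_0)-p\}$, with equality at $y_0$. Your uniform lower bound $\tfrac12(1-c^2)\sigma_L^2(y_0-1)^2$ on the diffusion coefficient now yields $\sup_\pi\mathcal{L}^\pi\varphi(y_0)\to-\infty$, so the first term tends to $+\infty$. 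Together with $p>1$ and $v(y_0)>\mathcal{H}v(y_0)$, the minimum is strictly positive, which contradicts the subsolution property. The kink is exactly what allows touching from above with arbitrarily negative curvature.

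\textbf{Minor points.} In Step 2, extending $\Kc\cap\Dc=\emptyset$ to the closure ``because $\Kc$ is closed'' is not a valid inference: a closed set can be disjoint from an open set yet meet its closure. Argue pointwise instead. Since $D^+v$ is nonincreasing, right-continuous and $\ge1$, one has $D^+v=1$ at every point of $\Dc$, including its left endpoint. Your bound then gives $\mathcal{H}v(y)\le v(y)-\frac{\kappa\kappa'}{1-\kappa'}(y-1)<v(y)$ directly. You also do not need attainment of the supremum in $\mathcal{H}v(y)$, which can fail as $z\downarrow y$; the $\kappa\kappa'$ term already gives strictness.

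In Step 3, classical ODE existence (Peano) concerns initial value problems. For the two-point Dirichlet problem you need a boundary-value existence result, for example a Nagumo-type theorem using the linear growth of $G$ in $v'$, or shooting combined with comparison. This is at the same level of detail as the paper's appeal to Shreve--Soner. Apart from these points, your structure matches the paper's proof.
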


\subsection{Optimal dividend and capital issuance strategies}
\label{sec_optimal_strategies}
In this section, we translate the variational inequality into explicit characterizations of the optimal strategies. Boundary policies for payout and costly external financing are well known in dynamic corporate finance, notably in \cite{bcw2011}. The purpose of the analysis here is to show that this familiar structure continues to hold in our regulated banking setting, despite the state-dependent and kinked investment cap induced by the joint solvency and liquidity constraints. First, dividends are optimal when retaining one more unit of capital is no more valuable than paying it out. The concavity of the value function means that once paying dividends is optimal, it remains optimal for all larger $y$, which implies a single barrier structure. The same concavity argument also governs capital issuance. Theorem \ref{optimal_strategy} shows that the optimal strategy has a simple two-sided structure: dividends are paid at an upper barrier, while recapitalization, when optimal, is triggered at the distress boundary and jumps to a post-issuance target.
\begin{Theorem}\label{optimal_strategy}\textbf{Optimal dividend and capital issuance strategy.}
The optimal strategy is characterized as follows.
\begin{enumerate}
\item[(i)] The dividend region is of the form
$\Dc=\lbrack y^*,+\infty)$, where $y^*: = \inf\{y\geq \underline y: D^+v(y)=1\}$, and $y^*<\infty$. In addition, $y^*$ satisfies $\rho_L v(y^*)=\gamma-(\mu_L-\mu^*(y^*))y^*$.
\item[(ii)] The issuance region $\Kc\subset\{\underline y\}$. Moreover, if $\Kc\not=\emptyset$, then $D^+ v(\underline y)>\frac{1}{1-\kappa^\prime}$, and there exists a post-issuance target $y_{post}^*$ such that $D^-v(y_{post}^*)\geq \frac{1}{1-\kappa^\prime}\geq D^+v(y_{post}^*)$. The value function at $y=\underline y$ satisfies
 $v(\underline y)=v({y_{post}^*})-\frac{y_{post}^*-\underline y}{1-\kappa^\prime}-\frac{\kappa \kappa'}{1-\kappa'}\left(\underline y-1\right)$.
\end{enumerate}
\end{Theorem}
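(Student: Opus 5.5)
The proof rests on the concavity of $v$ (Proposition \ref{concavity}), the viscosity characterization (Proposition \ref{viscosity}), the regularity statement (Corollary \ref{regularity}) and the bounds \eqref{bounds}. We treat the two parts separately.

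For (i), concavity makes $D^+v$ nonincreasing, and the gradient constraint in \eqref{HJB} gives $D^+v\ge 1$. Hence once $D^+v(y_0)=1$, we get $D^+v=1$ on $[y_0,\infty)$, so $\{D^+v=1\}$ is an interval $[y^*,\infty)$ and its closure of interior is $\Dc=[y^*,\infty)$.

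To show $y^*<\infty$, we argue by contradiction. Suppose $D^+v>1$ everywhere. Then $v$ is strictly increasing with slope at least $1$. By (ii), proved independently below, $\Kc\subset\{\underline y\}$, so every $y>\underline y$ lies in $\Cc$. On $\Cc$ the equation $\rho_L v=\sup_\pi \Lc^\pi v$ holds classically, and $v''\le 0$. This gives $\rho_L v(y)\le (y[\mu^*(y)-\mu_L]+\gamma)v'(y)$. Using $v(y)\ge y-1$ and $v'\to\ell\ge1$, the right side is asymptotically $(\mu^*(y)-\mu_L)y\,\ell+\gamma\ell$. Assumption \ref{assump1} gives $\sup_y\mu^*(y)<\rho$, so $\mu^*-\mu_L<\rho_L$ uniformly. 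The linear growth of $v$ (with slope $\ell$, from $v\le y+K$) then makes the left side exceed the right side for large $y$, a contradiction.

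For the identity at $y^*$, note that $v$ is affine with slope $1$ to the right of $y^*$, and $v$ is $C^2$ on $\Cc\cup{\rm int}(\Dc)$. We use $C^1$ smooth fit at $y^*$ together with a one-sided second-derivative argument: for $y\downarrow y^*$ inside $\Cc$, concavity and $v'\downarrow1$ force $v''(y^*-)\to0$. This is the delicate point; we would try to get it from the viscosity subsolution property in ${\rm int}(\Dc)$ combined with supersolution continuity. Passing to the limit in the HJB equation gives $\rho_L v(y^*)=\sup_\pi[(y^*(\mu(\pi)-\mu_L)+\gamma)]=\gamma-(\mu_L-\mu^*(y^*))y^*$, because with $v''=0$ the supremum is attained by the drift maximizer.

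For (ii), set $g(y):=v(y)-\mathcal Hv(y)$. In post-issuance coordinates,
\[
\mathcal Hv(y)=\sup_{z>y}\Bigl[v(z)-\tfrac{z-y}{1-\kappa'}\Bigr]-\tfrac{\kappa\kappa'}{1-\kappa'}(y-1).
\]
Suppose $y_0>\underline y$ lies in $\Kc$, with near-optimal target $z>y_0$. Concavity lets us compare $v(y_0)$ with $v(z)$. Then $v(z)-v(y_0)\le D^+v(y_0)(z-y_0)$, while $v(y_0)=\mathcal Hv(y_0)$ requires $v(z)-v(y_0)\ge \frac{z-y_0}{1-\kappa'}+\frac{\kappa\kappa'}{1-\kappa'}(y_0-1)>\frac{z-y_0}{1-\kappa'}$. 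So $D^+v(y_0)>\frac1{1-\kappa'}$.

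The main obstacle is to rule out such interior points. Our first approach is a strategy-improvement argument. Starting at $y_0>\underline y$, we compare issuing now with waiting and issuing at the first hitting time of $\underline y$ (or of a level just below $y_0$). Issuing later to the same target costs the proportional wedge $\kappa\kappa'(y-1)/(1-\kappa')$, which decreases in $y$, plus linear cost, and the cash outflow is also discounted. Made rigorous via the DPP \eqref{dpp}, this gives $v(y_0)>\mathcal Hv(y_0)$. Alternatively, we would show that $g$ is strictly increasing on $(\underline y,\infty)$ using the concavity of $v$ and the envelope of the sup. Since $\Kc\cap\Dc=\emptyset$ and near $\underline y$ the strict inequality $D^+v(\underline y)>\frac1{1-\kappa'}$ follows from the computation above, we get $\Kc\subset\{\underline y\}$.

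Finally, if $\Kc=\{\underline y\}$, the supremum in $\mathcal Hv(\underline y)$ is attained. The function $z\mapsto v(z)-z/(1-\kappa')$ is concave, continuous, and tends to $-\infty$, since $v'\to1<\frac1{1-\kappa'}$. Its maximizer $y^*_{post}>\underline y$ satisfies the first-order condition $D^-v(y_{post}^*)\ge\frac1{1-\kappa'}\ge D^+v(y_{post}^*)$. The stated formula for $v(\underline y)$ is just $v(\underline y)=\mathcal Hv(\underline y)$ evaluated at this maximizer.
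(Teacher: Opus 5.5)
Your proposal has two genuine gaps, both at the steps you yourself flagged. In (i), you derive $\rho_L v(y^*)=\gamma-(\mu_L-\mu^*(y^*))y^*$ from the claim that concavity and $v'\to1$ force $v''(y^*-)=0$. That claim is false: a concave $C^1$ function can have $v''(y^*-)<0$ while being affine to the right of $y^*$. Two further corrections: $\Cc$ lies to the \emph{left} of $y^*$, so the relevant limit is $y\uparrow y^*$; and on ${\rm int}(\Dc)$ the subsolution property is trivially satisfied through the gradient term, so only the supersolution property carries information there. No second-order fit is needed. On $(y^*,\infty)$ we have $v(y)=v(y^*)+y-y^*$, which is smooth, so the supersolution inequality holds classically and reads $\rho_L v(y)\ge\gamma+(\mu^*(y)-\mu_L)y$; let $y\downarrow y^*$. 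For the reverse inequality, take $y_n\uparrow y^*$ with $y_n\in\Cc$; this uses $y^*>\underline y$ and the fact that issuance does not accumulate at $y^*$. There the HJB equation holds classically, and $v''\le0$, $v'\ge1$ give $\rho_L v(y_n)\le v'(y_n)\bigl(\gamma+(\mu^*(y_n)-\mu_L)y_n\bigr)$. $C^1$ fit and continuity of $\mu^*$ then finish. This two-sided sandwich is the paper's argument.

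In (ii), the decisive step, excluding a point $y_K\in\Kc\cap(\underline y,\infty)$, is never actually carried out. Your first sketch rests on a reversed monotonicity. The wedge $\frac{\kappa\kappa'}{1-\kappa'}(y-1)$ \emph{increases} in $y$, and the total cost $\frac{z-y}{1-\kappa'}+\frac{\kappa\kappa'}{1-\kappa'}(y-1)$ of reaching a fixed target $z$ decreases in $y$ at rate $\frac{1-\kappa\kappa'}{1-\kappa'}$. So waiting and issuing lower is \emph{more} expensive, and no strict improvement comes out of that comparison. The missing idea is a same-target comparison from the left. For $y\in[\underline y,y_K)$, every $z>y_K$ is also admissible at $y$, hence $\mathcal Hv(y)\ge\mathcal Hv(y_K)-\frac{1-\kappa\kappa'}{1-\kappa'}(y_K-y)$. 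If $y_K\in\Kc$, then $v\ge\mathcal Hv$ gives $v(y)\ge v(y_K)-\frac{1-\kappa\kappa'}{1-\kappa'}(y_K-y)$. Therefore $D^-v(y_K)\le\frac{1-\kappa\kappa'}{1-\kappa'}<\frac1{1-\kappa'}<D^+v(y_K)$, the last inequality being your own slope bound, and this contradicts $D^-v\ge D^+v$ for concave $v$. This is what the paper does, and it is exactly the ingredient your second suggestion (monotonicity of $g=v-\mathcal Hv$) would need in order to work. Your closing sentence, which deduces $\Kc\subset\{\underline y\}$ from $\Kc\cap\Dc=\emptyset$ and the slope at $\underline y$, does not follow from those facts. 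Finiteness of $y^*$ only needs the easy half of (ii): for large $y$, $D^+v(y)<\frac1{1-\kappa'}$, so $y\notin\Kc$. That part therefore does not inherit the gap. Your treatment of $y^*_{post}$ via one-sided derivatives matches the paper.
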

\begin{Remark}
In addition, if the parameters satisfy the following conditions, then $y^*$ is uniquely identified by $\rho_L v(y)=\gamma-(\mu_L-\mu^*(y))y$:
\begin{align*}
    \begin{cases}
        \rho \neq r+\frac{\mu-r}{a_1},\rho \neq r+\frac{\mu-r}{a_3},\quad \mu>r,\\
        \rho \neq r,\quad \mu\leq r.
    \end{cases}
\end{align*}
\end{Remark}
Economically, Theorem \ref{optimal_strategy} says that costly external finance makes the bank reluctant to recapitalize preemptively: issuance is used only at the distress boundary, and then only to jump to the level at which the marginal value of internal funds equals the marginal cost of new equity. Conversely, when capitalization is high, the marginal value of retaining one more unit falls to one, and the excess is paid out as dividends. The regulatory cap $\overline{\pi}(y)$ affects where these thresholds lie by changing both the expected growth and the risk of the asset-to-deposit ratio.
The decomposition of the mechanism is as follows. The one-sided issuance region is driven primarily by the distress boundary, proportional issuance costs, and concavity of the value function. The regulatory constraints do not create the threshold structure from scratch; rather, they alter the continuation value by changing the admissible risk-return trade-off at each capitalization level. This is why the theorem is best read as a robustness result for classical payout/financing thresholds under a kinked Basel-style investment set. If one introduced fixed issuance costs, endogenous market access, or adverse-selection costs that rise near distress, earlier recapitalization or inaction at the boundary could become optimal.
\section{Sensitivity analysis and safety-profitability frontiers under regulation}
\label{sec4}
\subsection{Value function and optimal strategies}
\footnote{The numerical results in this section were generated using scripts available on GitHub at \url{https://github.com/yuqiongwang/bank_capital_structure}.} We begin by plotting the value function $v$ and comparing it with the benchmark case in which equity issuance is not allowed, denoted by $v^{HJB}$. To isolate the value of recapitalization, the
benchmark uses the same $\underline y$ and the same investment constraints as the main model, but does not allow equity issuance. The function $v^{HJB}$ solves
\[ 0 =\min \{ \rho_L v^{HJB}(y) - \Sup_{0\leq\pi\leq \overline{\pi}(y)}\Lc^\pi v^{HJB}(y); (v^{HJB})^\prime(y)- 1\},\quad v^{HJB}(\underline y)=\underline y-1.\]
The no-issuance benchmark is computed on the same state domain as the full case, with common boundary $v^{HJB}(\underline y)=\underline y-1$, so the comparison differs only by the availability of recapitalization. Throughout the numerical analysis, unless otherwise stated, we use the baseline parameter set $r=0.02, \mu=0.04, \mu_L=0.03, \rho=0.12, \gamma=0.02$, together with $\sigma=0.08, \sigma_L=0.03,c=0.20, \kappa=0.01, \kappa'=0.02$. The baseline regulatory parameters are $(a_1,a_2,a_3)=(0.045,0.05,0.30)$, the intervention threshold is $\underline y=1.02$, and value functions are evaluated at $y_0=1.20$. Here, $r$ and $\mu$ are the risk-free and risky-asset returns, $\mu_L$ is the drift of deposits, $\rho$ is the shareholders' discount rate, $\sigma$ and $\sigma_L$ are asset and deposit volatilities, $c$ is their shock correlation, and $\gamma$ is the exogenous component of deposit growth. These values constitute an illustrative parameterization rather than an empirical calibration: they satisfy the model's well-posedness conditions and are chosen to make the regulatory mechanisms transparent. Accordingly, the numerical conclusions are stated only for the parameter ranges examined below.

We solve the VI and the no-issuance HJB on a bounded state grid using a monotone finite-difference scheme. The maximization over risky investment is performed on a discrete control grid, and the dividend-gradient and issuance-intervention conditions are imposed at each iteration. The baseline calculations use 301 state points over the displayed computational range, 151 control points, a pseudo-time step of $0.05$, and a convergence tolerance of $10^{-6}$; the upper boundary uses the asymptotic unit-slope condition. The illustrative trajectory is simulated by an Euler--Maruyama scheme with time step $0.001$. The regulatory-frontier calculations use the grids reported below and $2,000$ simulated paths per configuration for the confidence-adjusted results. The GitHub archive contains the solver, simulation scripts, and parameter checks used to exclude infeasible configurations.

Figure \ref{fig1} shows that the incremental value generated by the issuance option is largest near $y=\underline y$ and decreases monotonically with $y$. In other words, the flexibility to recapitalize is most useful when the bank is close to the distress boundary and becomes less relevant when the bank is well capitalized. For completeness, we also plot the value function in the original $(x,\ell)$ coordinates.
\begin{figure}[H]
    \centering
    \begin{subfigure}[t]{0.32\textwidth}\centering
\includegraphics[width=\linewidth,keepaspectratio]{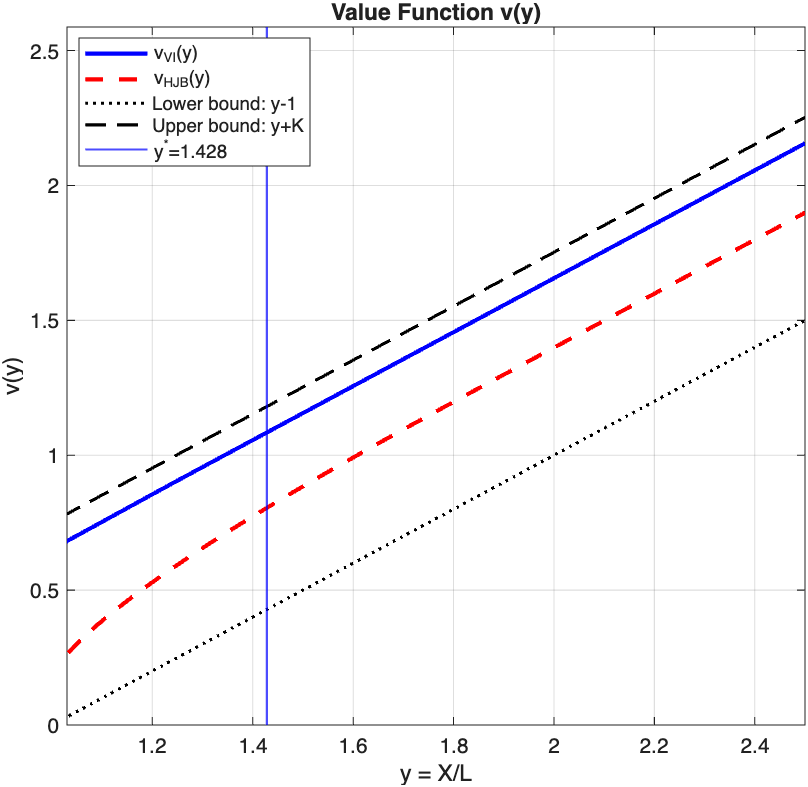}
        \caption{The value function $v(y)$}
    \end{subfigure}\hfill
    \begin{subfigure}[t]{0.32\textwidth}\centering
\includegraphics[width=\linewidth,keepaspectratio]{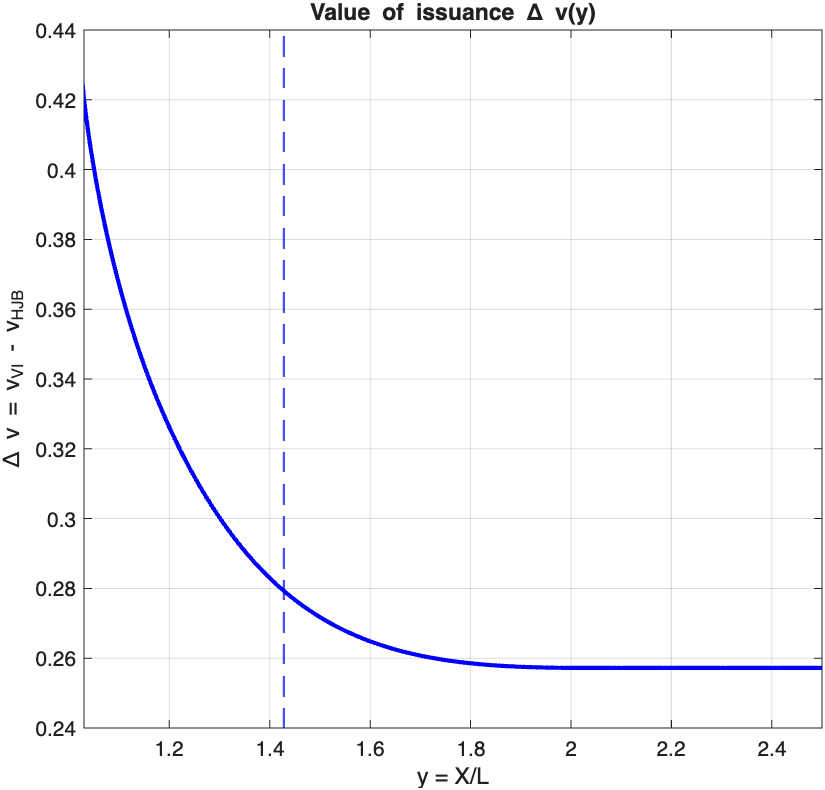}
\caption{$v(y) - v^{HJB}(y)$}
    \end{subfigure}\hfill
    \begin{subfigure}[t]{0.32\textwidth}\centering
    \includegraphics[width=\linewidth,keepaspectratio]{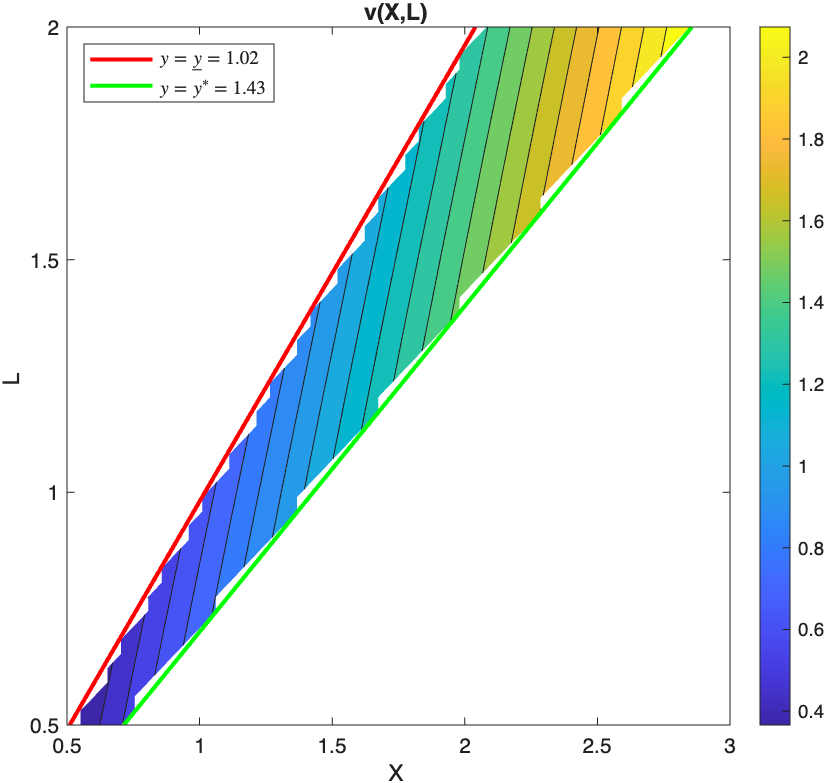}
        \caption{Contour in $(x,\ell)$}
    \end{subfigure}
    \caption{The value function in the $y$ and $(x,\ell)$ coordinates.}\label{fig1}
\end{figure}
We illustrate the strategy in Figure \ref{fig2_trajectory} by simulating a single sample path over a horizon of $50$ years, starting from $y_0 = 1.2$, with a dividend barrier $y^* = 1.4282$ and a post-issuance target $y_{post}^* = 1.1828$ (shown as $1.184$ after discretization in the figure). The path shows the expected two-sided control structure: each time the state hits the distress boundary $\underline y=1.02$, it jumps upward into the interior of the continuation region. Each time the state reaches the dividend region at $y^*$, it is reflected back into the continuation region. In this simulation, cumulative issuance is $1.509$ and cumulative dividend payments are $6.533$, as shown in Figure \ref{fig3_div}.
\begin{figure}[H]
    \centering
        \includegraphics [height=5cm,width=0.8\linewidth,keepaspectratio]{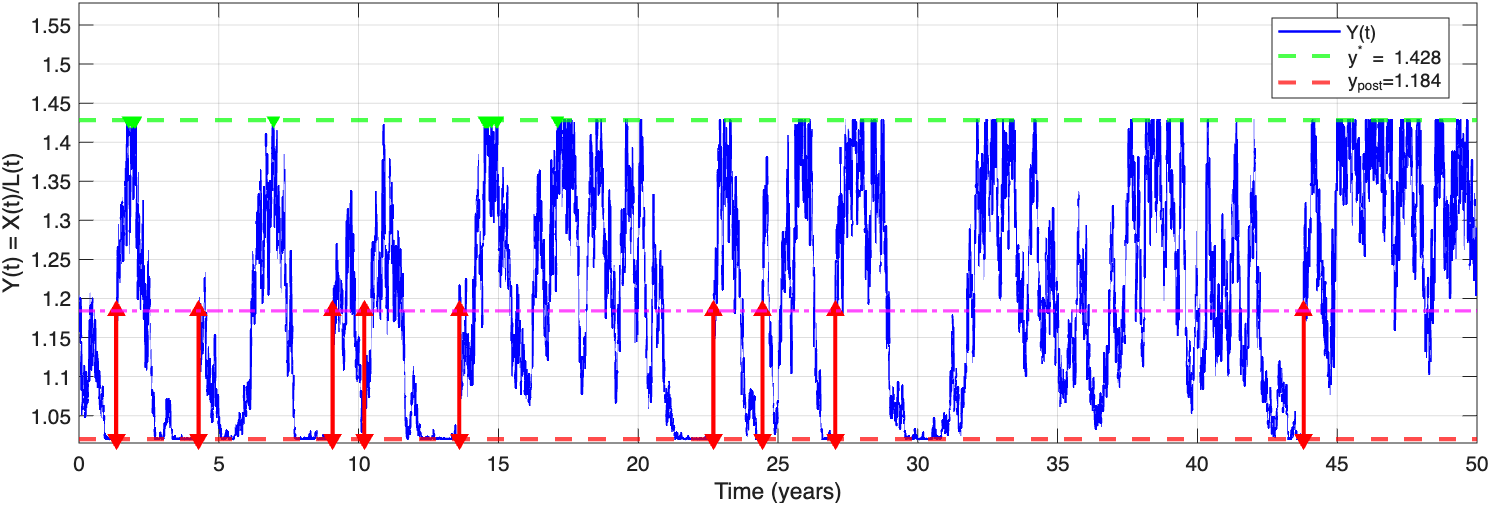}
    \caption{Trajectory from $y_0 =1.2$ over $50$ years}\label{fig2_trajectory}
\end{figure}
\begin{figure}[H]
    \centering
\includegraphics[height=5cm,width=0.7\linewidth,keepaspectratio]{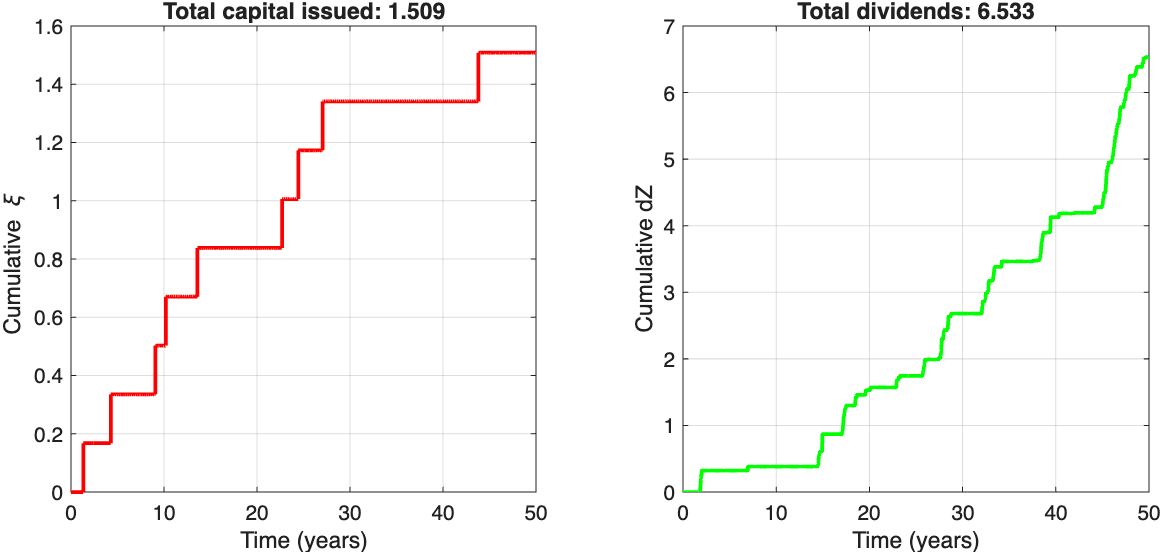}
    \caption{Cumulative capital issuance and dividends over $50$ years}\label{fig3_div}
\end{figure}
\subsection{Sensitivity to regulatory parameters}
We next study how the optimal dividend boundary $y^*$ and the relative value difference $G$ respond to changes in the regulatory parameters $a_1$, $a_2$, and $a_3$. In the model, $(a_1, a_2, a_3)$ enter only through the state-dependent investment cap $\overline{\pi}(y)$.

Changing these parameters affects the drift and volatility of the underlying process $Y$ by altering the admissible investment cap and, hence, the feasible values of $\pi(y)\leq \overline{\pi}(y)$. The dividend barrier $y^*$ reflects the balance between current dividend payouts and maintaining a future buffer under the constrained dynamics. The relative value of issuance increases when the constraint makes it more likely that the bank will reach the distress boundary, and it decreases when the process tends to avoid the lower boundary, making recapitalization less attractive. \par
The key structure is the minimum operator in the regulatory constraint, with a switching point $\widehat y>1$ that induces two distinct investment regimes. Under the baseline regulatory parameters
$(a_1,a_2,a_3)=(0.045,0.05,0.30)$,
the switching point is $\widehat y
=\frac{a_3-a_1a_2}{a_3-a_1}=
1.168$. For $\underline y\leq y\leq \widehat y$, the solvency constraint is binding, whereas for $y>\widehat y$, the liquidity constraint is binding. Under the baseline calibration, an issuance event moves the bank to the post-issuance state $y_{\mathrm{post}}^*=1.1828>\widehat y$. Thus, recapitalization not only raises the bank's capital ratio but also moves it from the solvency-dominated region into the liquidity-dominated region. The continuation value following issuance is therefore affected by both parts of the regulatory constraint. The numerical comparative statics are reported in Table \ref{sensitivity_a}. Keeping all other parameters fixed, we evaluate the value function at $y_0=1.2$ and vary one regulatory parameter at a time. We report: (i) the optimal dividend threshold $y^*$; (ii) the relative value of the issuance option,
\begin{equation}
    G(y_0)
    :=
    \frac{v(y_0)-v^{\mathrm{HJB}}(y_0)}
         {v^{\mathrm{HJB}}(y_0)},
\end{equation}
recall that $v^{\mathrm{HJB}}$ denotes the value when capital issuance is unavailable; and (iii) the shareholder value $v(y_0)$. The quantity $G(y_0)$ measures the relative value of
recapitalization, keeping $\underline y$ and $a$ fixed. It reflects both the likelihood of reaching the intervention boundary and the continuation value generated after the bank is recapitalized.
\begin{table}[H]
\centering
\caption{Sensitivity to $(a_1,a_2,a_3)$ at $y_0=1.2$.}
\label{sensitivity_a}
\small
\setlength{\tabcolsep}{3.5pt}

\begin{minipage}[t]{0.32\textwidth}
\centering
\textbf{Sensitivity to $a_1$}\\[0.3em]
\begin{tabular}{c|ccc}
\hline\hline
$a_1$ & $y^*$ & $G(1.2)$ & $v(1.2)$\\
\hline
0.045 & 1.4282 & 0.6166 & 0.8552\\
0.050 & 1.4495 & 0.6010 & 0.8469\\
0.060 & 1.4957 & 0.5670 & 0.8289\\
0.070 & 1.5472 & 0.5292 & 0.8090\\
0.080 & 1.6040 & 0.4900 & 0.7869\\
0.090 & 1.6665 & 0.4560 & 0.7626\\
0.100 & 1.7350 & 0.4274 & 0.7361\\
0.110 & 1.8102 & 0.4028 & 0.7069\\
0.120 & 1.8927 & 0.3801 & 0.6749\\
\hline\hline
\end{tabular}
\end{minipage}
\hfill
\begin{minipage}[t]{0.32\textwidth}
\centering
\textbf{Sensitivity to $a_2$}\\[0.3em]
\begin{tabular}{c|ccc}
\hline\hline
$a_2$ & $y^*$ & $G(1.2)$ & $v(1.2)$\\
\hline
0.050 & 1.4282 & 0.6166 & 0.8552\\
0.060 & 1.4237 & 0.6100 & 0.8495\\
0.080 & 1.4146 & 0.5967 & 0.8381\\
0.090 & 1.4101 & 0.5901 & 0.8324\\
0.100 & 1.4056 & 0.5834 & 0.8267\\
0.120 & 1.3965 & 0.5701 & 0.8154\\
0.150 & 1.3829 & 0.5502 & 0.7983\\
0.180 & 1.3693 & 0.5302 & 0.7812\\
0.200 & 1.3602 & 0.5168 & 0.7698\\
\hline\hline
\end{tabular}
\end{minipage}
\hfill
\begin{minipage}[t]{0.34\textwidth}
\centering
\textbf{Sensitivity to $a_3$}\\[0.3em]
\begin{tabular}{c|ccc}
\hline\hline
$a_3$ & $y^*$ & $G(1.2)$ & $v(1.2)$\\
\hline
0.240 & 1.7228 & 0.8014 & 1.0531\\
0.270 & 1.5310 & 0.6866 & 0.9380\\
0.300 & 1.4282 & 0.6166 & 0.8552\\
0.330 & 1.3616 & 0.5592 & 0.7921\\
0.360 & 1.3140 & 0.5118 & 0.7423\\
0.390 & 1.2779 & 0.4723 & 0.7019\\
0.420 & 1.2495 & 0.4391 & 0.6685\\
0.450 & 1.2266 & 0.4108 & 0.6404\\
0.480 & 1.2076 & 0.3866 & 0.6164\\
\hline\hline
\end{tabular}
\end{minipage}
\end{table}
\normalsize

First, increasing the solvency parameter $a_1$ raises the dividend threshold substantially, from $1.4282$ to $1.8927$. This result is consistent with the state-dependent nature of the solvency requirement. A larger $a_1$ reduces the admissible risky investment position most strongly at low capitalization levels. The bank responds to the tighter regulation by retaining earnings longer and paying dividends only after reaching a larger precautionary capital buffer. At the same time, shareholder value decreases from $0.8552$ to $0.6749$. The relative value of issuance also decreases with $a_1$, from $0.6166$ to $0.3801$. Although issuance restores the bank's capitalization, the recapitalized bank continues to face the more restrictive constraint, and the continuation value generated by recapitalization declines as $a_1$ increases.
Second, increasing the liquidity parameter $a_2$ lowers the dividend threshold from $1.4282$ to $1.3602$ and reduces shareholder value from $0.8552$ to $0.7698$. A larger $a_2$ shifts the liquidity cap downward, thereby reducing the bank's investment opportunities in the liquidity-dominated region, which contains both $y_0=1.2$ and $y_{\mathrm{post}}^*=1.1828$. The bank therefore begins distributing dividends earlier. The issuance gain $G(1.2)$ also decreases, from $0.6166$ to $0.5168$. Capital raised through issuance is deployed mainly in the liquidity-dominated region, so tightening $a_2$ reduces the continuation value created by the newly issued capital.
Finally, increasing $a_3$ produces a considerably stronger effect. As $a_3$ rises from $0.24$ to $0.48$, the dividend threshold falls from $1.7228$ to $1.2076$, shareholder value falls from $1.0531$ to $0.6164$, and the relative issuance gain falls from $0.8014$ to $0.3866$. Unlike $a_2$, which enters through the state-dependent term $a_2/y$, the parameter $a_3$ scales the entire liquidity constraint and determines its asymptotic upper bound:
$\lim_{y\to\infty}\overline\pi(y)=\frac{1}{a_3}$.
An increase in $a_3$ sharply reduces the productivity of retained and issued capital, causing the bank to pay dividends at a lower threshold and reducing both the total value function and the relative contribution of the issuance option.
The results reveal an important asymmetry between the two regulatory regimes. Tightening the solvency constraint near distress induces the bank to retain a larger precautionary buffer, whereas tightening the liquidity constraint at higher state levels induces earlier dividend payments. Hence, regulatory tightening does not have a uniform effect; its effect depends on the region of the state space in which the corresponding constraint binds.\par
To separate the effects of the two regulatory constraints, we compare four regimes: no Basel constraint, solvency only, liquidity only, and both constraints. Figure \ref{fig:constraint-decomposition} reports this numerical decomposition. The no-Basel case imposes only a common exogenous leverage ceiling, so the admissible investment sets are nested across regimes. Under a loose common leverage ceiling, the benchmark without a Basel constraint has the highest value, and the joint-constraint case has the lowest. The LCR-only curve is close to the joint-constraint curve, while the solvency-only curve is close to the unconstrained benchmark. Thus, the LCR accounts for most of the value loss in this parameter range. When $\pi_{\max}=1$, the LCR is everywhere weaker than the direct leverage ceiling and is therefore redundant: $v_{\text{none}} = v_{\text{LCR}}$ and $v_{\text{solvency}} = v_{\text{both}}$. In this case, the LCR imposes no additional restriction once risky investment is already tightly capped.
\begin{figure}[H]
\centering
\begin{subfigure}[t]{0.49\textwidth}
    \centering
    \textbf{\small $\pi<\infty$}\\[0.4em]
    \begin{minipage}[t][3.15cm][t]{\textwidth}
        \centering
        \small
        \resizebox{\textwidth}{!}{%
        \begin{tabular}{l|rrr}
        \hline\hline
        Regime
        & $v(1.2)$
        & Value loss
        & $G(1.2)$\\
        \hline
        None
        & 1.4040
        & 0
        & 1.1406\\

        Solvency only
        & 1.3872
        & 0.0168
        & 1.1124\\

        LCR only
        & 0.9220
        & 0.4820
        & 0.8100\\

        Both
        & 0.8552
        & 0.5488
        & 0.6166\\
        \hline\hline
        \end{tabular}%
        }
    \end{minipage}

    \vspace{0.4em}
    \includegraphics[
        width=\textwidth,
        height=6cm,
        keepaspectratio
    ]{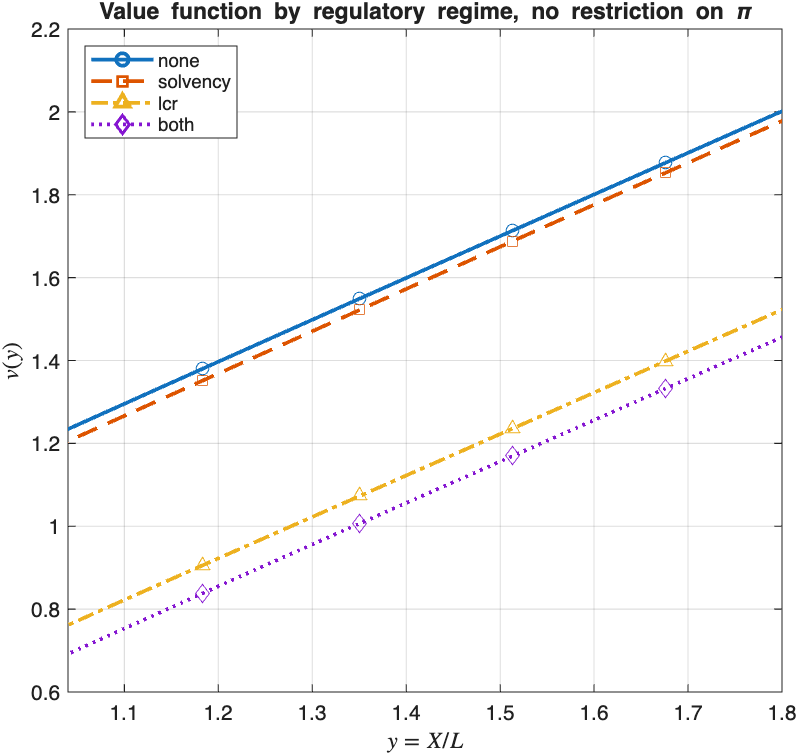}
    \caption{$\pi<\infty$.}
    \label{fig:loose-cap-regime}
\end{subfigure}
\hfill
\begin{subfigure}[t]{0.49\textwidth}
    \centering
    \textbf{\small $\pi\leq1$}\\[0.4em]
    \begin{minipage}[t][3.15cm][t]{\textwidth}
        \centering
        \small
        \resizebox{\textwidth}{!}{%
        \begin{tabular}{l|rrr}
        \hline\hline
        Effective regime
        & $v(1.2)$
        & Value loss
        & $G(1.2)$\\
        \hline
        None
        & 0.4726
        & 0
        & 0.2779\\

        Solvency only
        & 0.4527
        & 0.0199
        & 0.2295\\

        LCR only
        & 0.4726
        & 0
        & 0.2779\\

        Both
        & 0.4527
        & 0.0199
        & 0.2295\\
        \hline\hline
        \end{tabular}%
        }
    \end{minipage}
    \vspace{0.2em}
    \includegraphics[
        width=\textwidth,
        height=6cm,
        keepaspectratio
    ]{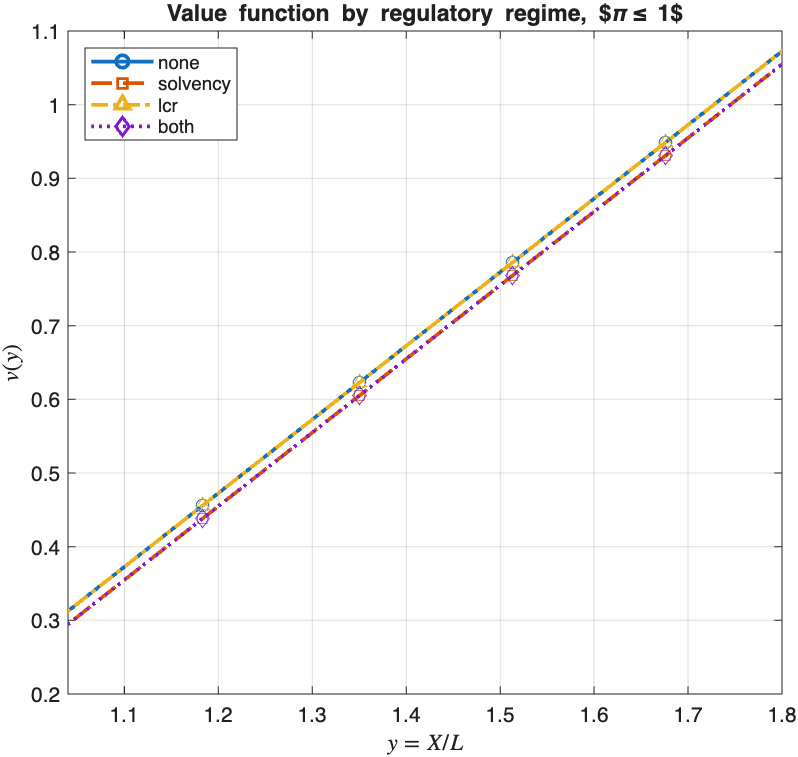}
    \caption{$\pi\leq1$.}
    \label{fig:tight-cap-regime}
\end{subfigure}
\caption{Constraint decomposition across regulatory regimes at
$y_0=1.2$.}
\label{fig:constraint-decomposition}
\end{figure}

\normalsize
\subsection{Regulatory parameter optimization}\label{regulator_optimization}
We next examine the regulator's choice of parameters $a=(a_1,a_2,a_3)$, taking into account the bank's endogenous investment, dividend, and recapitalization decisions. Throughout this subsection, the internal recapitalization threshold $\underline y$ is held fixed. For each triple $a$, the bank responds by solving the corresponding VI. Let $v_a(y)$ denote the bank's value under $a$. Since equity issuance is triggered
when the capital ratio reaches $\underline y$, we interpret reaching this boundary as a regulatory stress event and define $\tau_a:=\inf\{t\geq0:Y_t^a\leq\underline y\}$.
The associated $T$-year survival probability is $\mathbb P_{y_0}(\tau_a>T)$. Observe that the reported survival probabilities are evaluated under the original physical measure $\P$. Tighter regulatory requirements reduce the bank's admissible investment set, but their effect on resilience is not obvious. The bank may respond by changing its risky exposure, dividend threshold, or recapitalization policy. We therefore evaluate their joint effects on value and risk. For a prescribed safety level $\eta\in(0,1)$, we define the regulator's
constrained problem as
\begin{equation}
\label{eq:regulator-grid-problem}
\sup_{a\in\mathcal A_R}
\left\{
v_a(y_0):
\mathbb P_{y_0}(\tau_a>T)\geq\eta
\right\},
\end{equation}
where $\mathcal A_R$ denotes the set of admissible regulatory
parameters. Problem \eqref{eq:regulator-grid-problem} is not intended to be a complete social-welfare problem. It is a reduced-form constrained-efficiency problem: the regulator preserves bank profitability while imposing an explicit resilience requirement. A richer objective may also penalize intervention frequency, recapitalization costs, deposit-insurance losses, or other externalities. We return to such an extension after studying the constrained frontier. For the baseline comparison, we set $T=5$ and $y_0=1.2$ and consider the parameter grid
\begin{align*}
a_1&\in\{0.045,0.05,0.06,0.07,0.08,0.09,0.10,0.11,0.12\},\\
a_2&\in\{0.05,0.06,0.08,0.09,0.10,0.12,0.15,0.18,0.20\},\\
a_3&\in\{0.15,0.20,0.25,0.30,0.35,0.40,0.45,0.50,0.55\}.
\end{align*}
Parameter combinations that violate the model's feasibility conditions are removed before solving the VI. For every $a$, we compute $v_a(y_0)$ and estimate $\mathbb P_{y_0}(\tau_a>T)$ by Monte Carlo simulation. Let $\widehat p_a: = \widehat{\mathbb P}_{y_0}(\tau_a>5)$
denote the point estimate of the survival probability. A policy $a$ lies on the point-estimate Pareto frontier if there is no other
admissible policy $\widetilde a$ such that $v_{\widetilde a}(y_0)\geq v_a(y_0)$ and $\widehat p_{\widetilde a}\geq\widehat p_a$.
Thus, along the frontier, an increase in safety can be achieved only by reducing $v$. We compare two investment regimes, with and without a leverage constraint ($\pi(y)\leq 1$). Because $\pi>1$ represents a leveraged position, we treat the $\pi\leq1$ case as the main banking interpretation and the unrestricted case as a mathematical benchmark.
\begin{figure}[H]
\centering
\begin{subfigure}[t]{0.48\textwidth}
    \centering
    \includegraphics[
        width=\linewidth,
        height=0.31\textheight,
        keepaspectratio
    ]{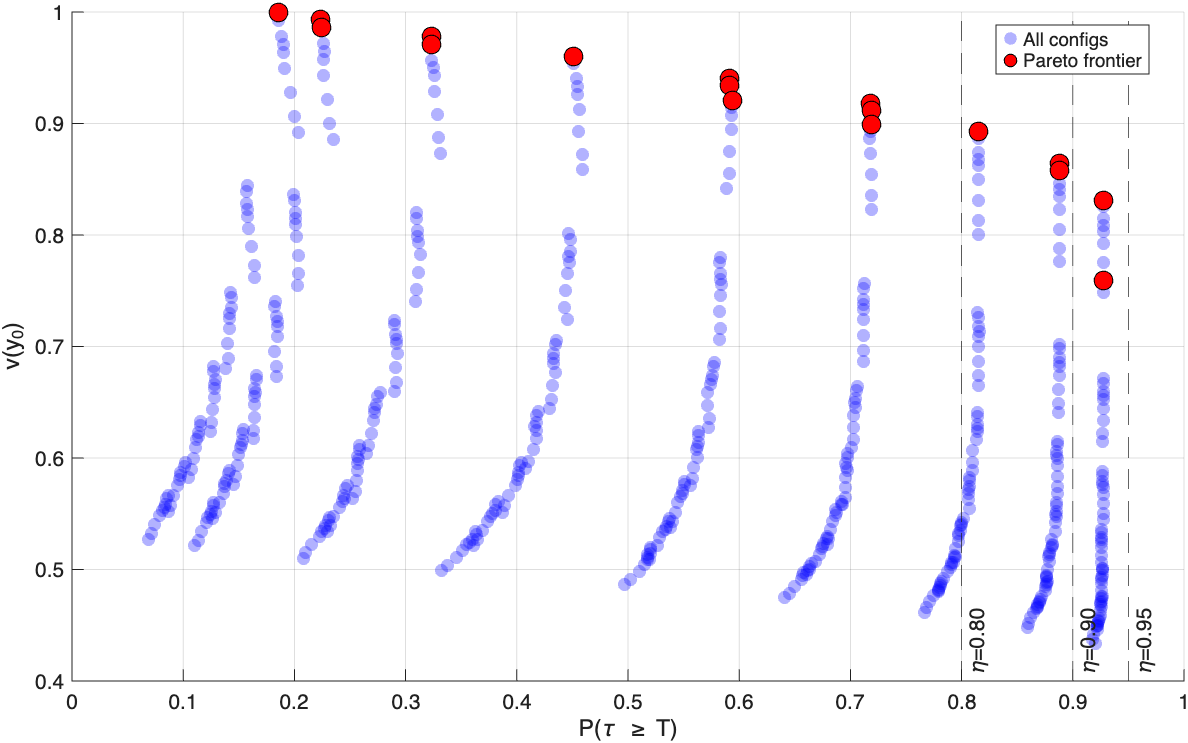}
    \caption{Pareto frontier $\pi<\infty$.}
    \label{fig:pareto-loose}
\end{subfigure}
\hfill
\begin{subfigure}[t]{0.48\textwidth}
    \centering
    \includegraphics[
        width=\linewidth,
        height=0.31\textheight,
        keepaspectratio
    ]{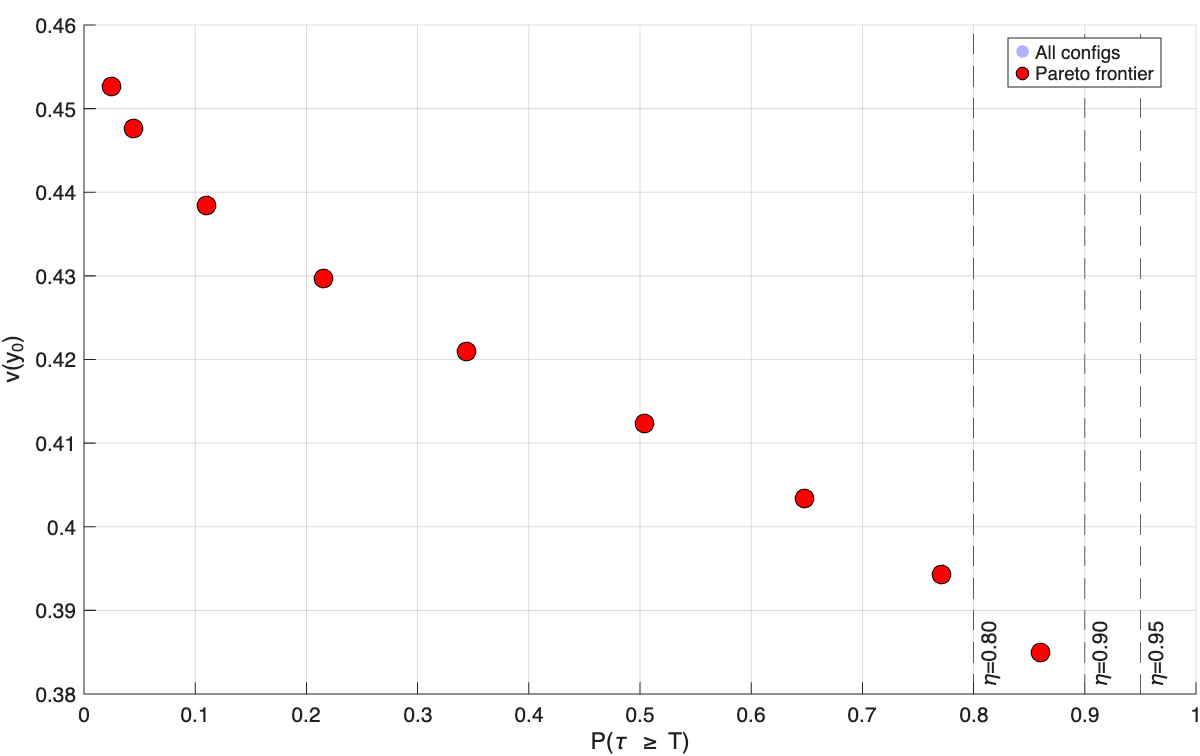}
    \caption{Pareto frontier $\pi\leq 1$.}
    \label{fig:pareto-tight}
\end{subfigure}
\caption{Frontier between shareholder value and five-year survival probability.}
\label{fig:pareto-comparison}
\end{figure}
\begin{figure}[H]
\centering
\begin{minipage}[t]{0.45\textwidth}
    \vspace{0pt}
    \centering
    \small
    \resizebox{\linewidth}{!}{%
    \begin{tabular}{ccc|ccc}
    \hline\hline
    $a_1$ & $a_2$ & $a_3$ & $y^*$ & $v(1.2)$ & $\widehat p_a$\\
    \hline
    0.045 & 0.05 & 0.25 & 1.6840 & 0.99991 & 0.1856\\
    0.050 & 0.05 & 0.25 & 1.7133 & 0.99292 & 0.2232\\
    0.060 & 0.05 & 0.25 & 1.7769 & 0.97773 & 0.3230\\
    0.070 & 0.05 & 0.25 & 1.8493 & 0.96040 & 0.4508\\
    0.080 & 0.05 & 0.25 & 1.9321 & 0.94062 & 0.5912\\
    0.090 & 0.05 & 0.25 & 2.0259 & 0.91814 & 0.7182\\
    0.100 & 0.05 & 0.25 & 2.1322 & 0.89269 & 0.8152\\
    0.110 & 0.05 & 0.25 & 2.2521 & 0.86389 & 0.8878\\
    0.120 & 0.05 & 0.25 & 2.3891 & 0.83099 & 0.9276\\
    \hline\hline
    \end{tabular}}
    \captionof{table}{Pareto frontier $\pi<\infty$.}
    \label{tab:pareto-loose-full}
\end{minipage}
\hfill
\begin{minipage}[t]{0.45\textwidth}
    \vspace{0pt}
    \centering
    \small
    \resizebox{\linewidth}{!}{%
    \begin{tabular}{ccc|ccc}
    \hline\hline
    $a_1$ & $a_2$ & $a_3$& $y^*$ & $v(1.2)$ & $\widehat p_a$\\
    \hline
    0.045 & 0.05 & 0.25& 1.0907 & 0.4527 & 0.0244\\
    0.050 & 0.05 & 0.25& 1.0964 & 0.4477 & 0.0444\\
    0.060 & 0.05 & 0.25& 1.1068 & 0.4384 & 0.1100\\
    0.070 & 0.05 & 0.25& 1.1166 & 0.4297 & 0.2152\\
    0.080 & 0.05 & 0.25& 1.1264 & 0.4210 & 0.3440\\
    0.090 & 0.05 & 0.25& 1.1361 & 0.4124 & 0.5042\\
    0.100 & 0.05 & 0.25& 1.1462 & 0.4034 & 0.6482\\
    0.110 & 0.05 & 0.25& 1.1564 & 0.3943 & 0.7708\\
    0.120 & 0.05 & 0.25& 1.1670 & 0.3849 & 0.8615\\
    \hline\hline
    \end{tabular}}
    \captionof{table}{Pareto frontier $\pi\leq 1$.}
    \label{tab:pareto-pi-one-full}
    \vspace{0.5em}
\end{minipage}
\end{figure}

Tables \ref{tab:pareto-loose-full} and \ref{tab:pareto-pi-one-full} report selected frontier points underlying Figure \ref{fig:pareto-comparison}. Without a leverage restriction, the feasible configurations generate a genuinely multidimensional set of outcomes. The scatter plot forms a sequence of approximately vertical clusters. Movement from one cluster to another is driven primarily by changes in $a_1$, whereas changes in $a_2$ and $a_3$ tend to affect $v$ more strongly than $\widehat p$. The solvency requirement $a_1$ is therefore the principal instrument
for moving the bank toward higher resilience in the present
experiment. The liquidity parameters still affect the shape
of the frontier, but additional
tightening of $a_2$ and $a_3$ often produces a visible loss in $v$ with only a small increase in the survival rate.

This pattern is even sharper under $\pi\leq1$. For our choices of $a_2\leq 0.20$, $a_3\leq 0.55$, and $y\geq\underline y=1.02$, $\frac{1}{a_3}\left(1-\frac{a_2}{y}\right)\geq\frac{1}{0.55}\left(1-\frac{0.20}{1.02}\right)\approx1.46>1$. Hence, throughout the parameter range considered, the direct ceiling $\pi\leq 1$ is tighter than the LCR constraint, so $\overline\pi_a(y)$ is independent of $a_2$ and $a_3$. Consequently, all triples sharing the same $a_1$ generate the same strategies and values. After outcome-equivalent configurations are collapsed, the frontier contains only one point for each value of $a_1$. The no-borrowing case is therefore effectively one-dimensional: only the solvency requirement remains relevant. When leveraged investment is permitted, the LCR parameters can affect the bank's optimal response, although $a_1$ remains dominant. When borrowing is prohibited, however, the LCR constraint is nonbinding within the parameter range considered.

The preceding Pareto frontier is constructed using the Monte
Carlo point estimates $\widehat p_a$ and a refined search with $20$ equally distributed values of $a_1$ on $[0.045,0.12]$, together with $a_2\in \{0.05,0.1\}$ and $a_3\in \{0.15,0.25, 0.50\}$. Let $[p_l,p_h]$ denote a $95\%$ confidence interval for $p_a$. To account for simulation error, we impose the conservative feasibility requirement $p_{l}\geq\eta$ when selecting a policy triple $a$. We also rerun the experiments on a refined grid for $a_1$ to extend the frontier. The experiment uses $2,000$ trajectories per configuration to select a candidate policy for each target. To separate policy selection from statistical validation, each
selected policy is then evaluated using an independent sample of
$5{,}000$ trajectories. The random-number generator is reset to a common seed for each candidate configuration, while the subsequent validation samples use independent seeds. Table ~\ref{tab_confidence} reports the policies selected under this confidence-adjusted criterion.
\begin{table}[H]
\centering
\small
\resizebox{\textwidth}{!}{%
\begin{tabular}{cc|ccc|ccc}
\hline\hline
Leverage regime
& $\eta$
& $a_1^*$
& $a_2^*$
& $a_3^*$
& $y^*$
& $v(1.2)$
& $\widehat p_a\;[\widehat p_l,\widehat p_h]$\\
\hline
$\pi<\infty$
& 0.80
& 0.10421
& 0.05
& 0.25
& 2.1809
& 0.88100
& $0.8434\;[0.8331,\,0.8532]$\\

$\pi<\infty$
& 0.90
& 0.12
& 0.05
& 0.25
& 2.3891
& 0.83099
& $0.9314\;[0.9241,\,0.9381]$\\
\hline
$\pi\leq1$
& 0.80
& 0.11605
& 0.05
& 0.25
& 1.1628
& 0.38860
& $0.8274\;[0.8167,\,0.8376]$\\

$\pi\leq1$
& 0.85
& 0.12
& 0.05
& 0.25
& 1.167
& 0.38492
& $0.8622\;[0.8524,0.8715]$\\
\hline\hline
\end{tabular}}
\caption{Pareto-optimal strategies with 95\% confidence intervals.}
\label{tab_confidence}
\end{table}
Under unrestricted investment, the target $\eta=0.80$ is satisfied by the regulatory vector $(a_1,a_2,a_3)\approx(0.1,0.05,0.25)$. Its estimated survival probability is $0.8434$, and the CI lower bound is $0.8331$. Increasing the target to $\eta=0.90$ leads the regulator to raise the solvency parameter to $a_1=0.12$. The resulting estimate is $0.9314$, with a lower bound of $0.9241$, so the policy satisfies the $90\%$ requirement. Under $\pi\leq1$, the target $\eta=0.80$ requires a tighter solvency constraint, $a_1=0.11605$, while $a_1=0.12$ satisfies the target $\eta=0.85$. Therefore, the parameter $a_1$ remains the main instrument for increasing survival, while $a_2$ and $a_3$ play a smaller role without a leverage restriction and no role under $\pi\leq1$ within the parameter range considered. 
\subsection{Regulatory design with a delegated intervention trigger}
\label{sec_delegation}
The regulatory analysis in the previous sections treats the intervention threshold $\underline y$ as fixed and studies the regulator's choice of the external regulatory parameters. We now ask a complementary implementation question: what changes if the external regulatory policy is fixed by the regulator, but the bank is allowed to choose its own intervention threshold?
For a fixed regulatory vector $a=(a_1,a_2,a_3)$, define $W_a(\underline y):=v_{a,\underline y}(y_0)$ and let $ \underline y^B(a)\in \arg\max_{\underline y}W_a(\underline y)$ denote the bank's preferred intervention threshold on our numerical grid. A first question is how to compare intervention outcomes when $\underline y$ itself varies. In this case, the trigger probability $\hat p^{\underline y}=\P_{y_0}(\tau_{\underline y}>T)$ is not a suitable measure across different values of $\underline y$, because a lower intervention threshold mechanically makes the boundary less likely to be reached. Instead, when a common operational constraint is required, we study the expected number of recapitalizations before time $T$, $\E[N_T]$.
    
Before studying delegation, we determine which components of the external regulatory vector can affect the bank in the relevant leverage regimes. We examine whether all three external requirements remain active under the two leverage regimes ($\pi<\infty$ and $\pi\leq1$) and across different values of $\underline y$. For each fixed pair $(a_1,\underline y)$, we measure how changing $(a_2,a_3)$ affects shareholder value and the estimated probability of avoiding intervention, $\hat p^{\underline y}$. Under $\pi<\infty$, the liquidity parameters remain economically relevant. Changing $(a_2,a_3)$ produces a maximum shareholder-value difference of approximately $0.281$, or $33\%$, and a maximum difference of $0.138$ in the probability of avoiding intervention. Thus, solvency and liquidity requirements operate as distinct regulatory instruments in this regime. Under $\pi\leq1$, varying $(a_2,a_3)$ produces no change in either
shareholder value or the probability of avoiding intervention. In particular, the $216$ configurations correspond to only $24$ distinct $(a_1,\underline y)$ pairs. All configurations sharing the same $(a_1,\underline y)$ give numerically identical outcomes when $(a_2,a_3)$ varies. The liquidity requirement is nonbinding because of the direct leverage ceiling, leaving $a_1$ as the only effective external regulatory instrument. We therefore hold $(a_2,a_3)=(0.05,0.25)$ fixed in the following $\pi\leq 1$ experiments and isolate the separate effects of the solvency requirement $a_1$ and the intervention trigger $\underline y$. Because the liquidity parameters are inactive in this regime, fixing them does not restrict the optimization over effective regulatory instruments. All experiments here use $T=5, y_0=1.2$ and $5,000$ Monte Carlo paths. Under $\pi\leq 1$ the $\underline y$ grid ranges from $1.01$ to $1.18$ with spacing $0.005$. The full $\kappa$ and $\kappa'$ sensitivity grids are documented in the archive. \par 
\subsubsection{Coordinated versus delegated intervention threshold}
\label{subsec_a1_y}
We next ask whether the bank would voluntarily choose the intervention threshold selected by a coordinated policy. For a fixed $\bar N$, we define an optimized pair $\left(a_1^C,\underline y^C\right) =  \arg\max_{a_1,\underline y}\left\{v(a_1,\underline y):\E[N_T]\leq\bar N\right\}$, where $(a_2,a_3)=(0.05,0.25)$ is fixed and $\pi_{\max}=1$. After determining $(a_1^C,\underline y^C)$, we hold the coordinated solvency requirement $a_1^C$ fixed and allow shareholders to choose their
preferred trigger, $\underline y^B\in\arg\max_{\underline y} v(a_1^C,\underline y)$. We define the value gain $\delta v: =10^4
\left(v(a_1^C,\underline y^B)-v(a_1^C,\underline y^C)\right)/v(a_1^C,\underline y^B)$, and report the results in Table \ref{tabN}. We find a clear disagreement over the intervention threshold $\underline y$: the coordinated benchmark selects a trigger between $1.010$ and $1.015$, whereas the bank chooses a trigger between $1.045$ and $1.070$. This choice is costly in terms of intervention frequency: the bank recapitalizes more often for only a modest gain in value. For example, at $\bar N = 0.5$, the bank chooses $\underline y^B=1.070$, increasing the simulated intervention count to $7.546$. The value gain, however, is only $20.25$ bp. To understand the economic significance of this change in intervention frequency, we must also examine the size of the recapitalizations.
\begin{table}[H]
\centering
\small
\setlength{\tabcolsep}{4pt}
\begin{tabular}{c|rrr|rrr}
\hline\hline
$\bar N$
& $a_1^C$
& $\underline y^C$
& $\E[N_T]$ (95\% CI)
& $\underline y^B$
& $\E[N_T]$ (95\% CI)
& $\delta v$
\\
\hline
0.50 & 0.100 & 1.015 & 0.310 [0.295,0.324] & 1.070 &  7.560 [7.442,7.651] & 20.25 \\
1.00 & 0.080 & 1.015 & 0.885 [0.861,0.909] & 1.060 & 16.109 [15.917,16.300] & 35.05 \\
2.00 & 0.060 & 1.010 & 1.596 [1.563,1.629] & 1.045 & 18.038 [17.841,18.234] & 59.04 \\
\hline\hline
\end{tabular}
\caption{Coordinated and delegated intervention thresholds.}\label{tabN}
\end{table}
We therefore hold the parameters fixed and vary only the intervention threshold. This isolates the effect of $\underline y$ on the three margins of recapitalization. Define $E_{iss} = \E[\sum_{n}e^{-\rho_L\tau_n}\xi_{n}]$ as the total discounted issuance amount. The constraint is on $\E[N_T]$, not $E_{iss}$. The later is reported as a comparative static outcome rather than the formal constraint. In the following two experiments, table $(a)$ varies the trigger at fixed external regulation $(a_1,a_2,a_3)=(0.08,0.05,0.25)$. Table $(b)$ varies the solvency requirement and evaluates outcomes at the bank-selected trigger.

\begin{table}[H]
\centering
\small

\begin{minipage}{0.72\textwidth}
\centering
\textbf{(a) Trigger effect with fixed regulation}

\vspace{0.1in}
\setlength{\tabcolsep}{4pt}
\begin{tabular}{c|rrr}
\hline\hline
$\underline y$
& $\E[N_T]$ (95\% CI)
& $\xi_{post}$
& $E_{iss}$ (95\% CI)
\\
\hline
1.010
& $0.523[0.504,0.541]$
& 0.0664
& $0.0266[0.0257,0.0275]$
\\
1.030
& $2.251[2.209,2.293]$
& 0.0462
& $0.0819[0.0804,0.0835]$
\\
1.050
& $6.382[6.296,6.468]$
& 0.0260
& $0.1318[0.1300,0.1336]$
\\
1.060
& $16.109[15.917,16.300]$
& 0.0108
& $0.1384[0.1367,0.1400]$
\\
1.080
& $34.761[34.442,35.081]$
& 0.0059
& $0.1632[0.1617,0.1648]$
\\
1.100
& $35.197[34.871,35.524]$
& 0.0061
& $0.1710[0.1694,0.1726]$
\\
\hline\hline
\end{tabular}
\end{minipage}

\vspace{0.25in}
\begin{minipage}{0.78\textwidth}
\centering
\textbf{(b) Solvency effect with bank response}
\vspace{0.1in}
\setlength{\tabcolsep}{4pt}
\begin{tabular}{c|rrrr}
\hline\hline
$a_1$
& $\underline y^B$
& $\E[N_T]$ (95\% CI)
& $\xi_{post}$
& $E_{iss}$ (95\% CI)
\\
\hline
0.045
& 1.030
& $17.465[17.272,17.658]$
& 0.0105
& $0.1460[0.1443,0.1476]$
\\
0.060
& 1.045
& $18.038[17.841,18.234]$
& 0.0107
& $0.1530[0.1513,0.1547]$
\\
0.080
& 1.060
& $16.109[15.917,16.300]$
& 0.0108
& $0.1384[0.1367,0.1400]$
\\
0.100
& 1.070
& $7.546[7.442,7.651]$
& 0.0211
& $0.1264[0.1246,0.1282]$
\\
0.120
& 1.085
& $6.573[6.473,6.673]$
& 0.0213
& $0.1107[0.1090,0.1124]$
\\
0.150
& 1.095
& $3.274[3.205,3.343]$
& 0.0265
& $0.0681[0.0667,0.0696]$
\\
\hline\hline
\end{tabular}
\end{minipage}
\caption{Frequency, size, and aggregate amount of recapitalization.}
\label{tab_recap}
\end{table}

Panel $(a)$ identifies the pure trigger effect. Raising $\underline y$ makes recapitalizations more frequent but smaller: $\xi_{post}$ falls from $0.0664$ at $\underline y=1.01$ to $0.0059$ at $\underline y=1.08$, while $\E[N_T]$ rises sharply. Expected discounted issuance also increases, from $0.0266$ at $\underline y=1.01$ to $0.1384$ at $\underline y=1.06$. A higher trigger therefore generates frequent but small recapitalizations and a greater aggregate need for external equity. Panel $(b)$ studies how recapitalization changes when $a_1$
is tightened and the bank can optimize its trigger again. From $a_1=0.06$ to $a_1=0.15$, $\E[N_T]$ falls from $18.038$ to $3.274$, while $\xi_{post}$ rises from $0.0107$ to $0.0265$ and expected discounted issuance falls from $0.153$ to $0.0681$. \par 
This comparison is particularly informative because the endogenous trigger response works in the opposite direction. Stronger solvency regulation along the bank's best-response curve yields fewer but larger recapitalizations. Even after allowing the bank to respond optimally by raising $\underline y$, a higher $a_1$ substantially reduces both recapitalization frequency and aggregate reliance on external equity. Section \ref{regulator_optimization} showed that changing $a_1$ while holding $\underline y$ fixed can substantially affect shareholder value. When the bank is allowed to optimize $\underline y$ as $a_1$ increases, however, the change in value is minimal. Thus, the solvency requirement $a_1$ remains the primary effective regulatory instrument. Optimizing $\underline y$ can nevertheless increase both recapitalization frequency and aggregate discounted issuance while reducing the size of each capital injection. Delegation is therefore inexpensive in shareholder-value terms, but it can materially affect the bank's recapitalization behavior. Whether to delegate the choice of the intervention threshold should depend on the regulator's objectives.

\subsection{Recapitalization costs and intervention discretion}
\label{subsec_kappa}
Section \ref{subsec_a1_y} showed that the bank generally prefers a higher intervention threshold than the coordinated threshold. A natural question is whether this preference is robust to the cost of recapitalization. The two issuance-cost parameters affect recapitalization differently. Recall that for a post-issuance target $z$, the amount of new equity required to move from $y$ is $\xi(y,z,\kappa, \kappa') = \frac{z-y+\kappa(y-1)}{1-\kappa'}$. Clearly, $ \frac{\partial\xi}{\partial\kappa} = \frac{y-1}{1-\kappa'}>0$ and $\frac{\partial\xi}{\partial\kappa'}=\frac{\xi}{1-\kappa'}>0$. This mechanical effect should be distinguished from the bank's optimized intervention threshold $\underline y^B$, which is the more informative object to examine. We study how this threshold varies with $\kappa$ and $\kappa'$ under $\pi\leq1$, holding $(a_2,a_3)=(0.05,0.25)$ fixed and varying $a_1$. The results are shown in Figure \ref{fig_k}, where panel $(a)$ holds $\kappa' = 0.02$ fixed and panel $(b)$ holds $\kappa=0.01$ fixed.
\begin{figure}[H]
    \centering
    \includegraphics[width=0.8\textwidth]
    {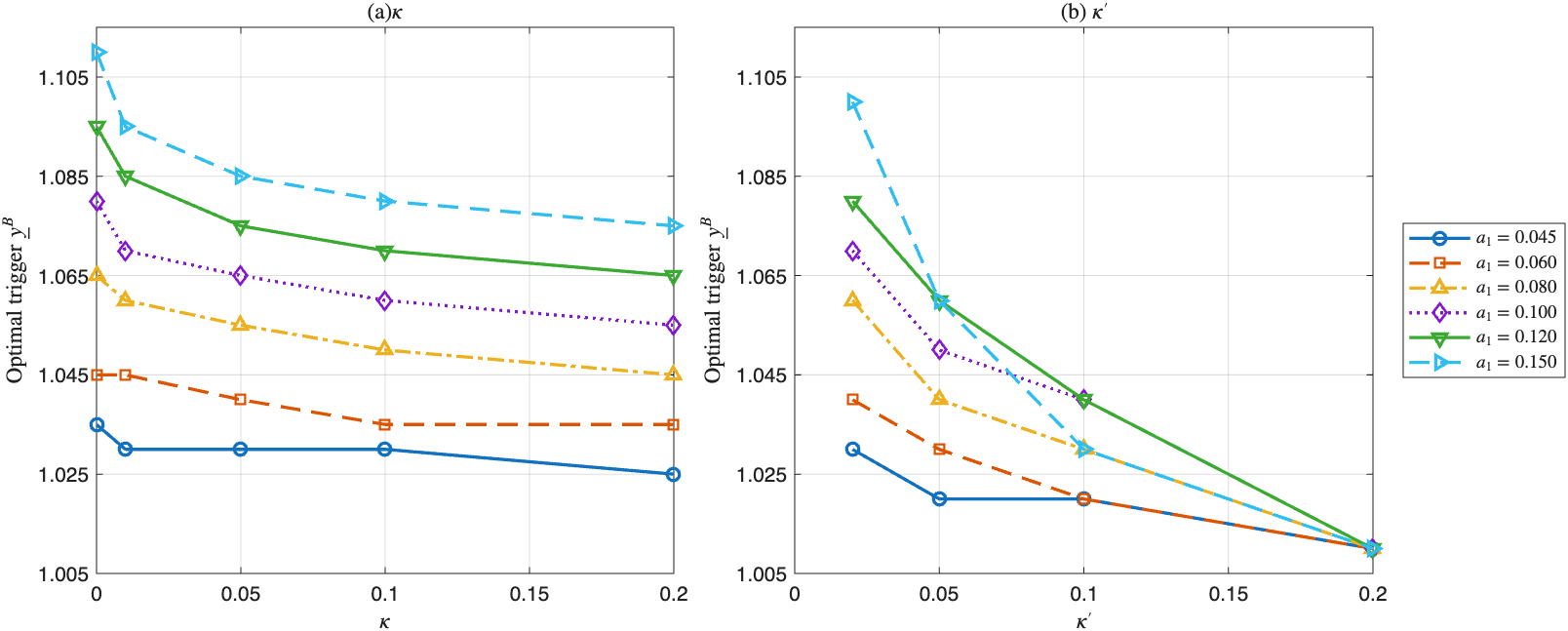}
    \caption{The preferred intervention threshold $\underline y^B$ and recapitalization costs.}
    \label{fig_k}
\end{figure}
As Figure \ref{fig_k}$(a)$ shows, the monotonicity of $\underline y^B$ in $a_1$ observed in Section \ref{subsec_a1_y} is stable across different values of $\kappa$. In addition, increasing
$\kappa$ shifts $\underline y^B$ downward for each $a_1$. Figure \ref{fig_k}$(b)$ shows a different interaction with $\kappa'$. For every fixed $a_1$, increasing $\kappa'$ lowers the bank's optimized threshold, and the response is substantially stronger than in panel $(a)$. The decline is not uniform across solvency requirements, however: the $\underline y^B$ curves cross as $\kappa'$ increases. For example, at the very high cost $\kappa'=0.20$, all six tested solvency cases select the lowest value, $\underline y^B= 1.01$, on the numerical grid. Consequently, there is no global ordering of $\underline y^B$ in $a_1$ once $\kappa'$ varies. The effect of $a_1$ on the intervention threshold is therefore conditional on relatively low recapitalization costs. For banks with higher costs, the effect of regulation is dominated by the effect of $\kappa'$. \par
Table \ref{tab_kappa} complements the figure by showing how recapitalization costs affect recapitalization frequency and aggregate external issuance. We report low, medium, and high solvency requirements and compare the baseline-cost and high-cost cases.
\begin{table}[H]
\centering
\small

\begin{minipage}{0.82\textwidth}
\centering
\textbf{$(a)$: varying $\kappa$}

\vspace{0.1in}
\setlength{\tabcolsep}{5pt}
\begin{tabular}{cc|rrr}
\hline\hline
$a_1$
& $\kappa$
& $\underline y^B$
& $\E[N_T]$ (95\% CI)
& $E_{iss}$ (95\% CI)
\\
\hline
0.045 & 0.01 & 1.030
& $17.465[17.272,17.658]$
& $0.14595[0.14433,0.14758]$
\\
      & 0.20 & 1.025
& $8.702[8.591,8.812]$
& $0.17642[0.17416,0.17868]$
\\
\hline
0.080 & 0.01 & 1.060
& $16.109[15.917,16.300]$
& $0.13837[0.13671,0.14003]$
\\
      & 0.20 & 1.045
& $4.228[4.164,4.292]$
& $0.15044[0.14815,0.15274]$
\\
\hline
0.150 & 0.01 & 1.095
& $3.274[3.205,3.343]$
& $0.06811[0.06666,0.06957]$
\\
      & 0.20 & 1.075
& $1.099[1.070,1.128]$
& $0.06558[0.06383,0.06733]$
\\
\hline\hline
\end{tabular}
\end{minipage}

\vspace{0.25in}

\begin{minipage}{0.82\textwidth}
\centering
\textbf{$(b)$: varying $\kappa'$}

\vspace{0.1in}
\setlength{\tabcolsep}{5pt}
\begin{tabular}{cc|rrr}
\hline\hline
$a_1$
& $\kappa'$
& $\underline y^B$
& $\E[N_T]$ (95\% CI)
& $E_{iss}$ (95\% CI)
\\
\hline
0.045 & 0.02 & 1.030
& $17.465[17.272,17.658]$
& $0.14595[0.14433,0.14758]$
\\
      & 0.20 & 1.010
& $5.329[5.166,5.492]$
& $0.02585[0.02505,0.02665]$
\\
\hline
0.080 & 0.02 & 1.060
& $16.109[15.917,16.300]$
& $0.13837[0.13671,0.14003]$
\\
      & 0.20 & 1.010
& $0.543[0.514,0.572]$
& $0.00509[0.00482,0.00537]$
\\
\hline
0.150 & 0.02 & 1.095
& $3.274[3.205,3.343]$
& $0.06811[0.06666,0.06957]$
\\
      & 0.20 & 1.010
& $0.0010[0,0.0020]$
& $4.37\times10^{-6}[0,8.87\times10^{-6}]$
\\
\hline\hline
\end{tabular}
\end{minipage}
\caption{Preferred intervention thresholds under different solvency requirements and recapitalization costs.}
\label{tab_kappa}
\end{table}
Table \ref{tab_kappa} shows that increasing $\kappa$ substantially reduces recapitalization frequency but does not systematically reduce total issuance. The response to $\kappa'$ is much stronger. Increasing $\kappa'$ from
$0.02$ to $0.20$ lowers the preferred intervention threshold to $1.01$ in all three solvency cases and sharply reduces both total issuance and recapitalization frequency.

The $\kappa$ and $\kappa'$ experiments clarify when the bank's choice of the intervention threshold is most important. When external recapitalization is inexpensive, the bank prefers a relatively high trigger, which leads to more frequent intervention. When recapitalization becomes more costly, the bank lowers its preferred trigger, and both recapitalization frequency and total issuance fall sharply. This response is particularly strong for the proportional cost $\kappa'$. As argued in Section \ref{subsec_a1_y}, the bank's preference for a higher intervention threshold can conflict with a regulatory objective that seeks to limit recapitalization frequency $\E[N_T]$, especially when recapitalization costs are low. This finding has a simple implication for delegation. For banks with lower recapitalization costs, regulators might retain control over the intervention threshold or restrict it when setting the regulatory requirements. For banks with sufficiently high recapitalization costs, private incentives already lead to a lower $\underline y^B$. Delegating the choice of $\underline y$ is therefore less likely to conflict
with the objective of limiting the frequency of recapitalization.
Of course, this depends on the regulator's objective. If the regulator evaluates the intervention threshold primarily through its effect on the value function, then the motivation for delegating $\underline y$ is stronger.

These results should be viewed as a first step toward a more complete Stackelberg analysis. In the present experiment, the regulator chooses the external regulatory vector, while the bank responds only through its intervention trigger on a finite grid. The next step is to formulate the leader--follower problem in continuous strategy spaces, establish the existence and regularity of the bank's response correspondence, and compare the delegated outcome with the control problem in which the regulator chooses the intervention level exogenously.
\section{Appendix}\label{appendix}
\subsection*{Proof of Proposition \ref{thm1}}
\begin{proof}
 Let $(\ell,x)\in\mathcal{S}\setminus \{0\}$ and
${\widehat{\alpha}}=((\tau_n)_{n\in\mathbb{N}^*},(\widehat{\xi}_n)_{n\in\mathbb{N}^*},\widehat{Z},\pi)\in\widehat{\mathcal{A}}.$
We define an increasing and c\`adl\`ag process by
$Z_t=\int_0^tL_u^{-1}\, d\widehat{Z}_u$ for $t\geq 0$. Define a
sequence of $\mathcal{F}_{\tau_n}$-measurable random variables by
$\xi_n=\frac{\widehat{\xi}_n}{L_{\tau_n}}$ for $n\in\mathbb{N}^*$. Using the identities
$F_{\tau_n^-} = X_{\tau_n^-} - L_{\tau_n} = (Y_{\tau_n^-}-1) L_{\tau_n}$ and $d \hat{Z}_t = L_t dZ_t$, we can write
$$
\widehat{J}^{\widehat{\alpha}}(l,x)  =
\mathbb{E}_{l,x}\Big[\int_0^{{T}^{\widehat{\alpha}}}e^{-\rho
t}L_{t}d{Z}_t-\sum_{n=1}^{\infty}e^{-\rho\tau_n}L_{\tau_n}({\xi}_n-\kappa(Y_{\tau_n^-}-1))\1_{\lbrace
\tau_n\le T^{\widehat{\alpha}}\rbrace}+e^{-\rho T^{\widehat \alpha}}L_{T^{\widehat \alpha}}(\underline y-1)\Big].$$ 
Since the process $L$ is a geometric Brownian motion, it admits the representation 
$L_t  = l\exp\left(\left(\mu_L - \frac{1}{2}\sigma_L^2\right)t + \sigma_L W_t\right)$. Define a process $M$ by
$M_t:=\frac{L_t}{l} e^{-\mu_L t} = \exp(\sigma_L W_t -\frac{1}{2}\sigma_L^2 t)$. Then $M$ is a
 positive martingale. 
For $m\geq1$, define $\theta_m:=T^{\widehat\alpha}\wedge m$ and consider the truncated payoff
\begin{align*}
\widehat J_m^{\widehat\alpha}(l,x)
:=&\E_{l,x}\Bigg[
\int_0^{\theta_m}e^{-\rho t}L_t\,dZ_t
-\sum_{n=1}^{\infty}e^{-\rho\tau_n}L_{\tau_n}
\big(\xi_n-\kappa(Y_{\tau_n^-}-1)\big)
\1_{\{\tau_n\leq\theta_m\}}\\
&+e^{-\rho T^{\widehat\alpha}}
L_{T^{\widehat\alpha}}(\underline y-1)
\1_{\{T^{\widehat\alpha}\leq m\}}
\Bigg]\\
=&l\,\E_{l,x}\Bigg[
\int_0^{\theta_m}e^{-\rho_Lt}M_t\,dZ_t
-\sum_{n=1}^{\infty}e^{-\rho_L\tau_n}M_{\tau_n}
\big(\xi_n-\kappa(Y_{\tau_n^-}-1)\big)
\1_{\{\tau_n\leq\theta_m\}}\\
&+e^{-\rho_LT^{\widehat\alpha}}
M_{T^{\widehat\alpha}}(\underline y-1)
\1_{\{T^{\widehat\alpha}\leq m\}}
\Bigg].
\end{align*}
Since $\theta_m$ is bounded, integration by parts and optional sampling give $\E\left[
\int_0^{\theta_m}e^{-\rho_Lt}M_t\,dZ_t\right]=
\E\left[M_{\theta_m}\int_0^{\theta_m}e^{-\rho_Lt}\,dZ_t\right]$, and for each $n$, \[\E\left[e^{-\rho_L\tau_n}M_{\tau_n}
\big(\xi_n-\kappa(Y_{\tau_n^-}-1)\big)\1_{\{\tau_n\leq\theta_m\}}\right]=\E\left[M_{\theta_m}e^{-\rho_L\tau_n}\big(\xi_n-\kappa(Y_{\tau_n^-}-1)\big)\1_{\{\tau_n\leq\theta_m\}}\right].\]
Moreover, on $\{T^{\widehat\alpha}\leq m\}$ we have $\theta_m=T^{\widehat\alpha}$. Hence
\begin{align*}
\widehat J_m^{\widehat\alpha}(l,x)
=l\,\E_{l,x}\Bigg[
M_{\theta_m}\Bigg(&
\int_0^{\theta_m}e^{-\rho_Lt}\,dZ_t
-\sum_{n=1}^{\infty}e^{-\rho_L\tau_n}
\big(\xi_n-\kappa(Y_{\tau_n^-}-1)\big)
\1_{\{\tau_n\leq\theta_m\}}\\
&+e^{-\rho_LT^{\widehat\alpha}}(\underline y-1)
\1_{\{T^{\widehat\alpha}\leq m\}}
\Bigg)\Bigg].
\end{align*}
We now introduce a new probability measure $\P^*$ by $\left.\frac{d\P^*}{d\P}\right|_{\mathcal F_t}=M_t$.
Since the expression in parentheses is
$\mathcal F_{\theta_m}$-measurable, optional sampling gives $\E[M_{\theta_m}H]=\E^*[H]$ for every integrable $\mathcal F_{\theta_m}$-measurable random variable $H$.
Therefore,
\begin{align*}
\widehat J_m^{\widehat\alpha}(l,x)
=l\,\E_y^*\Bigg[&
\int_0^{\theta_m}e^{-\rho_Lt}\,dZ_t
-\sum_{n=1}^{\infty}e^{-\rho_L\tau_n}
\big(\xi_n-\kappa(Y_{\tau_n^-}-1)\big)
\1_{\{\tau_n\leq\theta_m\}}\\
&+e^{-\rho_LT^{\widehat\alpha}}(\underline y-1)
\1_{\{T^{\widehat\alpha}\leq m\}}
\Bigg].
\end{align*}
Under $\mathbb{P}^*$, the processes $W^*$ and $B^*$ defined by $W_t^*:  = W_t - \sigma_L t,\quad B_t^*: = B_t - c\sigma_L t$ are standard Brownian motions with correlation coefficient $c$. We may therefore express the dynamics in terms of $(W^*, B^*)$. Let $Y^{\widehat{\alpha}}$ denote the asset-to-deposit ratio defined by
$Y^{\widehat{\alpha}}_t: =\frac{X^{\widehat{\alpha}}_t}{L_t}$.
For $\tau_n<t<\tau_{n+1}$, It\^o's formula gives
\begin{eqnarray*}
dY^{\widehat{\alpha}}_t & = &
\frac{dX^{\widehat{\alpha}}_t}{L_t}-\frac{X^{\widehat{\alpha}}_td
L_t}{(L_t)^2}+\frac{X^{\widehat{\alpha}}_td\langle
L\rangle_t}{(L_t)^3}+d\langle
X^{\widehat{\alpha}}, L^{-1}\rangle_t\\
& = & \left( Y^{\widehat{\alpha}}_t\left[(1-\pi_t)r+\pi_t\mu \right]+\gamma \right)dt + \pi_t\sigma Y^{\widehat{\alpha}}_t dB_t + \sigma_LdW_t - dZ_t\\
&&-Y^{\widehat{\alpha}}_t \mu_L dt - \sigma_L Y^{\widehat{\alpha}}_t dW_t + \sigma_L^2Y^{\widehat{\alpha}}_t dt - c\pi_t \sigma\sigma_L Y^{\widehat{\alpha}}_tdt - \sigma_L^2 dt \\
& = & \left(Y^{\widehat{\alpha}}_t\left[(1-\pi_t)r+\pi_t\mu -\mu_L + \sigma_L^2 -c\pi_t \sigma\sigma_L\right]+ \gamma - \sigma_L^2\right)dt \\
&&+ \pi_tY^{\widehat{\alpha}}_t\sigma dB_t + \sigma_L(1-Y^{\widehat{\alpha}}_t) dW_t -dZ_t.\end{eqnarray*}
By Girsanov's theorem, under the measure $\mathbb{P}^*$, the process $Y^{\widehat{\alpha}}_t$ satisfies
\begin{eqnarray*}
dY^{\widehat{\alpha}}_t & = & \left( Y^{\widehat\alpha}_t\left[(1-\pi_t)r+\pi_t\mu -\mu_L\right]+\gamma \right)dt\\
&  & +\pi_tY^{\widehat{\alpha}}_t\sigma d(B_t-c\sigma_Lt)+\sigma_L(1-Y^{\widehat{\alpha}}_t)d(W_t-\sigma_Lt)-d{Z}_t\\
& = & \left( Y^{\widehat{\alpha}}_t\left[(1-\pi_t)r+\pi_t\mu -\mu_L\right]+\gamma \right)dt\\
&  & +\pi_tY^{\widehat{\alpha}}_t\sigma dB_t^*+\sigma_L(1-Y^{\widehat{\alpha}}_t)dW_t^*-d{Z}_t
\end{eqnarray*}
for $\tau_n<t<\tau_{n+1}$.
Moreover, at issuance times $\tau_n$, we have
$X_{\tau_n}  = X_{\tau_n^-} + (1-\kappa^\prime)\widehat{\xi}_n-\kappa F_{\tau_n^-}$. Using $F_{\tau_n^-} = (Y^{\widehat{\alpha}}_{\tau_n^-} - 1)L_{\tau_n}$ and $\widehat{\xi}_n = {\xi}_nL_{\tau_n}$, this becomes
\[X_{\tau_n}  = Y^{\widehat{\alpha}}_{\tau_n^-} L_{\tau_n} + (1-\kappa^\prime){\xi}_nL_{\tau_n}-\kappa (Y^{\widehat{\alpha}}_{\tau_n^-} - 1)L_{\tau_n}. \]
Equivalently, $Y^{\widehat{\alpha}}_{\tau_n} = (1-\kappa )Y^{\widehat{\alpha}}_{\tau_n^-}+ (1-\kappa^\prime){\xi}_n+ \kappa$. Define the control set $\mathcal{A}$ by
{\small\[ {\mathcal{A}}=\{{{\alpha}} :=
((\tau_n)_{n\in\mathbb{N}^*},({\xi}_n)_{n\in\mathbb{N}^*},{Z},
\pi):\ \forall t\geq 0:\ 0\leq \pi_t\leq\overline{\pi}(Y_t^\alpha),\
\forall n\geq 1:\
{\xi}_n>\frac{\kappa}{1-\kappa^\prime}(Y^\alpha_{\tau_n^-}-1)\}. \]}
For each $\widehat{\alpha} \in \widehat{{\mathcal{A}}}$, there exists an $\alpha \in {\mathcal{A}}$ such that $Y^{\widehat{\alpha}} = Y^{\alpha}$. Finally, we observe that the admissibility condition on $\widehat{\xi}_n$ is equivalent to the corresponding condition on ${\xi}_n$: $$\{\widehat{\xi}_n>\frac{\kappa}{1-\kappa^\prime}F_{\tau_n^-}\} = \{{\xi}_n>\frac{\kappa}{1-\kappa^\prime}(Y^{\widehat{\alpha}}_{\tau_n^-}-1)\}.$$ Moreover, the intervention times coincide:
$T^\alpha=T^{\widehat{\alpha}}$. Finally, letting $m\to\infty$ and using the admissibility and integrability conditions yields
\[\widehat{J}^{\widehat{\alpha}}(\ell,x)  = \ell\mathbb{E}^*_{y}\Big[\int_0^{{T}^{\alpha}}e^{-\rho_L
t}d{Z}_t-\sum_{n=1}^{\infty}e^{-\rho_L\tau_n}({\xi}_n-\kappa(Y^{\alpha}_{\tau_n^-}-1))\1_{\lbrace
\tau_n\le T^{\alpha}\rbrace}+ e^{-\rho_L T^{\alpha}}(\underline y-1)\Big].\]
Taking the supremum over the admissible controls yields
\[\widehat v(\ell,x)=\sup_{\widehat\alpha\in\widehat{\mathcal A}}\widehat{J}^{\widehat{\alpha}}(\ell,x)=:\ell v(y)=\ell v\left(\frac{x}{\ell}\right).\]
Since $(B^*, W^*)$ under $\mathbb{P}^*$ has the same joint law as $(B, W)$ under $\mathbb{P}$, passing to the canonical space does not change the distribution of the relevant objects. To simplify notation, we therefore write $(B,W)$ instead of $(B^*,W^*)$ and understand all subsequent probabilities as taken under $\mathbb{P}^*$.
\end{proof}

\subsection*{Proof of Proposition \ref{lower_upperbound}}
We first state a supersolution comparison principle. This result will be used to construct an explicit affine upper bound for the value function.
\begin{Lemma} \label{Prop_ineq}
Let $\varphi\in\mathcal{C}^2(\lbrack \underline y,+\infty))$ such that $\varphi(\underline y)\geq \max(\underline y-1,\mathcal H \varphi(\underline y)) $ and
\beq \label{ineq_comparison} \min\left[\ \rho_L \varphi(y) - \Sup_{\pi\in\lbrack0,\overline{\pi}(y)\rbrack}\Lc^\pi \varphi(y);   \varphi^\prime(y)- 1; \varphi(y)-\mathcal{H}\varphi (y)
 \ \right] \geq 0, \quad\textrm{for every }y>\underline y.\enq Then $v \leq
\varphi$ on $\lbrack \underline y,+\infty).$
\end{Lemma}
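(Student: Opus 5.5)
We prove Lemma \ref{Prop_ineq} by a standard verification argument: apply It\^o's formula to $e^{-\rho_L t}\varphi(Y_t^\alpha)$ along an arbitrary admissible control $\alpha\in\mathcal A$, and show that the payoff $J^\alpha(y)$ is bounded by $\varphi(y)$. Fix $y\geq\underline y$ and $\alpha=((\tau_n),(\xi_n),Z,\pi)\in\mathcal A$. Localize with $\theta_m:=T^\alpha\wedge m\wedge\inf\{t: Y^\alpha_t\geq m\}$, so that $Y^\alpha$ is bounded on $[0,\theta_m)$ and the stochastic integrals $\int e^{-\rho_L t}\varphi'(Y_t)(\pi_t\sigma Y_t\,dB_t+\sigma_L(1-Y_t)\,dW_t)$ are true martingales. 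We apply the generalized It\^o formula for semimartingales with jumps between $0$ and $\theta_m$. We separate the continuous part $Z^c$ of $Z$, the dividend jumps $\Delta Z_t$, and the issuance jumps at $\tau_n$.

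The three pieces are controlled as follows.
\begin{enumerate}
\item[(a)] \textbf{Drift term.} This is $e^{-\rho_L t}(\Lc^{\pi_t}\varphi-\rho_L\varphi)(Y_t)\,dt\leq0$, because $\pi_t\in[0,\overline\pi(Y_t)]$ and $\rho_L\varphi\geq\sup_\pi\Lc^\pi\varphi$ on $(\underline y,\infty)$. The set of times when $Y_t=\underline y$ before $T^\alpha$ has zero Lebesgue measure; alternatively one argues by continuity at $\underline y$, since $\varphi\in C^2$ up to $\underline y$.
\item[(b)] \textbf{Dividends.} The continuous part contributes $-\int e^{-\rho_L t}\varphi'(Y_t)\,dZ^c_t\leq-\int e^{-\rho_L t}\,dZ^c_t$, since $\varphi'\geq1$. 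Each dividend jump gives $\varphi(Y_{t})-\varphi(Y_{t^-})=-\int_{Y_t}^{Y_{t^-}}\varphi'\leq-\Delta Z_t$. This uses that jumps keep the state in $[\underline y,\infty)$, where $\varphi'\geq1$ holds by continuity.
\item[(c)] \textbf{Issuance.} At $\tau_n$ we have $\varphi(Y_{\tau_n})-\xi_n+\kappa(Y_{\tau_n^-}-1)\leq\mathcal H\varphi(Y_{\tau_n^-})\leq\varphi(Y_{\tau_n^-})$. The first inequality is the definition of $\mathcal H$; the second uses $\varphi\geq\mathcal H\varphi$ on $(\underline y,\infty)$ together with the boundary hypothesis at $\underline y$. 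Hence $\varphi(Y_{\tau_n})-\varphi(Y_{\tau_n^-})\leq\xi_n-\kappa(Y_{\tau_n^-}-1)$.
\end{enumerate}

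Summing and taking expectations gives
\[
\varphi(y)\geq\E_y\Big[\int_0^{\theta_m}e^{-\rho_L t}dZ_t-\sum_{\tau_n\leq\theta_m}e^{-\rho_L\tau_n}\big(\xi_n-\kappa(Y_{\tau_n^-}-1)\big)+e^{-\rho_L\theta_m}\varphi(Y_{\theta_m})\Big].
\]
Two details need care here. First, a dividend jump exactly at $\theta_m$ must be handled, for instance by taking the value $Y_{\theta_m}$ after the jump. Second, on $\{T^\alpha\leq\theta_m\}$ we have $Y_{T^\alpha}=\underline y$ (the state cannot jump below $\underline y$ by admissibility), and $\varphi(\underline y)\geq\underline y-1$ gives exactly the liquidation payoff.

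Let $m\to\infty$. The localized running terms converge by monotone convergence, using the integrability of the dividend and issuance cash flows assumed in the admissibility definition. The remaining term is $e^{-\rho_L\theta_m}\varphi(Y_{\theta_m})\1_{\{\theta_m<T^\alpha\}}$, and this is the main obstacle. We need its $\liminf$ in expectation to be $\geq0$, or at least $\geq$ a quantity tending to $0$. Since $\varphi(y)\geq\varphi(\underline y)+(y-\underline y)\geq\underline y-1>0$ from $\varphi'\geq1$, the term is nonnegative, so it can simply be dropped. This yields $\varphi(y)\geq J^\alpha(y)$ after using the uniform integrability of the localized payoff. Finally, taking the supremum over $\alpha\in\mathcal A$ gives $v\leq\varphi$.
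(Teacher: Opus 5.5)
Your proposal is correct and is the same verification argument as the paper's. Both apply It\^o's formula to $e^{-\rho_L t}\varphi(Y_t)$ and use the supersolution inequality for the drift, $\varphi'\geq 1$ for dividends, $\varphi\geq\mathcal H\varphi$ at issuance times and $\varphi(\underline y)\geq\underline y-1$ at liquidation, then discard the nonnegative remainder. The differences are organizational. The paper localizes away from $\underline y$, works interval by interval between consecutive issuance times (conditioning on $\mathcal F_{\tau_n}$, Fatou, then summing), and treats $y=\underline y$ afterwards via $v(\underline y)=\max(\underline y-1,\mathcal Hv(\underline y))$ and monotonicity of $\mathcal H$. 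You put all jumps into one It\^o expansion and handle $\underline y$ directly through the hypothesis $\varphi(\underline y)\geq\mathcal H\varphi(\underline y)$, which is slightly cleaner because it does not rely on the boundary identity for $v$.
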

\begin{proof}[\textbf{Proof of Lemma \ref{Prop_ineq}}]
Fix $y>\underline y$ and an arbitrary admissible control
$\alpha\in\Ac$. Let $T^{\alpha}$ denote the corresponding intervention time. Let $\tau_0=0$ and first assume $\tau_1>0$. Fix $n\in\mathbb{N}$ and work on the event $\{\tau_n< T^\alpha\}$. Note that $\{\tau_n= T^\alpha\}=\emptyset$ since $\xi_n>\frac{\kappa}{1-\kappa^\prime}(Y_{\tau_n^-}-1).$ For each integer 
$m\geq 2$, let $\theta_{m,n}$ be the first exit time from $(\underline y+1/m,m)$ after $\tau_n$: 
$$\theta_{m,n}: =\inf\{ t \geq \tau_n: Y_t^{y,\alpha} \geq m \; \mbox{ or } \;
Y_t^{y,\alpha} \leq \underline y + 1/m \}.$$
On the event $\{\tau_n< T^\alpha\}\in\mathcal{F}_{\tau_n}$, we may choose $m$ large enough so that $\tau_n<\theta_{m,n}$. Moreover, $\theta_{m,n}\nearrow T^\alpha$ almost surely as $m\to\infty$. Define $\eta := \theta_{m,n}\wedge\tau_{n+1}$. We now apply It\^o's formula to the process $\left(e^{-\rho_L
t}\varphi(Y_t^{y,\alpha})\right)$ on the time interval $[\tau_n,\eta]$:
\beqs e^{-\rho_L \eta}
\varphi(Y_{(\eta)^-}^{y,\alpha}) &=&
e^{-\rho_L \tau_{n}}\varphi(Y^{y,\alpha}_{\tau_n})
 +\int_{\tau_n}^{\eta} e^{-\rho_L t}(\Lc^\pi \varphi-\rho_L
\varphi)(Y_t^{y,\alpha})dt
\nonumber  \\
& & -  \int_ {\tau_n}^{\eta}  e^{-\rho_L t}  \varphi^\prime(Y_t^{y,\alpha}) dZ^c_t  \nonumber +  \sum_{\tau_n \leq t <  \eta} e^{-\rho_L
t}\left[ \varphi(Y_t^{y,\alpha}) - \varphi(Y_{t^-}^{y,\alpha})\right] \nonumber\\
& &   +   \int_{\tau_n}^{\eta}    e^{-\rho_L t}
\sigma \pi_t Y_t^{y,\alpha}\varphi^\prime(Y_t^{y,\alpha}) dB_t \nonumber+ \int_{\tau_n}^{\eta}    e^{-\rho_L t}
\sigma_L(1- Y_t^{y,\alpha})\varphi^\prime(Y_t^{y,\alpha}) dW_t,  \nonumber
\enqs
where $Z^c$ is the continuous part of the dividend process
$Z$. At any jump time $t\in [\tau_n,\eta)$, we have
$Y_t^{y,\alpha} - Y_{t^-}^{y,\alpha}  = -(Z_t-Z_{t^-})$. Since $\varphi'\geq1$, the mean
value theorem implies that $$\varphi(Y_t^{y,\alpha}) - \varphi(Y_{t^-}^{y,\alpha}) \leq Y_t^{y,\alpha} - Y_{t^-}^{y,\alpha}= -(Z_t-Z_{t^-})$$
for
$\tau_n$ $\leq$ $t$ $<$ $ \eta $. Since
$\varphi$ is a supersolution, we also have 
$\Lc^\pi \varphi(y) - \rho_L \varphi(y)\leq 0$
for every admissible $\pi$. Taking conditional expectations with respect to $\mathcal{F}_{\tau_n}$ and using the fact that the integrands in the stochastic integral terms are bounded by a constant depending on $m$, we obtain, on $\{\tau_n\leq T^\alpha\}$,
\beqs \E\left[
e^{-\rho_L \eta }
\varphi(Y_{( \eta)^-}^{y,\alpha})\vert \mathcal{F}_{\tau_n}\right] & \leq & e^{-\rho_L \tau_{n}}\varphi(Y^{y,\alpha}_{\tau_n}) - \E\left[
\int_{\tau_n}^{ \eta}
e^{-\rho_L t}  dZ_t^c \vert \mathcal{F}_{\tau_n}\right] \\
& & \; - \; \E\left[ \sum_{\tau_n \leq t < \eta  }
e^{-\rho_L t}(Z_t-Z_{t^-})\vert \mathcal{F}_{\tau_n} \right]. \enqs Consequently, $e^{-\rho_L \tau_{n}}\varphi(Y^{y,\alpha}_{\tau_n})  \geq  \E\left[
\int_{\tau_n}^{\eta} e^{-\rho_L t}  dZ_t +
e^{-\rho_L \eta}
\varphi(Y_{(\eta)^-}^{y,\alpha})\vert \mathcal{F}_{\tau_n}  \right]$. Since $\varphi(y)\geq 0$, letting
$m$ tend to infinity and applying Fatou's lemma yields
$$e^{-\rho_L \tau_{n}}\varphi(Y^{y,\alpha}_{\tau_n})\geq \E\left[ \int_{\tau_n}^{\tau_{n+1}\wedge T^\alpha}
e^{-\rho_L t}  dZ_t + e^{-\rho_L \tau_{n+1}\wedge T^\alpha}\varphi(Y_{(\tau_{n+1}\wedge T^\alpha)^-}^{y,\alpha})\vert \mathcal{F}_{\tau_n}\right].$$
On the event $\{\tau_{n+1}\leq T^\alpha\}$, we have
\beqs
\varphi(Y_{\tau_{n+1}^-}^{y,\alpha})  \geq \mathcal{H}\varphi(Y_{\tau_{n+1}^-}^{y,\alpha})
& \geq & \varphi((1-\kappa)Y_{\tau_{n+1}^-}^{y,\alpha}+(1-\kappa^\prime)\xi_{n+1}+\kappa)-\xi_{n+1}+\kappa(Y_{\tau_{n+1}^-}^{y,\alpha}-1)\\
& = & \varphi(Y_{\tau_{n+1}}^{y,\alpha})-\xi_{n+1}+\kappa(Y_{\tau_{n+1}^-}^{y,\alpha}-1).
\enqs
On the complementary event $\{T^\alpha<\tau_{n+1}\}$, we have
\[
Y_{(T^\alpha)^-}^{y,\alpha}\geq \underline y.
\]
Therefore, $\varphi(Y_{(\tau_{n+1}\wedge T^\alpha)^-}^{y,\alpha})\geq \underline y-1$. We then obtain
\beqs
e^{-\rho_L \tau_{n}}\varphi(Y^{y,\alpha}_{\tau_n}) & \geq & \E\left[ \int_{\tau_n}^{\tau_{n+1}\wedge T^\alpha}
e^{-\rho_L t}  dZ_t\vert \mathcal{F}_{\tau_n}\right]+
\E\left[
e^{-\rho_L T^\alpha}(\underline y-1)
\mathbf 1_{\{T^\alpha<\tau_{n+1}\}}
\,\middle|\,\mathcal F_{\tau_n}
\right]\\
& &  + \E\left[e^{-\rho_L \tau_{n+1}}\left(\varphi(Y_{\tau_{n+1}}^{y,\alpha})-\xi_{n+1}+\kappa(Y_{\tau_{n+1}^-}^{y,\alpha}-1)\right)\1_{\{\tau_{n+1}\leq T^\alpha\}}\vert \mathcal{F}_{\tau_n}\right].
\enqs
Now fix $N\geq 0$. Summing the one-step inequalities yields
\begin{align*}
  &  \varphi(Y^{y,\alpha}_0) - \E\left[ e^{-\rho_L \tau_{N+1}}\varphi(Y^{y,\alpha}_{\tau_{N+1}})\1_{\{\tau_{N+1}\leq T^\alpha\}}\right] \\
  \geq    & \sum_{n = 0}^N \left(\E\left[ \int_{\tau_n}^{{\tau_{n+1}}\wedge T^{\alpha}} e^{-\rho_L t}dZ_t - e^{-\rho_L \tau_{n+1}}\left( \xi_{n+1}-\kappa(Y_{\tau_{n+1}^-}^{y,\alpha}-1)\right)\1_{\{\tau_{n+1}\leq T^\alpha\}} \right]\right.\\
  &\qquad+\left.
\E\left[
e^{-\rho_L T^\alpha}(\underline y-1)
\sum_{n=0}^{N}
\mathbf 1_{\{\tau_n\le T^\alpha<\tau_{n+1}\}}
\right] \right).
\end{align*}
 We now distinguish two cases. Assume first that $\tau_1>0$. As $\lim_{n\to\infty}\tau_n=\infty$ and $\rho_L>0$,
$$\E\left[e^{-\rho_L T^\alpha}\sum_{n=0}^{\infty}\1_{\{\tau_n< T^{\alpha}<\tau_{n+1}\}}\right]=\E\left[e^{-\rho_L T^\alpha}\1_{\{ T^{\alpha}<\infty\}}\right]=\E\left[e^{-\rho_L T^\alpha}\right].$$
Therefore, the term $\E\left[
e^{-\rho_L\tau_{N+1}}
\varphi(Y^{y,\alpha}_{\tau_{N+1}})
\mathbf 1_{\{\tau_{N+1}\le T^\alpha\}}
\right]\geq 0$. Letting $N\to \infty$, and using the tower property together with the previous inequalities, we obtain
\beqs
\varphi(y) 
& \geq & \E\left[ \int_{0}^{T^\alpha}
e^{-\rho_L t}  dZ_t-\sum_{n=1}^{\infty}e^{-\rho_L \tau_{n}}\left(\xi_n-\kappa(Y_{\tau_{n}^-}^{y,\alpha}-1)\right)\1_{\{\tau_n\leq T^{\alpha}\}}+ e^{-\rho_L T^{\alpha}}(\underline y-1)
\right]\ .
\enqs
Now assume that $\tau_1=0$. The same arguments as before yield
\beqs
\varphi(y)
& \geq & \E\left[ \int_{0}^{T^\alpha}
e^{-\rho_L t}  dZ_t-\sum_{n=2}^{\infty}e^{-\rho_L \tau_{n}}\left(\xi_n-\kappa(Y_{\tau_{n}^-}^{y,\alpha}-1)\right)\1_{\{\tau_n\leq T^{\alpha}\}}+e^{-\rho_L T^{\alpha}}(\underline y-1)\right].
\enqs
Since $\varphi(y)\geq\mathcal{H}\varphi(y)\geq \varphi(Y_{0}^{y,\alpha})-\xi_1+\kappa(y-1)$, we obtain
$\varphi(y)  \geq J^{\alpha}(y)$ for every control $\alpha$. Taking the supremum over all controls on both sides yields $\varphi(y)\geq v(y)$ for all $y>\underline y$. It follows that
$\mathcal{H}v(\underline y)\leq \mathcal{H}\varphi(\underline y)$, and thus
$$
v(\underline y)  =  \max\left(\underline y-1,\ \mathcal{H}v(\underline y)\right)
 \leq   \max\left(\underline y-1,\ \mathcal{H}\varphi(\underline y)\right) \leq  \varphi(\underline y).$$
\end{proof}
\begin{proof}[\textbf{Proof of Proposition \ref{lower_upperbound}}]
The lower bound follows by considering the strategy that pays an immediate lump-sum dividend of size $Z_0-Z_{0-} = y-\underline y$. For the upper bound, we introduce the function $\varphi(y)=y+K$. By Lemma \ref{Prop_ineq}, it suffices to prove that $\varphi$ is a supersolution of \eqref{HJB}. Assumption \ref{assump1} implies that $\mu^*(z)\leq r+\frac{(\mu-r)^+}{\bar a}<\rho$ for all $z\geq \underline y$, so $M<\infty$ and $K$ is well defined. Since $\varphi$ is affine, we have
$$
\sup_{0\leq \pi\leq \overline{\pi}(y)}\mathcal{L}^\pi \varphi(y) =
\sup_{0\leq \pi\leq
\overline{\pi}(y)}y\left[\mu(\pi)-\mu_L\right]+\gamma  =
y\left[\mu^*(y)-\mu_L\right]+\gamma.$$ 
It follows that $\rho_L\varphi(y)-\sup_{0\leq \pi\leq \overline{\pi}(y)}\mathcal{L}^\pi \varphi(y) = y(\rho - \mu^*(y)) + \rho_L K-\gamma \geq \rho_L K-\gamma -M\geq 0$. Moreover, we have 
$\mathcal{H}\varphi(y)=\sup_{\xi>\frac{\kappa}{1-\kappa^\prime}(y-1)}\left(y+K-\kappa' \xi\right)<\varphi(y)$. Finally, since $K\geq -1$, we have $
\varphi(\underline y)=\underline y+K\ge \underline y-1$, and therefore $\varphi$ satisfies the assumptions of Proposition \ref{Prop_ineq}, which implies $v\leq \varphi$.
\end{proof}

\subsection*{Proof of Proposition \ref{viscosity}}
To prove Proposition \ref{viscosity}, we first state and prove the following lemma.
\begin{Lemma}\label{continuous}
The value function $v$ is nondecreasing and continuous on $\lbrack \underline y,+\infty)$.
\end{Lemma}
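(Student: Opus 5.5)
Monotonicity comes first, since continuity leans on it. For $\underline y\le y_1<y_2$ I take any admissible $\alpha\in\mathcal A$ started from $y_1$ and build a control from $y_2$: pay an immediate lump-sum dividend $y_2-y_1$ at time $0^-$ and then follow $\alpha$. The paths coincide after time $0$. The admissibility constraints ($\pi_t\le\overline\pi(Y_t)$, the minimal issuance size $\xi_n>\frac{\kappa}{1-\kappa'}(Y_{\tau_n^-}-1)$, and the requirement that dividend jumps do not go below $\underline y$) depend only on the path, so they are preserved. The payoff increases by $y_2-y_1\ge0$. Hence $v(y_2)\ge v(y_1)+(y_2-y_1)\ge v(y_1)$, which already gives the one-sided bound $v(y_2)-v(y_1)\ge y_2-y_1$.

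For continuity the remaining task is an upper bound on $v(y_2)-v(y_1)$ that tends to $0$ as $y_2\downarrow y_1$ (right continuity), together with the analogous bound from the left. My approach is a coupling argument. Fix $\varepsilon>0$ and an $\varepsilon$-optimal control $\alpha$ from $y_2$. From $y_1$ I use the same Brownian paths, the same dividends and the same issuance times and sizes. The investment fraction is replaced by $\tilde\pi_t=\min(\pi_t,\overline\pi(\tilde Y_t))$ so that it stays admissible; because $\overline\pi$ is Lipschitz on $[\underline y,\infty)$, the adjustment is controlled by $|Y_t-\tilde Y_t|$. The coefficients of the ratio SDE are Lipschitz in $y$ uniformly in $\pi\in[0,1/\bar a]$, since $\overline\pi$ is bounded. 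Gronwall and Burkholder--Davis--Gundy estimates then give $\E\sup_{t\le T}|Y_t-\tilde Y_t|\le C_T|y_2-y_1|$. Issuance jumps multiply the gap by $(1-\kappa)<1$, so they do not spoil the estimate.

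The main obstacle is the boundary $\underline y$. The coupled process $\tilde Y$ may hit $\underline y$ and be liquidated (or be forced to issue) strictly before $Y$, and the issuance constraint may also fail. My fix is a stopping-time construction: let $\tilde\theta$ be the first time $\tilde Y$ reaches $\underline y$, and at that time use the option to recapitalize by the small amount needed to bring $\tilde Y$ back to $Y$, or else liquidate. The cost of this patch is bounded by $C|Y_{\tilde\theta}-\underline y|$ plus the fixed wedge $\frac{\kappa\kappa'}{1-\kappa'}(\underline y-1)$. I expect this wedge to be the hardest part, because it does not vanish with $|y_2-y_1|$. To handle it I would show that $\P(\tilde\theta<T^\alpha\wedge T)\to0$ as $y_2\to y_1$ when $y_1>\underline y$. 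This uses the nondegenerate diffusion coefficient $\sigma_L(y-1)>0$ near $\underline y>1$: the process $Y$ is a regular diffusion with positive volatility there, so $\inf_t Y_t$ near a hitting time crosses strictly below, and the hitting time is continuous in the initial data. Truncating at a large horizon $T$ then takes care of the rest, with the tail controlled by $e^{-\rho_LT}$ and the linear growth bound $v\le y+K$ of Proposition~\ref{lower_upperbound}. Continuity at $y_1=\underline y$ itself would follow from the same estimate combined with $v(\underline y)=\max(\underline y-1,\mathcal Hv(\underline y))$. In that case, starting slightly above $\underline y$, the process hits $\underline y$ within a short time with probability close to one, again by nondegeneracy.
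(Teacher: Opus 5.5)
\textbf{There is a genuine gap in the continuity step.} Your monotonicity step is correct and matches the paper: pay an immediate dividend of size $y_2-y_1$.

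The gap sits exactly where you flag it. The claim $\P(\tilde\theta<T^\alpha\wedge T)\to0$ as $y_2\downarrow y_1$ fails even for the simplest admissible control. Take no dividends, no issuance and $\pi\equiv0$. Then the gap $D=Y-\tilde Y$ solves $dD_t=(r-\mu_L)D_t\,dt-\sigma_LD_t\,dW_t$, so $\tilde Y_t<Y_t$ for all $t$. On $\{T^\alpha\le T\}$ the lower process therefore reaches $\underline y$ strictly before $Y$. Hence $\P(\tilde\theta<T^\alpha\wedge T)\ge\P_{y_2}(T^\alpha\le T)$, which stays bounded away from zero. Nondegeneracy and continuity of hitting times only show that $T^\alpha-\tilde\theta$ is small, not that the event is rare. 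Moreover, your $\varepsilon$-optimal control depends on $y_2$, so any such estimate would have to hold uniformly over admissible controls. Singular dividends can keep $Y$ within $O(y_2-y_1)$ of $\underline y$ without hitting it, which rules that out.

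On this non-negligible event your patch loses a non-vanishing amount either way:
\begin{itemize}
\item Recapitalizing to $Y_{\tilde\theta}$ costs an extra fixed wedge $\frac{\kappa\kappa'}{1-\kappa'}(\underline y-1)$ relative to $\alpha$.
\item Liquidating forfeits the recapitalization value, which does not vanish when $\mathcal Hv(\underline y)>\underline y-1$.
\end{itemize}
So the coupling bound does not close.

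\textbf{How the coupling could be repaired, and what the paper does instead.} You could stop at $\tilde\theta$ and let the lower process take a near-optimal action from $\underline y$. This bounds the loss by $\E[e^{-\rho_L\tilde\theta}(v(Y_{\tilde\theta})-v(\underline y))\mathbf{1}_{\{\tilde\theta<T^\alpha\wedge T\}}]$, with $Y_{\tilde\theta}-\underline y\le\sup_{t\le T}|Y_t-\tilde Y_t|$. However, this requires right-continuity of $v$ at $\underline y$ to be proved first and independently. Your treatment of that case (``follows from the same estimate'') is circular.

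The missing idea there, which the paper uses, is as follows. An $\eta$-optimal control from $\underline y+\varepsilon$ may issue before reaching $\underline y$. Such an issuance can be matched by issuing from $\underline y$ to the same post-issuance state, at extra cost only $\frac{1-\kappa\kappa'}{1-\kappa'}(Y_{\tau_1^-}-\underline y)$, with no fixed wedge. The dividends paid before $T_{\underline y}\wedge\tau_1$ are then controlled by It\^o's formula for $e^{-\rho_Lt}(Y_t-\underline y)$. This is combined with a hitting-time bound uniform in $\pi$, which is legitimate because dividends only accelerate hitting.

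For interior points the paper needs no coupling at all. From $y$ it runs the fixed feedback $\pi^*$, with no dividends or issuance, until $Y$ hits $y+\varepsilon$ or $\underline y$. The dynamic programming principle plus monotonicity then give
$v(y+\varepsilon)-v(y)\le v(y+\varepsilon)(1-\E[e^{-\rho_LT_\varepsilon}])+(v(y+\varepsilon)-v(\underline y))\P(T_\varepsilon\ge T_{\underline y})$.
Both terms vanish because they concern a single regular diffusion, so no uniformity over controls is required.
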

\begin{proof}[\textbf{Proof of Lemma \ref{continuous}}] We first show that $v$ is continuous on $(\underline y,\infty)$. Fix $y>\underline y$. For $\varepsilon> 0$, define the hitting times $T_{\varepsilon} \;=\; \inf \{ t\geq 0; \: Y_t^{y} = y+\varepsilon \}$ and $T_{\underline y} \;=\; \inf \{ t\geq 0; \: Y_t^{y} = \underline y. \}$. Consider a control $\alpha=((\tau_n)_{n\in\mathbb{N}^*},({\xi}_n)_{n\in\mathbb{N}^*},{Z},\pi^*)\in\Ac$ such that $Z\equiv 0$ and $\tau_1>T_{\varepsilon}$. In other words, no dividends are paid and no capital is issued before $T_{\varepsilon}$. Then $Y_t = Y^{*}_t$ for all $t < \tau_1$. On the event $\{T_{\varepsilon}<T_{\underline y}\}$, we have $Y_{T_{\varepsilon}\wedge T_{\underline y}} = y+\varepsilon$; on $\{T_{\varepsilon}\geq T_{\underline y}\}$, we have $Y_{T_{\varepsilon}\wedge T_{\underline y}} = \underline y$. Applying the dynamic programming principle (DPP) up to time $T_{\underline y}\wedge T_{\varepsilon}$, we obtain
\[v(y) \geq \E \left[ e^{-\rho_L T_{\varepsilon}\wedge T_{{\underline y}}} v(Y_{T_{\varepsilon}\wedge T_{{\underline y}}})\right] = \E \left[ e^{-\rho_L T_{\varepsilon}} v(y+\varepsilon)\1_{T_{\varepsilon}<T_{{\underline y}}}\right] + \E \left[ e^{-\rho_L T_{\underline y}} v({\underline y})\1_{T_{\varepsilon}\geq T_{\underline y}}\right].\]
Since the value function $v$ is nondecreasing, we have \beqs
 0\leq v(y+\varepsilon) - v(y) &\leq& \E \left[ (1- e^{-\rho_L
T_\varepsilon})v(y+\varepsilon) \1_{T_\varepsilon < T_{{\underline y}}} +  (v(y+\varepsilon)-v({\underline y})) \1_{T_{\varepsilon} \geq T_{{\underline y}}} \right]  \\
&\leq& v(y+\varepsilon) \left(1- \E [ e^{-\rho_L T_{\varepsilon}}]\right) +
(v(y+\varepsilon)-v({\underline y}))  \P(T_{\varepsilon} \geq T_{{\underline y}}). \enqs
Now fix $\varepsilon_0>0$, and restrict to $\varepsilon<\varepsilon_0$. Then
\begin{equation}\label{continuity_1}
\begin{aligned}
v(y+\varepsilon) - v(y) \leq{}& v(y+\varepsilon_0)
\left(1- \E [ e^{-\rho_L T_{\varepsilon}}]\right) \\
&+(v(y+\varepsilon_0) - v({\underline y}))
\P(T_{\varepsilon} \geq T_{{\underline y}}).
\end{aligned}
\end{equation}
 Now take $\varepsilon\downarrow 0$. Because $Y$ has continuous paths, $T_{\varepsilon} \longrightarrow 0$ almost surely, and by dominated convergence,
$\E [ e^{-\rho_L T_\varepsilon}] \longrightarrow  1$, as $\varepsilon \downarrow 0$. In addition, the continuity of scale functions of $Y^*$ implies that
$\P(T_\varepsilon \geq T_{{\underline y}})
\longrightarrow 0$, as $\varepsilon\downarrow 0$. 
Since $v$ is finite by Proposition \ref{lower_upperbound},
the right-hand side of \eqref{continuity_1} converges
to zero as $\varepsilon\downarrow 0 $. This proves
the right-continuity of $v$. An analogous argument, starting from $y-\epsilon$ and considering the first hitting time of $\{{\underline y},y\}$, proves the
left-continuity of the value function. We now turn to the right-continuity of $v$ at ${\underline y}$. Let $\varepsilon>0$. By immediately paying dividends down to the distress boundary, we obtain
$v({\underline y}+\varepsilon)\geq v({\underline y})+\varepsilon$. Let $\eta>0$. By \eqref{dpp}, there exists $\alpha\in\mathcal{A}$ such that
\beqs
\label{inequa_eta}
v({\underline y}+\varepsilon) & \leq & \E\Big[\int_0^{T_{\underline y}\wedge\tau_1}e^{-\rho_L t}d{Z}_t + e^{-\rho_L T_{\underline y}} v(\underline y) \1_{\{\tau_1> T_{\underline y}\}}\Big]\\
 & + & \E\Big[ e^{-\rho_L \tau_1}\Big[v\left((1-\kappa)Y_{\tau_1^-}+(1-\kappa^\prime)\xi+\kappa\right)-\xi+\kappa(Y_{\tau_1^-}-1)\Big]\1_{\{\tau_1\leq T_{\underline y}\}}\Big]+\eta.
\enqs
On the event $\{\tau_1\leq T_{\underline y}\}$, we have
$$v({\underline y})\geq \mathcal Hv({\underline y})\geq v\left(Y_{\tau_1}\right)-\tilde{\xi} + \kappa ({\underline y}-1)=v\left((1-\kappa)Y_{\tau_1^-}+(1-\kappa^\prime){\xi}+\kappa\right)-\tilde{\xi}+ \kappa ({\underline y}-1),$$
with $\tilde{\xi}=\xi+\frac{1-\kappa}{1-\kappa^\prime}(Y_{\tau_1^-}-{\underline y})$.
Since $\rho_L>0$ and $v\geq 0$, combining the previous two inequalities yields
\beqs
v({\underline y}+\varepsilon)-v({\underline y}) & \leq & \E\Big[\int_0^{(T_{\underline y}\wedge\tau_1)^-}e^{-\rho_L t}d{Z}_t\Big]\\
 & + & \E\Big[ (e^{-\rho_L \tau_1}-1)\Big[v\left((1-\kappa)Y_{\tau_1^-}+(1-\kappa^\prime)\xi+\kappa\right)\Big]\1_{\{\tau_1\leq T_{\underline y}\}}\Big]\\
 & + & \E\Big[e^{-\rho_L \tau_1}\left(\tilde{\xi}-\xi+\kappa(Y_{\tau_1^-}-1) -\kappa (\underline y -1)\right)\1_{\{\tau_1\leq T_{\underline y}\}}\Big]+\eta\\
 & \leq & \E\Big[\int_0^{(T_{\underline y}\wedge\tau_1)^-}e^{-\rho_L t}d{Z}_t\Big]+\frac{1-\kappa\kappa^\prime}{1-\kappa^\prime}\E\Big[e^{-\rho_L \tau_1}\left(Y_{\tau_1^-}-\underline y\right)\1_{\{\tau_1\leq T_{\underline y}\}}\Big]+\eta\\
 & \leq & \frac{1-\kappa\kappa^\prime}{1-\kappa^\prime}\E\Big[e^{-\rho_L T_{\underline y}\wedge\tau_1}\left(Y_{(T_{\underline y}\wedge\tau_1)^-}-\underline y\right)+\int_0^{(T_{\underline y}\wedge\tau_1)^-}e^{-\rho_L t}d{Z}_t\Big]+\eta
\enqs
 Applying It\^o's formula to the process $e^{-\rho_L t}\left(Y_{t}-\underline y\right)$ on the interval $[0,(T_{\underline y}\wedge\tau_1)^-]$ and substituting the result into the preceding inequality yields
\beqs
v({\underline y}+\varepsilon)-v({\underline y}) & \leq & \frac{1-\kappa\kappa^\prime}{1-\kappa^\prime}\E\Big[\varepsilon+\int_0^{(T_{\underline y}\wedge\tau_1)^-}e^{-\rho_L t}\left(Y_t(\mu(\pi_t)-\rho)+\gamma+\rho_L\underline y\right)\, dt\Big]+\eta.
\enqs
By the standing assumption on $\rho$, we have $\mu(\pi_t)\leq \rho$ and thus $Y_t(\mu(\pi_t) - \rho)\leq 0$. Therefore,
\[v({\underline y}+\varepsilon)-v({\underline y}) \leq  \frac{1-\kappa\kappa^\prime}{1-\kappa^\prime}\left(\varepsilon+\frac{(\gamma+\rho_L\underline y)^+}{\rho_L}\left(1-\E[e^{-\rho_L T}]\right)\right)+\eta\]
where $T=\inf\{ t\geq 0:\ \tilde{Y}_t\leq {\underline y}\}$, and $\tilde{Y}$ solves
$$\left\lbrace\begin{array}{lll}
d\tilde{Y}_t & = & \left((\mu(\pi_t)-\mu_L)\tilde{Y}_t+\gamma\right)dt+\sigma\pi_t\tilde{Y}_tdB_t+\sigma_L(1-\tilde{Y}_t)dW_t\\
\tilde{Y}_0 & = & \underline y+\varepsilon.
\end{array}
\right.$$
Let $T_\varepsilon:=\inf\{t\geq0:\widetilde Y_t\leq\underline y\}$ with $\tilde Y_0=\underline y+\varepsilon$. Choose $\delta>0$. For every $y\in[\underline y, \underline y+\delta]$ and every admissible $\pi$, we have $\sigma_Y^2(y,\pi)=\big(\pi\sigma y+c\sigma_L(1-y)\big)^2+(1-c^2)\sigma_L^2(1-y)^2\geq (1-c^2)\sigma_L^2 (\underline y-1)^2=: \underline \sigma^2>0$. Since $\overline \pi$ is bounded on $[\underline y, \underline y+\delta]$, the drift is uniformly bounded and the diffusion coefficient is uniformly non-degenerate. Standard boundary-hitting estimates then give for every $t>0$, $\sup_{\pi}\P_{\pi}^{\underline y+\epsilon}(T_{\epsilon}>t)\to 0$, and consequently $\sup_{\pi}(1-\E_{\pi}^{\underline y+\epsilon}[e^{-\rho_L T_{\epsilon}}])\to 0$. Substituting this into the preceding estimate and letting $\varepsilon,\eta\downarrow0$ proves the right-continuity of $v$ at $\underline y$.
\end{proof}

\begin{proof}[\textbf{Proof of Proposition \ref{viscosity}}]
   We first show that $v$ is a viscosity supersolution. Take $y_0\in (\underline y,\infty)$, and let $\phi \in C^2(\underline y,\infty)$ be such that $v(y_0) = \phi(y_0)$ and $v-\phi$ attains its local minimum at $y_0$. We must show that 
    $$\min \{ \rho_L \phi(y_0) - \Sup_{0\leq \pi\leq\overline{\pi}(y_0)}\Lc^\pi \phi(y_0); \phi^\prime(y_0)- 1; v(y_0)-\mathcal{H}v(y_0)\}\geq 0.$$
The inequality $v(y_0)\geq \mathcal{H}v(y_0)$ follows by considering immediate issuance. To prove $\phi'(y_0)\geq 1$, fix $\epsilon\in(0,y_0-\underline y)$. Then $v(y_0)\geq v(y_0-\epsilon)+\epsilon$. Locally, we have $\phi(y_0) - \phi(y_0-\epsilon) \geq v(y_0) - v(y_0-\epsilon) \geq \epsilon$,
which implies $\phi'(y_0)\geq 1$. It remains to show that $\rho_L \phi(y_0) -\Sup_{0\leq \pi\leq\overline{\pi}(y_0)}\Lc^\pi \phi(y_0)\geq 0$. Suppose, for contradiction, that $\rho_L \phi(y_0) -\Sup_{0\leq \pi\leq\overline{\pi}(y_0)}\Lc^\pi \phi(y_0)< 0$. Choose a strategy $\pi^*$ under which $\Lc^\pi \phi(y_0)$ attains its maximum, and consider the strategy with no dividends or issuance. Then there exists $\delta>0$ such that $\Lc^{\pi^*} \phi-\rho_L \phi>\delta$ in a neighborhood $U$ of $y_0$. For small $h>0$, define $\tau : = \inf\{t\geq 0: Y_t^{\pi^*}\notin U\}\wedge h$. By the dynamic programming principle \eqref{dpp}, $v(y_0)\geq \mathbb{E}[e^{-\rho_L \tau} v(Y^{\pi^*}_{\tau})]\geq \mathbb{E}[e^{-\rho_L \tau} \phi(Y^{\pi^*}_{\tau})]$.
Applying It\^{o}'s formula shows that the right-hand side satisfies
\[\mathbb{E}[e^{-\rho_L \tau} \phi(Y^{\pi^*}_{\tau})]  >\phi(y_0)+ \delta \mathbb{E}[\int_0^{\tau}e^{-\rho_L t}dt]>\phi(y_0) = v(y_0).
\]
This contradiction shows that $\rho_L \phi(y_0) -\Sup_{0\leq \pi\leq\overline{\pi}(y_0)}\Lc^\pi \phi(y_0)\geq0$. We next show that $v$ is a viscosity subsolution. Let $\phi\in C^2(\underline y,\infty)$ such that $v-\phi$ attains a local maximum at $y_0$ and $v(y_0) = \phi(y_0)$. We must show that 
    $$\min \{ \rho_L \phi(y_0) - \Sup_{0\leq \pi\leq\overline{\pi}(y_0)}\Lc^\pi \phi(y_0); \phi^\prime(y_0)- 1; v(y_0)-\mathcal{H}v(y_0)\}\leq 0.$$
Suppose, for contradiction, that 
$$\rho_L \phi(y_0) - \Sup_{0\leq \pi\leq\overline{\pi}(y_0)}\Lc^\pi \phi(y_0)>0,\enskip \phi^\prime(y_0)- 1>0,\enskip  v(y_0)-\mathcal{H}v(y_0)>0.$$
Then there exist $\delta>0$ and a neighborhood $U$ of $y_0$ such that for every $y\in U$, 
\[\rho_L\phi(y) -\sup_{0\leq\pi\leq\overline\pi(y)} \Lc^\pi\phi(y)\geq\delta, \quad\phi'(y)-1\geq\delta,\quad v(y)-\mathcal Hv(y)\geq\delta.\] 
Take an $\varepsilon$-optimal strategy $\alpha^{\varepsilon}$, let $Y^{\varepsilon}$ be the state process, and denote its first issuance time by $\tau_1^{\varepsilon}$. Let $\sigma:=\inf\{t\geq 0:Y_t^\varepsilon\notin U\}$ and $\theta:=\sigma\wedge\tau_1^\varepsilon\wedge h$. By DPP and It\^o's formula up to $\theta$, the drift and dividend terms
are bounded above by $-\delta\E\!\left[\int_0^\theta e^{-\rho_Lt}\,dt\right]-\delta\E\!\left[\int_0^\theta e^{-\rho_Lt}\,dZ_t\right]$.
If $\theta=\tau_1^\varepsilon$, since $Y_{\tau_1^\varepsilon-}^\varepsilon\in U$,
$v(Y_{\tau_1^\varepsilon}^\varepsilon)-\xi_1+\kappa(Y_{\tau_1^\varepsilon-}^\varepsilon-1)\leq\mathcal Hv(Y_{\tau_1^\varepsilon-}^\varepsilon)\leq v(Y_{\tau_1^\varepsilon-}^\varepsilon)-\delta $. Thus the impulse is controlled directly through the strict obstacle inequality, without evaluating $\phi$ at the post-issuance state.
Consequently, for sufficiently small $\varepsilon$, DPP yields $v(y_0)<\phi(y_0)=v(y_0)$, contradiction. Hence $v$ is a viscosity subsolution. Consequently, $v$ is a viscosity solution to the variational inequality \eqref{HJB}.
To show uniqueness, let $u$ be an upper semicontinuous viscosity subsolution and $w$ a lower semicontinuous viscosity supersolution of the variational inequality. It suffices to show that $u\leq w$. We adapt a perturbation argument similar to that in \cite{Seydel}. Let $h(x) = k_0x+C$, where $1<k_0<\frac{1}{1-\kappa'}$ and $C$ is sufficiently large. Then there exists $\tilde C>0$ such that $\min \{ \rho_L h - \Sup_{0\leq \pi\leq\overline{\pi}}\Lc^\pi h; h^\prime- 1; h-\mathcal{H}h\}>\tilde C$. Let $w^{\epsilon} = \epsilon h+(1-\epsilon )w$; then $w^{\epsilon}$ is a strict supersolution. Suppose, for contradiction, that $M^{\epsilon}:= \sup_{y\geq \underline y} (u-w^{\epsilon})(y)>0$. By the linear growth condition, $u(y)-y$ and $w(y)-y$ are bounded, then $u-w^{\epsilon}\to -\infty$ as $y\to \infty$, and the supremum is attained at some $\tilde y$. If $\tilde y=\underline y$, then $M^\epsilon=u(\underline y)-w^\epsilon(\underline y)>0$. Since $w^\epsilon(\underline y)\geq\underline y-1$, we have $u(\underline y)>\underline y-1$. The subsolution boundary condition implies $u(\underline y)\leq\mathcal Hu(\underline y)$. On the other hand, the strict supersolution property gives $w^\epsilon(\underline y)\geq\mathcal Hw^\epsilon(\underline y)+\epsilon\tilde C$. Since $u\leq w^\epsilon+M^\epsilon$, monotonicity and translation invariance of $\mathcal H$ imply $\mathcal Hu(\underline y)\leq\mathcal Hw^\epsilon(\underline y)+M^\epsilon$. Hence $M^\epsilon\leq M^\epsilon-\epsilon\tilde C$, contradiction. Thus $\tilde y>\underline y$. For $\alpha>0$, we maximize $\Phi_{\alpha}(y,z): = u(y)-w^{\epsilon}(z)-\frac{(y-z)^2}{2\alpha}$. At its maximizer $(y_{\alpha}, z_{\alpha})$, a standard argument gives $y_{\alpha}, z_{\alpha}\to \tilde y$ and $\frac{(y_{\alpha}-z_{\alpha})^2}{\alpha} \to 0$; thus, $y_{\alpha}$ and $z_{\alpha}$ are in the interior. Ishii's lemma gives $(q_{\alpha},X_{\alpha}) \in J^{2,+}u(y_{\alpha})$ and $(q_{\alpha},Y_{\alpha}) \in J^{2,-}w^{\epsilon}(z_{\alpha})$, where $q_{\alpha} = \frac{y_{\alpha}-z_{\alpha}}{\alpha}$, and
\[\begin{pmatrix}
    X_{\alpha}&0\\
    0& - Y_{\alpha}
\end{pmatrix} \leq \frac{1}{\alpha}\begin{pmatrix}
    1&-1\\
    -1& 1
\end{pmatrix} . \]
Since $w^\epsilon$ is a strict supersolution, $w^\epsilon(z_\alpha)-\mathcal Hw^\epsilon(z_\alpha)\geq\epsilon\widetilde C$ and $q_\alpha-1\geq\epsilon\widetilde C$. For the impulse branch, suppose that $u(y_\alpha)\leq\mathcal Hu(y_\alpha)$. For $\eta>0$, choose $r_\alpha>y_\alpha$ such that $u(y_\alpha)\leq
u(r_\alpha)-\frac{r_\alpha-y_\alpha}{1-\kappa'}-\frac{\kappa\kappa'}{1-\kappa'}(y_\alpha-1)+\eta$, and define $s_\alpha:=r_\alpha+z_\alpha-y_\alpha>z_\alpha$, then 
\[w^\epsilon(z_\alpha)\geq w^\epsilon(s_\alpha) -\frac{s_\alpha-z_\alpha}{1-\kappa'}-\frac{\kappa\kappa'}{1-\kappa'}(z_\alpha-1)+\epsilon\widetilde C.\]
Combining these two inequalities, we have \[
u(y_\alpha)-w^\epsilon(z_\alpha)\leq u(r_\alpha)-w^\epsilon(s_\alpha)
+\frac{\kappa\kappa'}{1-\kappa'}(z_\alpha-y_\alpha)+\eta-\epsilon\widetilde C,\]
and thus $u(r_\alpha)-w^\epsilon(s_\alpha)\leq
u(y_\alpha)-w^\epsilon(z_\alpha)$ by maximality of $\Phi_\alpha$. Thus $0\leq
\frac{\kappa\kappa'}{1-\kappa'}
|y_\alpha-z_\alpha|
+\eta-\epsilon\widetilde C$, first take $\eta<\frac{\epsilon\widetilde C}{4}$ and take $\alpha$ small enough that $\frac{\kappa\kappa'}{1-\kappa'}
|y_\alpha-z_\alpha|<\frac{\epsilon\widetilde C}{4}$, and this yields a contradiction. Hence $u(y_\alpha)-\mathcal Hu(y_\alpha)>0$. Since $q_{\alpha}-1\geq \epsilon \tilde C$, the differential operator binds for both functions. Let $\pi_\alpha$ maximize the Hamiltonian at $y_\alpha$ and set
$\widehat\pi_\alpha :=\min\{\pi_\alpha,\overline\pi(z_\alpha)\}$. Then $\widehat\pi_\alpha$ is admissible at $z_\alpha$. Since
$\overline\pi$, the drift and diffusion coefficients are locally
Lipschitz, define $b(y,\pi): = y(\mu(\pi)-\mu_L)+\gamma$, then
\[|\pi_\alpha-\widehat\pi_\alpha|+|b(y_\alpha,\pi_\alpha)-b(z_\alpha,\widehat\pi_\alpha)|+|\sigma_Y(y_\alpha,\pi_\alpha)-\sigma_Y(z_\alpha,\widehat\pi_\alpha)|\leq C|y_\alpha-z_\alpha|.
\]
Combining the subsolution and strict supersolution inequalities with Ishii's matrix inequality, we have $\rho_L(u(y_\alpha)-w^\epsilon(z_\alpha))\leq C\frac{|y_\alpha-z_\alpha|^2}{\alpha}-\epsilon\widetilde C$. Since $|y_\alpha-z_\alpha|^2/\alpha\to0$, taking $\alpha\downarrow0$ yields $\rho_LM^\epsilon\leq-\epsilon\widetilde C$, a contradiction. Hence $u\leq w^\epsilon$, and letting $\epsilon\downarrow 0 $ completes the proof.
\end{proof}

\subsection*{Proof of Proposition \ref{concavity}}
\begin{proof}
We show that $v$ is concave in three steps. First, define $v^0$ as the value function when capital issuance is not allowed; that is,
$v^0(y)=\sup_{{{\alpha}}\in\mathcal{A}}\mathbb{E}_{y}\Big[\int_0^{T^{{\alpha}}}e^{-\rho_L
t}d{Z}_t+e^{-\rho_L
T^{{\alpha}}}(\underline y-1) \Big]$. Here, the control set is
${\mathcal{A}}=\{{{\alpha}} :=
({Z},
\pi):\ \forall t\geq 0:\ 0\leq \pi_t\leq\overline{\pi}(Y_t)\}$.
Under such a control, the state process $Y$ satisfies 
$$dY^{\alpha}_t =\left( Y_t\left[\mu\pi_t +r(1-\pi_t)-\mu_L\right]+\gamma\right)dt +\sigma \pi_t Y_t dB_t +\sigma_L (1-Y_t)dW_t -d Z_t,$$
with $0\leq \pi_t\leq \overline \pi (Y_t) = \min\left(\frac{1}{a_1}(1-\frac{1}{Y_t});\ \frac{1}{a_3}(1-\frac{a_2}{Y_t})\right)$. Define the control process $U_t: = \pi_t Y_t$ for all $t\geq 0$.
Then the dynamics can be written as 
$$dY^{\alpha}_t =\left( (r-\mu_L)Y_t +(\mu-r) U_t +\gamma\right)dt +\sigma U_t dB_t +\sigma_L (1-Y_t)dW_t -d Z_t,$$
and the admissible control set becomes 
${\mathcal{A}}(y)=\{{{\alpha}} :=
({Z},U):\ \forall t\geq 0:\ 0\leq U_t\leq\overline{u}(Y_t), Y_0 = y\}$
with $\overline{u}(y) = \min\left(\frac{y-1}{a_1},\ \frac{y-a_2}{a_3}\right)$, which is concave in $y$. Therefore, the set $\{(y,U): y\geq \underline y, 0\leq U\leq \overline u(y)\}$ is convex, and the coefficients are affine in the state. Applying the DPP before absorption and restarting from the continuation value yields the concavity of $v^{0}$, as in \cite{chotakzho03}. Next, we show that the operator
$$\mathcal{H} \varphi (y)= \sup_{\xi>\frac{\kappa}{1-\kappa^\prime}(y-1)}\Big[ \varphi((1-\kappa)y+(1-\kappa^\prime)\xi+\kappa)-\xi+\kappa(y-1)\Big]$$
preserves concavity; that is, $\mathcal{H} \varphi $ is concave whenever $\varphi$ is concave. Let
$\xi_{\lambda}: = \lambda \xi_1+(1-\lambda)\xi_2$,
where $\xi_i>\frac{\kappa}{1-\kappa^\prime}(y_i-1)$, $i=1,2$. Then $\xi_{\lambda}$ is feasible for $y_{\lambda}$. Define $F(y,\xi) := (1-\kappa)y+(1-\kappa^\prime)\xi+\kappa$. Since $F$ is affine in $(y,\xi)$,
\begin{align*}
    \mathcal{H} \varphi (y_{\lambda})     &\geq \varphi(F(y_{\lambda},\xi_{\lambda}))-\xi_{\lambda}+\kappa(y_{\lambda}-1)\\
    & = \varphi(\lambda F(y_{1},\xi_{1})+(1-\lambda) F(y_{2},\xi_{2}))-\xi_{\lambda}+\kappa(y_{\lambda}-1)\\
    &\geq \lambda \varphi(F(y_{1},\xi_{1}))+(1-\lambda )\varphi(F(y_{2},\xi_{2}))-\xi_{\lambda}+\kappa(y_{\lambda}-1)\\
    & = \lambda \left(\varphi(F(y_{1},\xi_{1}))-\xi_{1}+\kappa(y_{1}-1)\right) +(1-\lambda )\left(\varphi(F(y_{2},\xi_{2}))-\xi_{2}+\kappa(y_{2}-1)\right).
\end{align*}
Taking the supremum over $\xi_1$ and $\xi_2$ on the right-hand side yields
$\mathcal{H} \varphi (y_{\lambda})   \geq \lambda\mathcal{H} \varphi (y_{1})+(1-\lambda )\mathcal{H} \varphi (y_{2})$.
In other words, $  \mathcal{H}$ preserves concavity. We now define a sequence of functions $\{v^{(n)}\}_{n\geq 0}: [\underline y,\infty) \to [0, \infty)$ by setting 
$v^{(0)} = v^0$, $ v^{(n+1)} = T (\mathcal{H}v^{(n)})$ for $ n\geq 0$,
where the continuation operator $T$ is defined by
\[\begin{aligned}
(Tf)(y):={}&\sup_{(Z,U,\tau)} \mathbb{E}_{y}\Bigg[
\int_0^{\tau\wedge T_{\underline y}} e^{-\rho_L t}dZ_t
+e^{-\rho_L \tau} f(Y_{\tau})\1_{\{\tau< T_{\underline y}\}} \\
&\qquad\qquad+e^{-\rho_L T_{\underline y}}
\max(\underline y-1, f(\underline y))
\1_{\{\tau \geq T_{\underline y}\}}\Bigg].
\end{aligned}
\]
The same argument used for $v^0$ shows that $T$ preserves concavity, and both $T$ and $\mathcal H$ are monotone. By induction, each $v^{(n)}$ is concave and $v^{(n)}\leq v^{(n+1)}\leq v$. 

Fix $\epsilon>0$ and let $\alpha$ be an $\epsilon-$optimal admissible strategy. Let $\alpha^{(n)}$ agree with $\alpha$ in the first $n$th and after that we do not issue. Since $y-1\leq v^0(y)\leq v(y)\leq y+K$, the loss from using this suboptimal strategy is bounded by $(K+1)\E[e^{-\rho_L\tau_n}1_{\tau_n<T^{\alpha}}]$. Since $\tau_n\uparrow \infty$ and $\rho_L>0$, this bound goes to $0$ by dominated convergence. Thus, $v(y)-\epsilon\leq \liminf_nv^{(n)}(y)$. Letting $\epsilon\downarrow 0$ gives $v=\sup_{n\geq 0} v^{(n)}$. Therefore, $v^{(n)}\uparrow v$, and the pointwise limit $v$ is concave. 
\end{proof}

\subsection*{Proof of Corollary \ref{regularity}}
\begin{proof}
    Since $v$ is concave on $[\underline y,\infty)$, its right and left derivatives exist for every $y>\underline y$. Define
$$D^+ v(y): =\lim_{h\downarrow 0} \frac{v(y+h)-v(y)}{h},\quad D^- v(y): =\lim_{h\uparrow 0} \frac{v(y+h)-v(y)}{h}. $$
These derivatives are finite and non-increasing in $y$. An immediate dividend payment of size $\epsilon$ yields $v(y+\epsilon)\geq v(y)+\epsilon$, and thus $D^+ v\geq 1$.
The continuation set can be written as $\Cc= \{y>\underline y:  v(y)>\mathcal{H}v(y), \enskip D^+v(y)>1\}$.
Recall that $D^+$ is right-continuous and $D^-$ is left-continuous. The linear upper bound on the value function implies that $y\mapsto \mathcal{H} v(y)$ is upper semicontinuous and that the set $\mathcal O: =\{y>\underline y: v(y)>\mathcal{H}v(y)\}$ is open. Suppose that $v$ is not differentiable at some $y_0\in \mathcal O$. Then $D^- v(y_0)>D^+ v(y_0)$. Take $k\in (D^+ v(y_0), D^- v(y_0))$ and define a test function $\tilde v (y) = v(y_0)+k(y-y_0)-N(y-y_0)^2$ that touches $v$ from above for some large $N>0$. Then $\rho_Lv - \sup_{\pi}\mathcal{L}^{\pi }\tilde v\leq 0$. However, $\rho_Lv - \sup_{\pi}\mathcal{L}^{\pi }\tilde v\to \infty$ as $N\to \infty$, a contradiction. Thus, $v\in C^1(\mathcal{O})$ and $D^+v = v'$ on $\mathcal{O}$, which implies that $\Cc$ is open. On $\Cc$, the variational inequality reduces to the HJB equation $\rho_L v = \sup_{\pi} \Lc^{\pi} v$. Since this equation is locally uniformly elliptic on $\Cc$, the value function satisfies $v\in C^2(\Cc)$, and the HJB equation holds there in the classical sense by the arguments in \cite{soner}. We next turn to the issuance set $\Kc$.
Fix $y\in\Dc$. Since $D^+v(y) = 1$, we have $v(z)\leq v(y)+(z-y)$ for all $z>y$. Let $z = (1-\kappa) y+(1-\kappa')\xi +\kappa$. Then
$v(z)-\xi+\kappa(y-1)\leq v(y) - \kappa' \xi$.
Taking the supremum over admissible $\xi$ on both sides yields
$$\mathcal{H}v(y)\leq v(y) - \kappa' \inf_{\xi>\frac{\kappa}{1-\kappa'} (y-1)}\xi = v(y) - \frac{\kappa \kappa'}{1-\kappa'}(y-1)<v(y)$$
since $\kappa, \kappa'>0$ and $y>\underline y$ on $\Dc$. This implies that $\Kc \cap \Dc = \emptyset$, and thus every interior point of $\Dc$ belongs to $\mathcal{O}$ and $v$ is affine there. Therefore, $v \in C^1(\Cc\cup {\rm int}(\Dc) )$. Since $Y$ is non-degenerate at $y^*>\underline y>1$, $v$ is $C^1$ across the dividend boundary.
\end{proof}

\subsection*{Proof of Theorem \ref{optimal_strategy}}
\begin{proof}
\textbf{(i).} By concavity, $D^+v$ is non-increasing and $D^+v\geq1$. The bound $y-1\leq v(y)\leq y+K$ implies $D^+v(y)\downarrow1$ as $y\to\infty$. If $D^+v(y)>1$ for all $y$, then for all sufficiently large $y$ we have $v(y)>\mathcal Hv(y)$ and the HJB equation holds. Using $v''\leq0$ and letting $y\to\infty$ gives $\rho\leq\sup_{y\geq\underline y}\mu^*(y)$, contradicting Assumption \ref{assump1}. Hence $\Dc\neq\emptyset$, and concavity implies $\Dc=[y^*,\infty)$ for some finite $y^*$. Moreover, smooth fit holds at $y = y^*$. Consequently, $v$ is affine with slope $1$ on the entire dividend region:
\[\rho_Lv(y)-\sup_{0\leq \pi\leq \overline{\pi}(y)}\Lc^\pi v(y)
=(\rho-\mu^*(y))y+\rho_L(v(y^*)-y^*)-\gamma\geq 0.\]
Letting $y\downarrow y^*$ gives $\rho_Lv(y^*)\geq \gamma +(\mu^*(y^*)-\mu_L)y^*$. Conversely, as shown in $(ii)$ below, issuance is not active in a neighborhood to the left of $y^*$. Taking $y_n\uparrow y^*$ with $y_n\in \Cc$, concavity gives $\rho_L v(y_n)\leq v'(y_n)(\gamma +(\mu^*(y_n)-\mu_L)y_n)$. Letting $n\to\infty$ and using smooth fit yields the other inequality, and thus $\rho_Lv(y^*)= \gamma +(\mu^*(y^*)-\mu_L)y^*$.\par 
\textbf{(ii).} 
We first show that issuance cannot occur in the interior $(\underline y, \infty)$. Fix $y\in\lbrack \underline y,y^*)$ such that $1<D^+ v(y)\leq \frac{1}{1-\kappa^\prime}$. For every $z>y$, concavity gives $v(z)\le v(y)+D^+v(y)(z-y)$. Therefore, $\mathcal Hv(y)
\leq v(y)-\frac{\kappa\kappa'}{1-\kappa'}(y-1)<v(y)$. Hence, such a point cannot belong to $\Kc$. Consequently, $\Kc\cap(\underline y,\infty)
\subset
\{y>\underline y:\ D^+v(y)>\frac{1}{1-\kappa'}\}$. 
Now suppose that $\Kc\cap(\underline y,\infty)\not=\emptyset$. Let $y_K\in(\underline y,y^*]\cap\Kc$. There exists $\xi_K\in(\frac{\kappa}{1-\kappa^\prime}(y_K-1),\infty)$ such that
$v\left(\overline{y}_K\right)-\xi_K+\kappa(y_K-1)=v(y_K)$,
where $\overline{y}_K=(1-\kappa)y_K+(1-\kappa^\prime)\xi_K+\kappa$. Fix $y\in(\underline y,y_K)$, and choose $\xi=\frac{1}{1-\kappa^\prime}\left(\overline{y}_K-(1-\kappa)y-\kappa\right).$ With this choice of $\xi$, we obtain
\beqs
\Hc v(y) & \geq & v\Big[(1-\kappa)y+(1-\kappa^\prime)\xi+\kappa\Big]-\xi+\kappa(y-1)\\
 & = & v(y_K)-\frac{1-\kappa\kappa^\prime}{1-\kappa^\prime}(y_K-y).
\enqs
On the other hand, the concavity of $v$ implies that
$$v(y)\leq v(y_K)-(y_K-y)D^- v(y_K)<v(y_K)-\frac{1-\kappa\kappa^\prime}{1-\kappa^\prime}(y_K-y),$$
since $D^- v(y_K)\geq D^+ v(y_K)>\frac{1}{1-\kappa^\prime}$.
This is a contradiction, since we have shown $v(y)\geq\Hc v(y)> v(y)$. Hence, if $\Kc\neq \emptyset$, then $\Kc=\{ \underline y\}$. Define $\psi(z) = v(z) - \frac{z-\underline y}{1-\kappa'} -\frac{\kappa\kappa'}{1-\kappa'}(\underline y-1)$. Since $v$ has linear growth, $\lim_{z\to \infty}\psi(z) = -\infty$, and $\lim_{z\downarrow \underline y}\psi(z)\leq v(\underline y)$. Therefore, $\psi$ attains its maximum at some interior point $y^*_{post} >\underline y$. The first-order condition gives $v'(y^*_{post}) = \frac{1}{1-\kappa'}$. Finally, value matching gives $v(\underline y) = \mathcal{H}v(\underline y) = v({y_{post}^*})-\frac{y_{post}^*-\underline y}{1-\kappa^\prime} - \frac{\kappa\kappa'}{1-\kappa'}(\underline y-1)$.
\end{proof}
\bibliographystyle{plain}
\bibliography{references}

@techreport{baco13,
  author      = {{Basel Committee on Banking Supervision}},
  title       = {{Basel III}: {The Liquidity Coverage Ratio and liquidity risk monitoring tools}},
  institution = {Bank for International Settlements},
  number      = {BCBS 238},
  year        = {2013},
  month       = jan,
  url         = {https://www.bis.org/publ/bcbs238.pdf}
}

@article{Alvarez2008,
  author = {Alvarez, Luis H. R.},
  title = {A Class of Singular Stochastic Control Problems with Optimal Stopping and Impulse Control},
  journal = {Stochastic Processes and their Applications},
  volume = {118},
  number = {8},
  pages = {1423--1452},
  year = {2008},
  doi = {10.1016/j.spa.2007.10.004},
}

@article{chotakzho03,
  author  = {Choulli, Tahir and Taksar, Michael and Zhou, Xun-Yu},
  title   = {A diffusion model for optimal dividend distribution for a company with constraints on risk control},
  journal = {SIAM Journal on Control and Optimization},
  volume  = {41},
  number  = {6},
  pages   = {1946--1979},
  year    = {2003},
  doi     = {10.1137/S0363012900382667}
}

@article{decvil05,
  author  = {D{\'e}camps, Jean-Paul and Villeneuve, St{\'e}phane},
  title   = {Optimal dividend policy and growth option},
  journal = {Finance and Stochastics},
  volume  = {11},
  number  = {1},
  pages   = {3--27},
  year    = {2007},
  doi     = {10.1007/s00780-006-0027-z}
}

@article{guotom08,
  author  = {Guo, Xin and Tomecek, Pascal},
  title   = {Connections between singular control and optimal switching},
  journal = {SIAM Journal on Control and Optimization},
  volume  = {47},
  number  = {1},
  pages   = {421--443},
  year    = {2008},
  doi     = {10.1137/060669024}
}

@article{jeashi95,
  author  = {Jeanblanc-Picqu{\'e}, Monique and Shiryaev, Albert N.},
  title   = {Optimization of the flow of dividends},
  journal = {Russian Mathematical Surveys},
  volume  = {50},
  number  = {2},
  pages   = {257--277},
  year    = {1995},
  doi     = {10.1070/RM1995v050n02ABEH002054}
}

@article{lyphvi07,
  author  = {Ly Vath, Vathana and Pham, Huy{\^e}n and Villeneuve, St{\'e}phane},
  title   = {A mixed singular/switching control problem for a dividend policy with reversible technology investment},
  journal = {The Annals of Applied Probability},
  volume  = {18},
  number  = {3},
  pages   = {1164--1200},
  year    = {2008},
  doi     = {10.1214/07-AAP482}
}

@article{Peukep06,
  author  = {Peura, Samu and Keppo, Jussi},
  title   = {Optimal Bank Capital with Costly Recapitalization},
  journal = {The Journal of Business},
  volume  = {79},
  number  = {4},
  pages   = {2163--2202},
  year    = {2006},
  month   = jul,
  doi     = {10.1086/503660}
}

@article{soner,
  author  = {Shreve, Steven E. and Soner, H. Mete},
  title   = {Optimal investment and consumption with transaction costs},
  journal = {The Annals of Applied Probability},
  volume  = {4},
  number  = {3},
  pages   = {609--692},
  year    = {1994},
  doi     = {10.1214/aoap/1177004966}
}

@article{Seydel,
  author  = {Seydel, Roland C.},
  title   = {Existence and uniqueness of viscosity solutions for {QVI} associated with impulse control of jump-diffusions},
  journal = {Stochastic Processes and their Applications},
  volume  = {119},
  number  = {10},
  pages   = {3719--3748},
  year    = {2009},
  doi     = {10.1016/j.spa.2009.07.004}
}

@article{soto11,
  author  = {Sotomayor, Luis R. and Cadenillas, Abel},
  title   = {Classical, singular, and impulse stochastic control for the optimal dividend policy when there is regime switching},
  journal = {Insurance: Mathematics and Economics},
  volume  = {48},
  number  = {3},
  pages   = {344--354},
  year    = {2011},
  doi     = {10.1016/j.insmatheco.2011.01.002}
}

@article{HM,
  title={Bank capital, liquid reserves, and insolvency risk},
  author={Hugonnier, Julien and Morellec, Erwan},
  journal={Journal of Financial Economics},
  volume={125},
  number={2},
  pages={266--285},
  year={2017},
  publisher={Elsevier}
}

@article{bolton,
  title={Dynamic banking and the value of deposits},
  author={Bolton, Patrick and Li, Ye and Wang, Neng and Yang, Jinqiang},
  journal={The Journal of Finance},
  volume={80},
  number={4},
  pages={2063--2105},
  year={2025},
  doi={10.1111/jofi.13454},
  publisher={Wiley Online Library}
}

@misc{boltonIA,
  title={Internet Appendix to ``Dynamic banking and the value of deposits''},
  author={Bolton, Patrick and Li, Ye and Wang, Neng and Yang, Jinqiang},
  year={2025},
  url={https://yeli-macrofinance.com/dynamic_bank.pdf},
  note={Internet Appendix}
}

@article{bcw2011,
  title={A unified theory of {Tobin's q}, corporate investment, financing, and risk management},
  author={Bolton, Patrick and Chen, Hui and Wang, Neng},
  journal={The Journal of Finance},
  volume={66},
  number={5},
  pages={1545--1578},
  year={2011},
  doi={10.1111/j.1540-6261.2011.01681.x},
  publisher={Wiley Online Library}
}

@book{morenoRochet2018,
  title={Continuous-Time Models in Corporate Finance, Banking, and Insurance: A User's Guide},
  author={Moreno-Bromberg, Santiago and Rochet, Jean-Charles},
  publisher={Princeton University Press},
  address={Princeton and Oxford},
  year={2018},
  doi={10.1515/9781400889204},
  isbn={9780691176529}
}

@article{diamond,
  title={Bank runs, deposit insurance, and liquidity},
  author={Diamond, Douglas W and Dybvig, Philip H},
  journal={Journal of political economy},
  volume={91},
  number={3},
  pages={401--419},
  year={1983},
  publisher={The University of Chicago Press}
}

@techreport{cetina,
  title={The difficult business of measuring banks’ liquidity: Understanding the liquidity coverage ratio},
  author={Cetina, Jill and Gleason, Katherine},
  institution={Office of Financial Research},
  type={Working Paper},
  number={15-20},
  year={2015}
}

@article{bala,
  title={Bank balance sheet dynamics under a regulatory liquidity-coverage-ratio constraint},
  author={Balasubramanyan, Lakshmi and VanHoose, David D},
  journal={Journal of Macroeconomics},
  volume={37},
  pages={53--67},
  year={2013},
  publisher={Elsevier}
}

@article{doer,
  title={The liquidity coverage ratio a decade on: a stocktake of the literature},
  author={Doerr, Sebastian and Drehmann, Mathias},
  journal={Available at SSRN 5758225},
  year={2025}
}

@article{harrison1,
  title={Instantaneous control of Brownian motion},
  author={Harrison, J Michael and Taksar, Michael I},
  journal={Mathematics of Operations research},
  volume={8},
  number={3},
  pages={439--453},
  year={1983},
  publisher={INFORMS}
}

@article{harrison2,
  title={Impulse control of Brownian motion},
  author={Harrison, J Michael and Sellke, Thomas M and Taylor, Allison J},
  journal={Mathematics of Operations Research},
  volume={8},
  number={3},
  pages={454--466},
  year={1983},
  publisher={INFORMS}
}

@article{orm,
  title={Impulse control of Brownian motion: The constrained average cost case},
  author={Ormeci, Melda and Dai, Jim G and Vate, John Vande},
  journal={Operations Research},
  volume={56},
  number={3},
  pages={618--629},
  year={2008},
  publisher={INFORMS}
}

@article{decamps2,
  title={Free cash flow, issuance costs, and stock prices},
  author={D{\'e}camps, Jean-Paul and Mariotti, Thomas and Rochet, Jean-Charles and Villeneuve, St{\'e}phane},
  journal={The Journal of Finance},
  volume={66},
  number={5},
  pages={1501--1544},
  year={2011},
  publisher={Wiley Online Library}
}

@article{rochet,
  title={Capital requirements and the behaviour of commercial banks},
  author={Rochet, Jean-Charles},
  journal={European economic review},
  volume={36},
  number={5},
  pages={1137--1170},
  year={1992},
  publisher={Elsevier}
}

@misc{acquiredDimon2025,
  author       = {Gilbert, Ben and Rosenthal, David},
  title        = {The {Jamie Dimon} Interview},
  year         = {2025},
  month        = jul,
  howpublished = {Acquired podcast,
                  \url{https://www.acquired.fm/episodes/the-jamie-dimon-interview}},
  note         = {Interview with Jamie Dimon, especially 44:46--46:16;
                  accessed August 14, 2026}
}

@misc{dimon2017,
  author       = {Dimon, Jamie},
  title        = {Letter to Shareholders, 2016 Annual Report},
  year         = {2017},
  month        = apr,
  organization = {JPMorgan Chase \& Co.},
  howpublished = {\url{https://reports.jpmorganchase.com/investor-relations/2016/ar-ceo-letters.htm}},
  note         = {Dated April 4, 2017; accessed August 14, 2026}
}

@misc{dimon2026,
  author       = {Dimon, Jamie},
  title        = {Chairman and {CEO} Letter to Shareholders,
                  2025 Annual Report},
  year         = {2026},
  month        = apr,
  organization = {JPMorgan Chase \& Co.},
  howpublished = {\url{https://www.jpmorganchase.com/ir/annual-report/2025/ar-ceo-letters}},
  note         = {Dated April 6, 2026; accessed August 14, 2026}
}

@article{LokkaZervos2008,
  author  = {L{\o}kka, Arne and Zervos, Mihail},
  title   = {Optimal dividend and issuance of equity policies in the
             presence of proportional costs},
  journal = {Insurance: Mathematics and Economics},
  volume  = {42},
  number  = {3},
  pages   = {954--961},
  year    = {2008},
  doi     = {10.1016/j.insmatheco.2007.10.013}
}
\end{document}